\newcommand{\goto}{\rightarrow}
\newcommand\peel{\texttt{peeling}\xspace}
\newcommand\iid{i.\,i.\,d.\xspace}
\newcommand{\R}{\mathbb{R}}
\newcommand{\ep}[1]{\ifthenelse{\equal{#1}{1}}{\mathrm{e}}{\mathrm{e}^{#1}}}
\newcommand{\PBH}{PrivateBHq}
\newcommand{\RNM}{Report Noisy Min}
\let\P\prob
\def\lap{\operatorname{Lap}}
\def\var{\operatorname{Var}}
\def\argmin{\operatorname{argmin}}
\def\range{\operatorname{range}}
\def\fdr{\textnormal{FDR}}
\def\fdp{\textnormal{FDP}}
\let\e\varepsilon
\renewcommand\epsilon{\varepsilon}
\def\d{\mathrm{d}}
\newcommand{\remove}[1]{}
\def\RNMa{Report Noisy Max\xspace}
\def\RNMi{Report Noisy Min\xspace}
\def\BH{BHq\xspace}
\def\BHq{BHq\xspace}
\def\dba{D}
\def\dbb{D'}
\def\eps{\varepsilon}
\newcommand{\E}{\operatorname{\mathbb{E}}}
\newcommand\dbax{D}
\newcommand\dbby{D'}
\newtheorem{theorem}{Theorem}
\newtheorem{othertheorem}{othertheorem}[section]
\newtheorem{lemma}[othertheorem]{Lemma}
\newtheorem{corollary}[othertheorem]{Corollary}
\newtheorem{proposition}[othertheorem]{Proposition}
\theoremstyle{definition}
\newtheorem{definition}[othertheorem]{Definition}
\theoremstyle{remark}
\theoremstyle{assumption}
\theoremstyle{definition}
\newtheorem{example}[othertheorem]{Example}
\let\truenull\pi
\def\pi{\theta}
\numberwithin{equation}{section}
\title{Differentially Private False Discovery Rate Control}
\author{Cynthia Dwork \and Weijie J.~Su \and  Li Zhang}
\date{}
\begin{document}
\maketitle

{\centering
\vspace*{-0.5cm}
Harvard University, University of Pennsylvania, and Google Research
\par\bigskip
\date{}\par
%\date{August 17, 2020}\par
}

\begin{abstract}
Differential privacy provides a rigorous framework for privacy-preserving data analysis. This paper proposes the first differentially private procedure for controlling the false discovery rate (FDR) in multiple hypothesis testing. Inspired by the Benjamini-Hochberg procedure (BHq), our approach is to first repeatedly add noise to the logarithms of the $p$-values to ensure differential privacy and to select an approximately smallest $p$-value serving as a promising candidate at each iteration; the selected $p$-values are further supplied to the BHq and our private procedure releases only the rejected ones. Moreover, we develop a new technique that is based on a backward submartingale for proving FDR control of a broad class of multiple testing procedures, including our private procedure, and both the BHq step-up and step-down procedures. As a novel aspect, the proof works for arbitrary dependence between the true null and false null test statistics, while FDR control is maintained up to
a small multiplicative factor. 

%This theoretical guarantee is the first in the FDR literature to explain the empirical validity of the BHq procedure in three simulation studies.
\end{abstract}

% \begin{keyword}[class=MSC]
% \kwd[Primary ]{}
% %%\kwd{60K35}
% %% http://www.ams.org/mathscinet/msc/msc2010.html?t=62-XX&s=&btn=Search&ls=s
% \kwd[; secondary ]{}
% \kwd{}.
% \end{keyword}

% \begin{keyword}
% \kwd{Differential privacy, Report Noisy Max, false discovery rate, Benjamini--Hochberg procedure, positive regression dependence on subset, submartingale}
% \end{keyword}

\section{Introduction}
\label{sec:intro}

With the growing availability of large-scale datasets, decision-making in healthcare, information technology, and government agencies is increasingly driven by data analyses. This data-driven paradigm, however, comes with great risk if the databases contain sensitive information of individuals such as health records or financial data. Without appropriate adjustments, statistical analysis applied to these databases can lead to privacy violation. For example, Homer et al.~demonstrate that, under certain conditions, it is possible to determine whether an individual with a known genotype is in a genome-wide association study (GWAS) even when only minor allele frequencies are revealed \cite{homer2008resolving}. Such privacy issues have serious implications: at best, individuals and agencies are discouraged from sharing their data for research purposes due to the concern of privacy leakage, impeding scientific progress \cite{kaye2012tension}; at worst, potential adversaries could make use of sensitive information to jeopardize the social foundations of liberal democracy \cite{facebook}.

Being able to conduct data analysis in a way that preserves privacy, therefore, is key to removing barriers to scientific research while preventing breaches of personal data. First introduced by Dwork et al.~\cite{DworkMNS06}, \textit{differential privacy} (Definition \ref{def:dp}) has put private data analysis on a rigorous foundation. A differentially private algorithm is required to hide the presence or absence of any individual or small group of individuals, the intuition being that an adversary unable to tell whether or not a given individual is even a member of the database surely cannot glean information specific to this individual. In computer science, considerable efforts have been made to develop private data release mechanisms \cite{DworkMNS06,mcsherry2007mechanism,barak2007privacy} and private machine learning algorithms under
differential privacy constraints, for example, boosting \cite{hardt2012simple}, empirical risk minimization \cite{chaudhuri2011differentially}, private PAC learning \cite{beimel2010bounds}, and deep learning~\cite{abadi2016deep,bu2020deep}. On the statistical front, differential privacy has been added to and incorporated into many statistical methods in areas of robust statistics \cite{dwork2009differential}, nonparametric density estimation \cite{wasserman2010}, hypothesis testing \cite{uhlerop2013privacy,gaboardi2016differentially}, finite-sample confidence intervals \cite{karwa2017confidence}, functional data analysis \cite{hall2013differential}, network data analysis \cite{karwa2016inference}, and linear regression \cite{lei2016differentially,wang2018revisiting}.

%cd: added reference to Karwa and Vadhan on finite sample estimation
%cd: weird problem with LaTex, magically disappeared when I added the newpage command
%\newpage
%% my complier works without \newpage
%%We use {\em differential privacy} to measure privacy loss of our multiple testing procedure (Definition \ref{def:dp}). 
%In addition to being the first to propose a private multiple testing procedure, another 
%cd: added:
%% Our new proof of FDR control for known non-private methods has no impact on their power.  

In this paper, we provide the first differentially private multiple testing procedure. The problem of multiple testing arises in many privacy-sensitive applications such as a GWAS, where a large number of single-nucleotide polymorphisms (SNPs) are tested simultaneously for an association with a disease and the hope is to control some error rate for the significant SNPs. Perhaps the most popular error rate is the \textit{false discovery rate} (FDR), which, roughly speaking, is the expected fraction of erroneously rejected hypotheses among all rejected hypotheses. This notion of type I error rate was introduced in the seminal work of Benjamini and Hochberg \cite{BenjaminiH95}, along with the Benjamini--Hochberg procedure (BHq) that controls the FDR under certain conditions. This procedure is detailed in Algorithm~\ref{algo:originalbh}. 

Our interest in privacy-preserving FDR control arose as a group of researchers showed how to use one-way marginals, specifically, allele frequency
statistics, together with the DNA of a target individual and allele frequency statistics for the general population, to determine the target's presence or absence in the study~\cite{homer2008resolving}. In response, the US National Institutes of Health and the Wellcome Trust changed the access policy to statistics of this type in the studies they fund. Although differential privacy has been shown to permit nontrivial estimates of very large numbers of statistical queries~\cite{smallDB,hardt2010multiplicative}, the errors introduced in these techniques are -- and {\em must be}~\cite{BunUV18,dwork2015robust} -- too large for the (typical) setting, where the number of alleles exceeds the square of the number of data subjects. 

Our procedure, which is referred to as PrivateBHq henceforth (Algorithm \ref{algo:pbh2}), is derived by recognizing the iterative nature of the BHq procedure and making each iteration differentially private. PrivateBHq provides \textit{unconditional} end-to-end privacy. For now, regarding $p$-values as functions of a dataset, our proof of the privacy guarantees of PrivateBHq relies on a new definition of sensitivity that is tailor-made for $p$-values (Definition~\ref{def:multiplicative}). Loosely speaking, this definition evaluates how \textit{insensitive} $p$-values are to perturbations of any individual record in the database. All computations satisfy the definition for {\em some} choice of the privacy parameters, but not all choices of these parameters yield useful results when we enforce privacy. Popular examples of $p$-values are described in terms of these privacy parameters in Section~\ref{sec:privfdr}.

% Formally, the FDR is defined as
% \[
% \fdr := \E \left[ \frac{V}{\max\{R, 1\}} \right],
% \]
% where $R$ denotes the number of rejections (discoveries) made by a procedure and $V$ the number of true null hypotheses that are falsely rejected (false discoveries).
%%A second major contribution of the paper is to prove FDR control of this procedure and beyond. 

Another contribution of this paper lies in our proof of the FDR control of PrivateBHq and beyond. In short, all existing proof strategies for FDR control are invalid for PrivateBHq. Thus, a new technique for proving FDR control is needed. To this end, we
\begin{enumerate}
\item Develop a novel proof of FDR control for a class of multiple testing procedures, including the original (non-private) BHq and many of its variants -- a proof requiring different assumptions than those found in the vast literature on this topic (see Section~\ref{sec:robust}) -- and
\item
Relate the FDR control and power properties of PrivateBHq to the corresponding properties of the non-private version. %Power is also argued in relation to the non-private version.
\end{enumerate}

% Our interest in privacy-preserving FDR control arose in the context of genome-wide association studies, when a group of researchers showed how to use 1-way marginals, specifically, allele frequency statistics, together with the DNA of a target individual and allele frequency statistics for the general population, to determine the target's presence or absence in the study~\cite{homer2008resolving}.  In response, the US National Institutes of Health and the Wellcome Trust changed the access policy to statistics of this type in the studies they fund.  Section~\ref{sec:tracing} surveys results in tracing.  Although differential privacy permits nontrivial estimates of very large numbers of statistical queries~\cite{smallDB,PMW}, the errors introduced in these techniques are -- and {\em must be}~\cite{BunSteinkeVadhan,DworkSmithSteinkeUllmanVadhan} -- too large for the (typical) setting, where the number of alleles exceeds the square of the number of data subjects.  

The outline of the remainder of the paper is as follows. The next two subsections elucidate the two contributions, namely developing PrivateBHq and proving FDR control for a class of procedures, and the following subsection consolidates privacy and inferential properties together for PrivateBHq. To make this paper self-contained, in Section \ref{sec:privfdr} we give a brief introduction to differential privacy, followed by the complete development of the PrivateBHq procedure. Section \ref{sec:robust} is devoted to establishing FDR control of a broad class of multiple testing procedures and, as an application, Section \ref{sec:appl-priv} proves FDR control of PrivateBHq and argues its power as well. The paper is concluded by a discussion in Section \ref{sec:discussion}.

% Next, we present three simulation studies in Section \ref{sec:numerical-results} to show the effectiveness of our new FDR control result applied to the BHq.

\begin{algorithm}[ht]
\caption{BHq (Step-Up) Procedure}\label{algo:originalbh}
\begin{algorithmic}[1]
\REQUIRE nominal level $0 < q < 1$ and $p$-values $p_1, \ldots, p_m$
\ENSURE a set of rejected hypotheses
\STATE sort the $p$-values in increasing order: $p_{(1)}\leq p_{(2)}\leq \cdots\leq p_{(m)}$
\FOR{$j=m$ to $1$}
\IF {$p_{(j)} >  qj/m$}
\STATE continue
\ELSE
\STATE reject $p_{(1)}, \ldots, p_{(j)}$ and halt
\ENDIF
\ENDFOR\\
\it In words, the BHq (step-up) procedure finds the largest $j^\star$ such that $p_{(j^\star)} \le q j^\star/m$ and rejects all $p$-values below $q j^\star/m$.
\end{algorithmic}
\end{algorithm}

\subsection{Making BHq private}
\label{sec:making-bhq-private}

The original BHq is our starting point in developing the PrivateBHq procedure. The original procedure is non-private because the data of a single individual can affect the $p$-values of all hypotheses simultaneously, possibly changing the outcome of the BHq procedure dramatically. 

To make the BHq private, for now we need two facts about differential
privacy: (1) differential privacy is closed under composition,
permitting us to bound the cumulative privacy loss over multiple
differentially private computations.  This allows us to build complex
differentially private algorithms from simple differentially private
primitives, and (2) we will make use of the well-known \RNMa{}
(respectively, \RNMi ) primitive \cite{DworkR14}, in which
appropriately distributed fresh random noise is added to the result of
each computation, and the index of the computation yielding the
maximum (respectively, minimum) noisy value is returned. By returning
only one index the procedure allows us to pay an accuracy price for a
single computation rather than all computations.

%\footnote{The variance of the noise distribution depends on the maximum amount that any single datum can (additively) change the outcome of a computation and the inverse of the privacy price one is willing to pay.}.

A natural approach to obtaining a private version of BHq is by repeated use of \RNMa{}: Starting with $j=m$ and decreasing: use \RNMa{} to find the (approximately) largest $p$-value; estimate that $p$-value and, if the estimate is above a certain more conservative critical value than $qj/m$, accept the corresponding null hypothesis, remove it from consideration, and repeat. Once a
hypothesis is found with its $p$-value below the threshold, reject all the remaining hypotheses. The principal difficulty with this approach is that every iteration of the algorithm incurs a privacy loss, which can be mitigated only by increasing the magnitude of the noise used by \RNMa{}. Since each iteration corresponds to the failure of rejecting a null hypothesis, this step-up procedure is paying in privacy precisely for all null hypotheses accepted, which are by definition not the ``interesting'' ones. Moreover, recognizing that most null hypotheses in a typical GWAS would be accepted, it is fundamentally difficult to preserve information content while protecting individual privacy by emulating the step-up procedure.

Instead of starting with the largest $p$-value and considering the values in decreasing order, another approach is to start with the smallest $p$-value and consider the values in increasing order, rejecting hypotheses one by one until we find a $p$-value above some threshold. This widely studied variant is called the BHq step-down procedure, which, in contrast to the aforementioned BHq step-up procedure, finds the largest $j$ such that $p_{(i)} \le qi/m$ for all $i \le j$ and then rejects $p_{(1)}, \ldots, p_{(j)}$. Their definitions reveal that the step-down procedure shall be more conservative than its step-up counterpart. This variant, however, can assume less stringent critical values than the BHq critical values while still offering FDR control, often allowing more discoveries than the step-up counterpart \cite{stepdown}. 

% Moreover, \cite{abdj} demonstrates the two procedures are asymptotically equivalent for Gaussian normal means estimation over a range of sparse signals.

% \begin{algorithm}[!htp]
% \caption{BHq Step-Down Procedure}\label{algo:smallbh}
% \begin{algorithmic}[1]
% \REQUIRE nominal level $0<q<1$ and $p$-values $p_1, \ldots, p_m$
% \ENSURE a set of rejected hypotheses
% \STATE sort the $p$-values in increasing order: $p_{(1)}\leq p_{(2)} \le \cdots \leq p_{(m)}$
% \FOR{$j=1$ to $m$}
% \IF {$p_{(j)} \le qj/m$}
%   \STATE reject $p_{(j)}$
% \ELSE
%   \STATE halt
% \ENDIF
% \ENDFOR
% \end{algorithmic}
% \end{algorithm}

If we make the natural modifications to the step-down procedure using \RNMi, also known as the \textit{Private Min} (Algorithm \ref{algo:rnm}),  instead of \RNMa, then we pay a privacy cost only for nulls rejected in favor of the corresponding alternative hypotheses, which by definition are the ``interesting'' ones.  Since the driving application of \BHq{} is to select promising directions for future investigation that have a decent chance of panning out, we can view its outcome as advice for allocating resources. Thus, a procedure that finds a relatively small number of high-quality hypotheses, still achieving FDR control, may be as useful as a procedure that finds a much larger set.

\subsection{A new technique for proving FDR control}
\label{sec:proving-fdr-control}

While various techniques have been developed in the literature for proving FDR control, they are not applicable to privacy-preserving procedures. Any privacy-preserving procedure is necessarily randomized. Consequently, the $j$th most significant noisy $p$-value may not necessarily correspond to the $j$th most significant true $p$-value. Even worse, PrivateBHq may compare a noisy $p$-value to a critical value with a different rank and, as an inevitable result, a larger $p$-value may be rejected while a smaller $p$-value is accepted. This is in stark contrast to the (non-private) BHq and most of its variants, which reject $p$-values that are contiguous in sorted order.

These facts about the PrivateBHq procedure destroy some crucial properties for proving FDR control in existing approaches. For example, it is not clear how to adapt the elegant martingale technique for FDR control, proposed by Storey, Taylor, and Siegmund \cite{storey2004strong}. In essence, this approach is to construct an empirical process indexed by a threshold under which a $p$-value is rejected. In the case of PrivateBHq, unfortunately, no such threshold exists for singling out $p$-values for declaring significance. 
% (end of part I cannot understand)
Another technique that appears frequently in the FDR control literature (see, for example, \cite{prds,sarkar2008methods,finner2009false,
  roquain2011exact,bogdan2015slope,heesen2015}) is based on a crucial property of BHq: provided that a $p$-value is rejected, the effective threshold for declaring significance is completely determined by the remaining $p$-values. Unfortunately, this property is not satisfied by PrivateBHq either.

To pursue a new strategy for PrivateBHq, we observe that, although PrivateBHq might skip some of the minimum $p$-values, nevertheless it preserves a key property with high probability: if $R$ rejections are made, the largest rejected $p$-value is roughly upper bounded by $qR/m$. This motivates us to give the following definition.
\begin{definition}\label{def:compliant}
Given any cutoffs $0 < q_1 \le q_2 \le \cdots \le q_m$, a multiple testing procedure is said to be {\em compliant} with $\{q_j\}_{j=1}^m$, if all rejected $p$-values are always bounded above by $q_R$, where $R$ is the number of rejections.
\end{definition}

In the case of no rejections ($R = 0$), as a convention, the (non-existent) rejected $p$-value is considered to be bounded above by $q_R$. Compliance is an instance of a more general condition termed {\em self-consistency} \cite{blanchard2008two,blanchard2009adaptive}, which, roughly speaking, requires that any rejected $p$-value be upper bounded by a general function of the total number of rejections. Interestingly, the compliance condition as a special instance of self-consistency has not been considered in the literature. Here, we prefer to use the compliance condition as self-consistency further allows a procedure to incorporate prior information about each hypothesis into the cutoffs, which is beyond the scope of this paper. Using the BHq critical values $\{qj/m\}$ as the cutoffs (referred to as BHq-compliance henceforth), however, our condition is sufficiently general to cover many classical multiple testing procedures, including both the step-down and step-up procedures, the
generalized step-up-step-down procedures \cite{tamhane1998,sarkarstepwise} and particularly the PrivateBHq procedure (Proposition~\ref{prop:compliant}). The compliance condition is solely determined by the number of rejected $p$-values and the size of the largest one,
without requiring that each rejected $p$-value be below its associated critical value. As a consequence, this condition permits skipping the smallest $p$-values and this is well-suited for differentially private procedures. 

%Notably, the compliance condition is recently leveraged for proving FDR control using 
%As an aside, it is generally easy to identify whether a given procedure satisfies this condition or not. 

As revealed by this work, FDR control, roughly speaking, is a consequence of BHq-compliance together with the \textit{independence with the null} condition (Definition \ref{def:iws}). As such, our finding offers more than expected, applying to far more examples than PrivateBHq. In detail, we consider a generalized FDR \cite{kfdr,kfdrfollow} defined as
\[
\fdr_k := \E \left[ \frac{V}{R}; V \ge k \right],
\]
where $V$ denotes the number of true null hypotheses that are falsely rejected (false discoveries). The present paper primarily focuses on the case of $k \ge 2$ and, whenever clear from the context, the term FDR control in this paper stands for $\fdr_k$ control. Note that $\fdr_k$ reduces to the usual FDR if the positive integer $k$ is set to 1. This slightly relaxed FDR permits no more than $k-1$ false discoveries without any penalty, trading off for more power improvement while still maintaining a meaningful interpretation of the rejected hypotheses. The difference between the original FDR and $\fdr_k$ becomes negligible if the number of discoveries $R$ is large. As an aside, we remark that the compliance condition is not satisfied by the $\fdr_k$-controlling procedures developed in \cite{kfdr,kfdrfollow}.

%\red{reduce the text about IWN about. Complained by the reviewers.}
Now we introduce the independence with the null condition, which is concerned with the distribution of the $p$-values. This condition is satisfied by the three examples in Appendix B. 
\begin{definition}\label{def:iws}
A set of $m$ test statistics are said to satisfy a condition referred to as \textit{independence within the null}, or IWN for short, if the true null test statistics are jointly independent.
\end{definition}
More elaboration on this new condition is carried out following Theorem~\ref{thm:optimal} below.

With the two preparatory definitions in place, we offer the following theorem. Let $m_0$ denote the number of true null hypotheses and $\truenull_0 := m_0/m$ be the true null proportion.

\begin{theorem}\label{thm:robustfdr}
If the test statistics obey the IWN condition, then any procedure that is compliant with the \BHq critical values $\{qj/m\}_{j=1}^m$ must satisfy
\begin{equation}\label{eq:fdr_intro_main}
\fdr_k \leq C_k \truenull_0 q
\end{equation}
for every $k \ge 2$, where $C_k$ is a universal constant.
\end{theorem}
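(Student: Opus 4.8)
The plan is to eliminate the (arbitrary, possibly randomized) procedure from the problem entirely and reduce the bound to a statement about the order statistics of the true null $p$-values alone. Write $H_0$ for the index set of true nulls, let $U_{(1)} \le \cdots \le U_{(m_0)}$ denote the order statistics of $\{p_i : i \in H_0\}$, and set $V(t) := \#\{i \in H_0 : p_i \le t\}$. Since the procedure is compliant with $\{qj/m\}$, every rejected $p$-value, and in particular every rejected true null, is at most $qR/m$, so the number of false discoveries obeys $V \le V(qR/m)$ deterministically. Hence $\{V \ge k\} \subseteq \{V(qR/m) \ge k\}$, and writing $R = m\cdot(qR/m)/q$ gives the pointwise inequality
\[
\frac{V}{R}\,\mathbf{1}\{V \ge k\} \;\le\; \frac{V(qR/m)}{R}\,\mathbf{1}\{V(qR/m) \ge k\} \;=\; \frac{q}{m}\cdot\frac{V(qR/m)}{qR/m}\,\mathbf{1}\{V(qR/m) \ge k\}.
\]
Whenever $V(t) \ge k$ we have $V(t) = j$ for some $j \ge k$ with $U_{(j)} \le t$, so $V(t)/t \le j/U_{(j)} \le \max_{j \ge k} j/U_{(j)}$. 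The right-hand side no longer references $R$, so taking expectations yields $\fdr_k \le \frac{q}{m}\,\E\!\left[\max_{j \ge k} j/U_{(j)}\right]$. This is precisely the step that accommodates non-contiguous privacy rejections: at no point do we require $R$ to be a stopping time or the rejected set to be an initial segment of the sorted $p$-values.

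It then remains to show $\E[\max_{j \ge k} j/U_{(j)}] \le C_k m_0$, after which $\fdr_k \le (q/m)\,C_k m_0 = C_k \pi_0 q$. First I would reduce to the \iid uniform case: under IWN the null $p$-values are independent, each (super)uniform, so a monotone coupling $p_i \ge U_i$ with $U_i$ \iid uniform gives $V(t) \le \#\{i : U_i \le t\}$, and it suffices to treat $U_{(j)}$ as uniform order statistics. The heart of the argument is that $X_j := j/U_{(j)}$ is a backward submartingale in $j$. Using the Markov structure of order statistics, $U_{(j)}/U_{(j+1)} \sim \mathrm{Beta}(j,1)$ independently of $\mathcal{G}_{j+1} := \sigma(U_{(j+1)}, \ldots, U_{(m_0)})$, and $\E[\mathrm{Beta}(j,1)^{-1}] = j/(j-1)$, so
\[
\E\!\left[X_j \mid \mathcal{G}_{j+1}\right] = \frac{j^2}{j-1}\cdot\frac{1}{U_{(j+1)}} \;\ge\; \frac{j^2 - 1}{j-1}\cdot\frac{1}{U_{(j+1)}} = \frac{j+1}{U_{(j+1)}} = X_{j+1}.
\]
Equivalently, $j/U_{(j)}$ is a convex function of the running average of \iid exponentials, which is a reversed martingale.

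With this submartingale in hand I would apply Doob's $L^p$ maximal inequality for some $1 < p < k$, treating decreasing $j$ as forward time so that $j = k$ is the terminal index: $\big\|\max_{k \le j \le m_0} X_j\big\|_p \le \frac{p}{p-1}\,\|X_k\|_p = \frac{p}{p-1}\,k\,(\E[U_{(k)}^{-p}])^{1/p}$. Since $U_{(k)} \sim \mathrm{Beta}(k, m_0 - k + 1)$, the moment $\E[U_{(k)}^{-p}]$ is finite exactly when $p < k$ and scales like $m_0^p$ up to a $k$-dependent constant, whence $\E[\max_j X_j] \le \|\max_j X_j\|_p \le C_k m_0$ with $C_k$ depending only on $k$. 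Combining with the first paragraph proves the theorem.

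The main obstacle is exactly this last moment control. The weak-type $(1,1)$ (Ville/Doob) bound $\P(\max_j X_j \ge \lambda) \le \E[X_k]/\lambda$ is too lossy: its $1/\lambda$ tail integrates to a logarithmic divergence and so cannot bound $\E[\max_j X_j]$ at all. One must instead pass to the strong $L^p$ inequality, which is feasible only because restricting to $V \ge k$ truncates the maximum before the wild fluctuations near $j = 1$; indeed $\E[U_{(1)}^{-1}] = \infty$, which is why the conclusion genuinely requires $k \ge 2$ and why $\E[X_k^p]$ is finite only for $p < k$. The remaining technical point is to verify that $\E[U_{(k)}^{-p}]$ grows no faster than $m_0^p$ uniformly in $m_0$, so that $C_k$ is truly universal rather than dimension-dependent.
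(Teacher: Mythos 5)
Your proposal is correct and its architecture coincides with the paper's: compliance gives the pointwise bound $\fdp_k \le \max_{k\le j\le m_0} qj/(m\,p^0_{(j)})$ (the paper's key inequality \eqref{eq:fdp_main}), IWN plus superuniformity reduces to \iid uniform order statistics, and a backward submartingale combined with a maximal inequality finishes the job. The two places you deviate are both legitimate and worth noting. First, you establish the backward submartingale property of $j/U_{(j)}$ directly from the Markov structure of order statistics, using $U_{(j)}/U_{(j+1)}\sim\mathrm{Beta}(j,1)$ and $\E[\mathrm{Beta}(j,1)^{-1}]=j/(j-1)$; the paper instead passes to the exponential representation $U_{(j)}\overset{d}{=}T_j/T_{n+1}$, shows $T_j/(jT_{n+1})$ is a backward \emph{martingale}, and invokes Jensen. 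Your route is more self-contained (no representation lemma needed), at the cost of the computation only making sense for $j\ge 2$ --- which is harmless since you only need the chain down to $j=k\ge 2$. Second, you close with Doob's $L^p$ maximal inequality for $1<p<k$ rather than the paper's $L\log L$ ($\ell_1$) maximal inequality; your diagnosis of why the weak-type bound fails and why $k\ge 2$ is essential is exactly right. The one step you flag but do not carry out, namely $\E[U_{(k)}^{-p}]=\frac{\Gamma(k-p)\,\Gamma(m_0+1)}{\Gamma(k)\,\Gamma(m_0+1-p)}\le \frac{\Gamma(k-p)}{\Gamma(k)}\,m_0^p$, is routine (for non-integer $p$ it follows from Gautschi's inequality), so the gap is cosmetic. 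The resulting constant $\frac{p}{p-1}\,k\bigl(\Gamma(k-p)/\Gamma(k)\bigr)^{1/p}$ differs from the paper's $\frac{\ep{}}{\ep{}-1}\bigl[1+\frac{k^k}{(k-1)^2(k-1)!}\bigr]$, and neither is the optimal $C_k$ of \eqref{eq:c_k_intro}, which the paper obtains only through the separate limiting argument of Section~\ref{sec:sharpening-constants}; since the theorem as stated asks only for a universal constant, your proof establishes it in full.
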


We immediately obtain the following corollary.
\begin{corollary}\label{cor: bhq_app}
If the test statistics obey the IWN condition, both the BHq step-up and step-down procedures satisfy \eqref{eq:fdr_intro_main} for $k \ge 2$.
\end{corollary}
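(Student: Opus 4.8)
The plan is to derive Corollary~\ref{cor: bhq_app} directly from Theorem~\ref{thm:robustfdr}. Since the IWN hypothesis of Definition~\ref{def:iws} is assumed in both statements, the only thing I need to check is that each of the two procedures in question --- the \BHq{} step-up procedure of Algorithm~\ref{algo:originalbh} and the \BHq{} step-down procedure of Algorithm~\ref{algo:smallbh} --- is compliant with the \BHq{} critical values $\{qj/m\}_{j=1}^m$ in the sense of Definition~\ref{def:compliant}. Once compliance is established, Theorem~\ref{thm:robustfdr} applies verbatim and yields $\fdr_k \le C_k\pi_0 q$ for every $k\ge 2$, which is exactly \eqref{eq:fdr_intro_main}. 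Throughout I write $p_{(1)}\le\cdots\le p_{(m)}$ for the sorted $p$-values and let $R$ denote the number of rejections; because both procedures reject a contiguous initial block $p_{(1)},\dots,p_{(R)}$ of the sorted list, the largest rejected $p$-value is always $p_{(R)}$, so compliance reduces to the single inequality $p_{(R)}\le qR/m$.

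For the step-up procedure, $R$ is by construction the largest index $j$ satisfying $p_{(j)}\le qj/m$: Algorithm~\ref{algo:originalbh} scans $j$ downward from $m$ and halts at the first such $j$, rejecting $p_{(1)},\dots,p_{(j)}$. Hence $R=j$ and $p_{(R)}=p_{(j)}\le qj/m=qR/m$, which is precisely the compliance inequality.

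For the step-down procedure the argument is just as short but relies on the per-index halting rule rather than a global maximum. Algorithm~\ref{algo:smallbh} scans $j$ upward from $1$, rejecting $p_{(j)}$ exactly while $p_{(j)}\le qj/m$ holds and halting at the first violation; thus it rejects $p_{(1)},\dots,p_{(R)}$, where $R$ is the last index to pass the test. In particular the test is passed at $j=R$, so $p_{(R)}\le qR/m$, again giving compliance. In both cases the convention following Definition~\ref{def:compliant} covers the degenerate outcome $R=0$, for which compliance holds vacuously.

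With compliance of both procedures in hand, Theorem~\ref{thm:robustfdr} gives \eqref{eq:fdr_intro_main} under IWN for each of them, completing the proof. I do not expect a substantive obstacle here: the entire content is reading the halting condition off each algorithm and observing that the index at which rejection stops certifies $p_{(R)}\le qR/m$. The only point deserving care is that step-up and step-down certify this inequality for different reasons --- step-up because $R$ is the \emph{largest} index clearing its critical value, step-down because $R$ is the \emph{last} index before the first failure --- and that neither argument requires every rejected $p$-value to lie below its own critical value, only the largest one to lie below $qR/m$. This is exactly the slack that Definition~\ref{def:compliant} is designed to exploit, and it is also what makes the same umbrella theorem cover the private procedure.
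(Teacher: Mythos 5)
Your proposal is correct and matches the paper's (implicit) argument: the paper presents this corollary as an immediate consequence of Theorem~\ref{thm:robustfdr}, the only content being that both procedures reject a contiguous initial block of the sorted $p$-values with $p_{(R)}\le qR/m$ certified by the respective halting rules, exactly as you verify. Nothing further is needed.
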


This bound involves an additional factor $C_k$, compared with the usual bound $\truenull_0 q$ in the FDR literature. Explicitly, letting $\{\xi_j\}_{j=1}^{\infty}$ be \iid~exponential random variables with mean 1, the constant is given as
\begin{equation}\label{eq:c_k_intro}
C_k = \E \left[ \max_{j \ge k} \frac{j}{\xi_1 + \cdots + \xi_j} \right].
\end{equation}
For example, $C_2 \approx 2.41, C_3 \approx 1.85, C_{10} \approx 1.32$, and $C_k$ tends to 1 as $k \goto \infty$. In particular, $C_1$ defined in \eqref{eq:c_k_intro} is infinite, and this is exactly why Theorem \ref{thm:robustfdr} does not apply to the usual FDR.  

Theorem \ref{thm:robustfdr} is optimal for all $k \ge 2$ as we show next.
\begin{theorem}\label{thm:optimal}
Given any $C < C_k$, if $q$ is sufficiently small and $m$ is sufficiently large, then there exists a \BHq-compliant procedure applied to a set of IWN $p$-values such that
\begin{equation}\nonumber
\fdr_k > C q.
\end{equation}
\end{theorem}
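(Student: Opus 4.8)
The plan is to exhibit, for each $C<C_k$, a set of IWN $p$-values together with a \BHq-compliant procedure whose $\fdr_k$ exceeds $Cq$; concretely I will construct instances on which $\fdr_k/q\to\pi_0 C_k$ as the parameters are driven to their limits, and since $\pi_0$ can be pushed arbitrarily close to $1$ this beats $Cq$. The instance consists of $N$ \emph{filler} non-null hypotheses whose $p$-values are identically $0$ (equivalently, drawn from Uniform$[0,\e]$ with $\e\to0$) together with $m_0=m-N$ true null $p$-values that are \iid~Uniform$[0,1]$; the nulls are independent, so the IWN condition holds, and I take $N=o(m)$ so that $\pi_0=m_0/m\to1$. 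The role of the fillers is to let a compliant procedure \emph{tune} the number of rejections without being forced to include the fillers in the false-discovery count.

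For a candidate number $v\ge k$ of null rejections, let $p^{\mathrm{null}}_{(v)}$ denote the $v$-th smallest positive (hence null) $p$-value and set $R(v):=\lceil m\,p^{\mathrm{null}}_{(v)}/q\rceil$, the smallest $R$ for which the \BHq~cutoff $qR/m$ is at least $p^{\mathrm{null}}_{(v)}$. The procedure rejects the $v$ smallest nulls together with exactly $R(v)-v$ of the fillers, leaving the remaining fillers unrejected; this is feasible whenever $v\le R(v)\le\min(v+N,m)$, and by construction every rejected $p$-value is at most $p^{\mathrm{null}}_{(v)}\le qR(v)/m$, so the procedure is \BHq-compliant. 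Among all feasible $v\ge k$ it chooses $v^\star=\argmax_{v}\,v/R(v)$ (rejecting nothing if no feasible $v\ge k$ exists). Since the fillers are exactly $0$ while the nulls are a.s.\ positive, the procedure realizes $V=v^\star$ false rejections out of $R(v^\star)$, so on the feasibility event $V/R=\max_{v\ge k}v/R(v)$ with $V\ge k$.

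The constant emerges from the classical Poisson limit of the smallest uniform order statistics: as $m\to\infty$ one has $m_0\,p^{\mathrm{null}}_{(v)}\Rightarrow\Gamma_v:=\xi_1+\cdots+\xi_v$ jointly in $v$, with $\xi_i$ \iid~exponential of mean $1$. Hence $R(v)\sim m\,p^{\mathrm{null}}_{(v)}/q\Rightarrow\Gamma_v/(\pi_0 q)$, and therefore $V/R=\max_{v\ge k}v/R(v)\Rightarrow\pi_0 q\max_{v\ge k}v/\Gamma_v=\pi_0 q\,M_k$, where $M_k:=\max_{j\ge k}j/\Gamma_j$ is exactly the random variable whose expectation defines $C_k$ in \eqref{eq:c_k_intro}. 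Taking expectations, $\fdr_k=\E[(V/R)\mathbf 1(V\ge k)]\to\pi_0 q\,\E[M_k;\text{feasible}]$, and letting $N\to\infty$ (so the constraint $R(v^\star)-v^\star\le N$ is eventually met for every fixed realization, since $R(v^\star)-v^\star\approx\Gamma_{v^\star}/(\pi_0 q)$ and $\Gamma_{v^\star}\le v^\star$ on the maximizing event) gives $\E[M_k;\text{feasible}]\uparrow\E[M_k]=C_k$ by monotone convergence, using integrability of $M_k$ for $k\ge2$. Thus for $q$ small, $N$ large, and $m$ large with $1/q\ll N\ll m$ we obtain $\fdr_k>\pi_0 q(C_k-\delta)>Cq$, as required.

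The main obstacle is the rigorous interchange of the three limits together with the attendant integrability control, since $V/R$ is a supremum over the infinitely many indices $v\ge k$. I must (i) truncate to $v\le L$ and bound the tail contribution of $\max_{v>L}v/\Gamma_v$ uniformly, which uses the tail estimate $\P(M_k\ge t)=O(t^{-k})$ — precisely what makes $\E[M_k]$ finite for $k\ge2$ and infinite for $k=1$, explaining the restriction to $k\ge2$; and (ii) couple $\{m_0\,p^{\mathrm{null}}_{(v)}\}_{v\le L}$ to $\{\Gamma_v\}_{v\le L}$ with an error uniform over the truncation before sending $m\to\infty$ and then $N\to\infty$, absorbing the $o(1)$ terms. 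A secondary point is checking that the finitely many (post-truncation) maximizing indices satisfy the feasibility bounds $v\le R(v)\le\min(v+N,m)$ with probability tending to $1$, which follows from the scalings $R(v)\approx\Gamma_v/(\pi_0 q)$ and $1/q\ll N\ll m$.
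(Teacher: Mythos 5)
Your proposal is correct and follows essentially the same route as the paper: the same adversarial instance (i.i.d.\ uniform true-null $p$-values plus a block of identically-zero false-null ``fillers''), the same compliant procedure (reject the maximizing number of smallest nulls together with just enough zeros to reach the \BHq threshold), and the same asymptotic regime $1/q \ll m - m_0 \ll m$, including the paper's footnoted feasibility caveat that $q$ must not shrink too fast relative to the number of fillers. The only difference is cosmetic: you identify the limiting constant via the Poisson/gamma limit $m_0\,p^{\mathrm{null}}_{(v)}\Rightarrow \xi_1+\cdots+\xi_v$, whereas the paper reaches the same $C_k$ through the finite-$n$ representation $U_{(j)}\overset{d}{=}T_j/T_{n+1}$ combined with the monotonicity of $C_k^{(n)}$ (Lemma \ref{lm:d_monotone}) and uniform integrability (Lemma \ref{lm:uniform_int}); both require the same integrability control that restricts the result to $k\ge 2$.
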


In the literature, existing FDR-controlling procedures often assume independence between the true null and false null test statistics (see \cite{BenjaminiH95,benjamini2006adaptive}) or certain sophisticated correlation structures between these two sets of test statistics, such as the positive regression dependent on subset (PRDS) property \cite{prds,lehmann1966some,sarkar1969some} (see also \cite{sarkarstepwise,blanchard2009adaptive}). Roughly speaking, the PRDS property holds if the test statistics exhibit certain positive dependence on each true null test statistic. In particular, the dependence between true and false nulls cannot be arbitrary. For the sake of completeness, we emphasize that the literature has considered a few cases for FDR control with an arbitrary correlation between the two sets of test statistics \cite{prds,blanchard2008two}, but, unfortunately, the associated procedures are often extremely conservative. As a well-known example, Benjamini and Yekutieli showed in Theorem 1.3 of \cite{prds} that the BHq procedure gives FDR control using critical values at level $q/(1 + \frac1{2} + \cdots + \frac1{m}) \approx q/(\log m + 0.577)$ in place of $q$. In fact, BHq with this log-factor correction could be even more conservative than the Bonferroni method \cite{mcdaneld2014genomewide}.

In contrast, Theorem \ref{thm:robustfdr} makes no assumptions regarding the dependence between the true nulls and false nulls while still controlling the FDR up to a small multiplicative factor, as the IWN condition is concerned only with the true nulls. As such, Theorem \ref{thm:robustfdr} is a contribution of independent interest to the vast FDR literature. Notably, the dependence can even be ``adversarial'' in the sense that the false null $p$-values can even be constructed as arbitrary functions of the true null $p$-values. This provides positive evidence toward understanding the robustness of the BHq procedure observed in a wide range of theoretical and empirical studies
\cite{storey2003positive,ge2008some,clarke2009robustness}.

\section{The PrivateBHq Procedure}
\label{sec:privfdr}

In this section, we first introduce the differential privacy machinery at a minimal level and then focus on developing the PrivateBHq procedure.

\subsection{Preliminaries on differential privacy}
A database $D = (d_1, d_2, \ldots, d_n) \in \mathcal{X}^n$ consists of $n$ data items (for example, health records of $n$ individuals), where $\mathcal{X}$ is a sample universe. 
%cd: added sentence:
Data items need not be independent (for example, health records of siblings).
Two databases $\dbax,\dbby = (d_1', d_2', \ldots, d'_n)$ are said to be {\em neighbors}, or {\em adjacent}, if they differ only in one data item. That is, there is exactly one $j$ such that $d_j \ne d'_j$. A (randomized) mechanism $\mathcal{M}$ is an algorithm that takes a database as input and releases some (randomized) response of interest.  We denote by $\range (\mathcal{M})$ the collection of all possible outputs of the mechanism $\mathcal{M}$. In the context of genome-wide association studies, a database $D$ records genotypes of individuals, and $\mathcal{M}$, for example, is a mechanism that releases the minor allele frequency of a SNP plus some random noise.

Differential privacy, now sometimes called {\em pure differential privacy}, was defined and first constructed in~\cite{DworkMNS06}.  The relaxation defined next is sometimes referred to as {\em approximate differential privacy}.

\begin{definition}[Differential Privacy \cite{DworkMNS06,DworkKMMN06}] 
\label{def:dp}
A (randomized) mechanism $\mathcal M$ is $(\e,\delta)$-differentially private for some nonnegative $\epsilon, \delta$ if
for all adjacent databases $\dbax,\dbby$ and for any measurable event $S \subset \range(\mathcal M)$,
\[
\P(\mathcal{M}(D) \in S) \le \ep{\e} \P(\mathcal M (D') \in S) + \delta.
\]
\end{definition}

Pure differential privacy is the special case where $\delta=0$. In the definition above, both databases $D, D'$ are fixed and the probabilities are taken over the randomness of the mechanism $\mathcal{M}$. The parameters $\epsilon$ and $\delta$ measure the desired privacy protection. With small $\epsilon$ and $\delta$, this definition states that the likelihood of the released response is indifferent to changing a single individual in the database, thus leaking little indication of whether a particular individual is in the database even if all the other individuals are known. This provides strong privacy protection for each individual in and outside the database.

To report a statistic $f = f(D)$ in a differentially private manner, it is necessary to randomize the mechanism. As its name suggests, the \textit{Laplace mechanism} preserves privacy by perturbing $f$ with noise generated from the Laplace distribution $\lap(\lambda)$, whose probability density is $\exp(-|x|/\lambda)/(2\lambda)$. The scale $\lambda > 0$ should be calibrated to the \textit{sensitivity} of the statistic $f$, defined as follows.

\begin{definition}\label{def:sensitivity}
Let $f$ be a real or vector valued function that takes as input a database.  The sensitivity of $f$, denoted as $\Delta f$, is the supremum of $\|f(\dba)-f(\dbb)\|_1$ over all adjacent $\dba,\dbb$, where $\|\cdot\|_1$ denotes the $\ell_1$ norm.
\end{definition}

Formally, for any function $f$ that maps databases to $\R^r$ for some positive integer $r$, we have the following result.
\begin{lemma}[Laplace Mechanism~\cite{DworkMNS06}]\label{lm:comp}
The Laplace mechanism $\mathcal{M}_L$ that outputs
\[
\mathcal{M}_L(D; f) = f(D) + (Z_1, \ldots, Z_r)
\]
preserves $(\eps,0)$-differential privacy, where $Z_j$ are \iid~draws from $\lap(\Delta f/\eps)$.
\end{lemma}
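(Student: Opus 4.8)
The goal is to show that $\mathcal{M}_L(D;f) = f(D) + (Z_1,\ldots,Z_r)$ preserves $(\eps,0)$-differential privacy when the $Z_j$ are i.i.d.\ draws from $\lap(\Delta f/\eps)$. Since this is pure differential privacy ($\delta = 0$), by Definition~\ref{def:dp} it suffices to show that for every pair of adjacent databases $D, D'$ and every measurable event $S \subset \range(\mathcal{M}_L) = \R^r$, we have $\P(\mathcal{M}_L(D) \in S) \le \ep{\eps}\,\P(\mathcal{M}_L(D') \in S)$. The plan is to reduce this event-level inequality to a pointwise ratio bound on the output densities and then integrate.

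First I would write down the density of the output. Because the $Z_j$ are i.i.d.\ Laplace with scale $\lambda = \Delta f/\eps$, the random vector $\mathcal{M}_L(D)$ has density
\[
g_D(\vec{y}) = \prod_{j=1}^r \frac{1}{2\lambda}\exp\!\left(-\frac{|y_j - f(D)_j|}{\lambda}\right),
\]
and analogously $g_{D'}(\vec{y})$ with $f(D')$ in place of $f(D)$. The key step is to bound the pointwise ratio $g_D(\vec{y})/g_{D'}(\vec{y})$. Forming the ratio, the normalizing constants cancel and the exponents combine into
\[
\frac{g_D(\vec{y})}{g_{D'}(\vec{y})} = \exp\!\left( \frac{1}{\lambda}\sum_{j=1}^r \big( |y_j - f(D')_j| - |y_j - f(D)_j| \big) \right).
\]
By the reverse triangle inequality, each term satisfies $|y_j - f(D')_j| - |y_j - f(D)_j| \le |f(D)_j - f(D')_j|$, so the sum is at most $\|f(D)-f(D')\|_1 \le \Delta f$, where the last inequality is exactly the definition of sensitivity (Definition~\ref{def:sensitivity}) applied to the adjacent pair $D,D'$. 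Substituting $\lambda = \Delta f/\eps$ yields
\[
\frac{g_D(\vec{y})}{g_{D'}(\vec{y})} \le \exp\!\left(\frac{\Delta f}{\lambda}\right) = \ep{\eps}
\]
for every $\vec{y} \in \R^r$. Integrating the pointwise bound $g_D(\vec{y}) \le \ep{\eps}\, g_{D'}(\vec{y})$ over any event $S$ gives $\P(\mathcal{M}_L(D) \in S) \le \ep{\eps}\,\P(\mathcal{M}_L(D') \in S)$, which is the desired inequality with $\delta = 0$.

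I do not anticipate a genuinely hard obstacle here, as the argument is a standard density-ratio computation; the only points requiring care are bookkeeping ones. One must correctly invoke the reverse triangle inequality in the direction that upper-bounds the exponent (it is easy to flip a sign), and one must use the $\ell_1$ structure so that the coordinatewise bounds aggregate precisely into $\|f(D)-f(D')\|_1$, matching the $\ell_1$ norm in Definition~\ref{def:sensitivity}. A minor technical remark is that the pointwise density bound immediately transfers to an inequality on the probabilities of arbitrary measurable sets by monotonicity of the integral, so no additional measure-theoretic machinery is needed.
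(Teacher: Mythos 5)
Your proof is correct and is exactly the canonical density-ratio argument; the paper itself does not reprove this lemma but cites it to the original differential privacy paper, where this is the standard proof. The key steps --- the pointwise bound on the ratio of output densities via the triangle inequality, the aggregation of coordinatewise differences into the $\ell_1$ sensitivity $\Delta f$, and integration over an arbitrary measurable event $S$ --- are all present and handled correctly.
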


Intuitively, sensitivity quantifies the effect of any individual in the dataset on the outcome of the analysis. In this mechanism, Laplace noise with magnitude proportional to the sensitivity has the effect of masking the characteristics of any individual, thereby preserving privacy.

% effect any individual’s data can have on the result of the analysis.
% the characteristics of any individual in the dataset are quantified by the sensitivity. 

A simple algorithm that integrates the Laplace mechanism is the
Private Min, which is better known as the Report Noisy Min in the
literature \cite{DworkR14} and will be the building block of
PrivateBHq, introduced in Section \ref{sec:procedure}. Consider a
collection of scalar functions $f_1, \dots, f_m$. The Private Min adds
Laplace noise to each $f_j$ and then reports the smallest noisy count
(with fresh noise added) and its index. A formal description of
Private Min is given in Algorithm \ref{algo:rnm}. The following lemma
concerns its privacy property. 
%%Interested readers are referred to a recent result \cite{raskhodnikova2016lipschitz} that reduces the amount of noise for the Private Min to achieve privacy protection.

\begin{lemma}\label{lm:pm_dp}
The Private Min, as detailed in Algorithm \ref{algo:rnm}, is $(\epsilon, 0)$-differentially private.
\end{lemma}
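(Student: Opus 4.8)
The plan is to prove the bound directly, exploiting the special structure of reporting a single argmin, rather than treating Algorithm~\ref{algo:rnm} as a composition of $m$ separate Laplace releases (which would only yield an $m\epsilon$-type guarantee and would not explain the single $\epsilon$ claimed). Fix a pair of adjacent databases $D, D'$. Because the selected output is a discrete index in $\{1,\dots,m\}$ (together, possibly, with a noisy value, treated below), it suffices by countable additivity to establish the per-index inequality $\P(\mathcal{M}(D) = i) \le \ep{\epsilon}\,\P(\mathcal{M}(D') = i)$ for every $i$; summing over $i \in S$ then recovers Definition~\ref{def:dp} with $\delta = 0$. So I would fix a target index $i$ and bound its selection probability.

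The key reduction is to condition on the noise added to every coordinate except $i$. Writing $Z_1,\dots,Z_m$ for the \iid $\lap(\lambda)$ draws and fixing $Z_{-i} = z_{-i}$, index $i$ is reported as the noisy minimum exactly when $f_i(D) + Z_i \le f_j(D) + z_j$ for all $j \ne i$, that is, when $Z_i \le t(D; z_{-i})$, where I define the threshold $t(D; z_{-i}) := \min_{j \ne i}\big[f_j(D) + z_j\big] - f_i(D)$. Since the same fixed values $z_{-i}$ enter both the $D$- and the $D'$-computation and are integrated against the same density, it suffices to prove the pointwise tail comparison $\P\big(Z_i \le t(D;z_{-i})\big) \le \ep{\epsilon}\,\P\big(Z_i \le t(D';z_{-i})\big)$ and then integrate over $z_{-i}$. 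This one-dimensional comparison follows from the shift property of the Laplace density: setting $s := t(D;z_{-i}) - t(D';z_{-i})$ and substituting $z = u + s$, the bound $g(u+s) \le \mathrm{e}^{|s|/\lambda}\, g(u)$ for the density $g$ of $\lap(\lambda)$ gives $\P(Z_i \le t(D;z_{-i})) \le \mathrm{e}^{s/\lambda}\,\P(Z_i \le t(D';z_{-i}))$, a factor that is at most $1$ when $s \le 0$.

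It remains to control the threshold shift $s$, which is the crux of the argument. Writing $\Delta := \max_j \sup_{D, D'} |f_j(D) - f_j(D')|$ for the sensitivity to which Algorithm~\ref{algo:rnm} calibrates its noise, the $1$-Lipschitzness of $\min$ in the sup-norm gives
\[
|s| \le \Big|\min_{j\ne i}[f_j(D)+z_j] - \min_{j\ne i}[f_j(D')+z_j]\Big| + |f_i(D)-f_i(D')| \le \max_{j \ne i}|f_j(D)-f_j(D')| + \Delta \le 2\Delta.
\]
Thus, with the noise scale $\lambda$ chosen so that $2\Delta/\lambda \le \epsilon$, we get $\mathrm{e}^{s/\lambda} \le \ep{\epsilon}$, completing the per-index bound and hence pure differential privacy of the reported index. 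The main obstacle is precisely this shift bound: a single record may perturb $f_i$ and its minimizing competitor $f_j$ in opposite directions, which is what produces the factor of $2$. I would remark that when the scores underlying Algorithm~\ref{algo:rnm} move monotonically under a single-record change (as for the log-$p$-value mechanism used by PrivateBHq), the two contributions share a sign and partially cancel, sharpening the bound to $|s| \le \Delta$ and permitting the smaller scale $\lambda = \Delta/\epsilon$.

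Finally, if Algorithm~\ref{algo:rnm} additionally releases the noisy value at the selected index with fresh, independent noise, I would account for it separately: conditioned on the reported index $i$, that release is an instance of the Laplace mechanism applied to $f_i$ and is differentially private by Lemma~\ref{lm:comp}, so splitting the budget and invoking the composition property recalled in Section~\ref{sec:making-bhq-private} yields the overall $(\epsilon,0)$-guarantee for the pair (index, value). The selection step analyzed above is the substantive part; the value release is routine given the Laplace mechanism.
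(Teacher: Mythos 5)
Your overall decomposition into an ``index release'' plus a ``value release'' is the same as the paper's, and your direct argument for the index part --- conditioning on $Z_{-i}$, comparing the thresholds $t(D;z_{-i})$ and $t(D';z_{-i})$, and invoking the shift property of the Laplace density --- is a correct, self-contained rendering of the standard Report Noisy Min analysis that the paper simply imports by citing Claim 3.9 of \cite{DworkR14}. The problem is quantitative: your privacy budget does not add up to $\epsilon$. Algorithm \ref{algo:rnm} fixes the noise scale at $\lambda = 2\Delta/\epsilon$. With your worst-case shift bound $|s| \le 2\Delta$, the index release is only shown to be $(\epsilon,0)$-differentially private, since $\mathrm{e}^{2\Delta/\lambda} = \mathrm{e}^{\epsilon}$; the fresh-noise value release is a Laplace mechanism applied to a sensitivity-$\Delta$ function at scale $2\Delta/\epsilon$, which costs another $\epsilon/2$. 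Basic composition then yields $(3\epsilon/2,\,0)$-differential privacy, not the claimed $(\epsilon,0)$. Your phrase ``splitting the budget'' presupposes that the index release costs only $\epsilon/2$, but your main argument never establishes that: at scale $\lambda = 2\Delta/\epsilon$ you would need the shift bound $|s| \le \Delta$, which you obtain only in your closing remark under an additional monotonicity hypothesis that is not among the lemma's assumptions.

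The paper's proof closes exactly this gap by asserting, via Claim 3.9 of \cite{DworkR14}, that reporting the index of the smallest noisy value is $(\epsilon/2,0)$-private at noise scale $2\Delta/\epsilon$; the $\epsilon/2$ constant there is derived for monotone (counting-type) queries, for which a single record moves all scores in the same direction and the threshold shift is $\Delta$ rather than $2\Delta$ --- precisely the cancellation you describe in your remark. So to repair your proof you must either promote that remark to the main argument and justify why the one-sided shift bound applies to the $f_j$ to which the lemma is applied, or enlarge the noise scale to $4\Delta/\epsilon$, or settle for the weaker $(3\epsilon/2,0)$ guarantee. As written, the argument does not establish the stated $(\epsilon,0)$ bound, and your own careful worst-case accounting is what exposes the factor-of-two issue that the paper's citation glosses over.
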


A peek at the proof of this well known lemma, which for completeness appears in the appendix, reveals that reporting each of $j^\star$ and $f_{j^\star}(D) + Z$ is $(\epsilon/2, 0)$-differentially private, hence leading to a total privacy loss of $(\epsilon/2, 0) + (\epsilon/2, 0) = (\epsilon, 0)$. Here we have used the simple fact that differential privacy loss adds up under the composition of sequential mechanisms, that is, the union of the outputs of a sequence of mechanisms that each preserve  $(\epsilon_j, \delta_j)$-differential privacy is $(\sum \epsilon_j, \sum \delta_j)$-differentially private \cite{DworkMNS06}. As an aside, the Advanced Composition Theorem \cite{DworkRV10} (see Lemma \ref{lm:advancecomp} in Section \ref{sec:preserving-privacy}), provides a much tighter bound on this privacy degradation.

\begin{algorithm}[ht]
\caption{Private Min (Report Noisy Min)} 
\label{algo:rnm}
\begin{algorithmic}[1]
\REQUIRE database $D$, functions $f_1,\dots,f_m$ each with sensitivity at most $\Delta$, and privacy parameter $\epsilon$
\ENSURE index $j^\star$ and approximation to $f_{j^\star}(D)$ 
\FOR {$j=1$ to $m$}
  \STATE set $\widetilde f_j = f_j(D) + Z_j$, where $Z_j$ is independently sampled from $\lap(2\Delta/\e)$
\ENDFOR
\STATE return $j^\star = \underset{j}{\argmin} ~ \widetilde f_j$ and $f_{j^\star}(D) + Z$, where $Z$ is a fresh draw from $\lap(2\Delta/\eps)$
\end{algorithmic}
\end{algorithm}

%cd: added paragraph
Looking ahead, and omitting some technicalities, PrivateBHq will operate on differentially private approximations to the logarithms of $p$-values, returned by multiple invocations of Private Min.  Since differential privacy is closed under post-processing~\cite{DworkR14}, any subsequent computation on these differentially privately obtained values can never increase privacy loss.  Thus, PrivateBHq is indeed differentially private, for all $p$-value functions. Its statistical properties will depend on the kinds of $p$-value computations that are performed, which we turn to next.

\subsection{Multiplicative sensitivity of $p$-values}
\label{sec:prelim:sensitivity}

Multiple testing procedures ubiquitously act on a set of $p$-values that are computed by functions that operate on databases. A $p$-value in our context is frequently referred to as the function on databases for computing the $p$-value instead of its numerical value, in contrast with the vast statistical literature that often does not distinguish between the function that maps a database to a $p$-value and the result of the mapping.

We now consider making $p$-value computations private as the first step toward developing a private multiple testing procedure. In many important $p$-value computations (see Example \ref{ex:binomial}), a larger $p$-value is affected more in magnitude by the change of a single data item than a smaller $p$-value. As a result, directly adding noise to the $p$-values may overprotect privacy and completely overwhelm signals in small $p$-values. This would inevitably lead to significant detection power loss as the smallest $p$-values are more likely to correspond to promising hypotheses.

Our solution will be to (very carefully) work with the logarithms of the $p$-values. This strategy is motivated by the observation that, although the (additive) sensitivity of a $p$-value may vary greatly, oftentimes the
relative change (that is, the ratio) of a $p$-value on two neighboring databases is very stable, regardless of the magnitude of the $p$-value, unless it is extremely small. In light of this observation, the sensitivity of a $p$-value, that is, the worst-case change due to the replacement of an individual in the database, is best measured multiplicatively. Below, $\eta$ and $\nu$ are nonnegative.

%cd: added the word function 
\begin{definition}[Multiplicative Sensitivity]
\label{def:multiplicative}
A $p$-value function $p$ is said to be $(\eta,\nu)$-{\em multiplicatively sensitive}, or $(\eta,\nu)$-{\em sensitive} for short, if for all adjacent databases $D$ and $D'$, either both $p(D), p(D') \le \nu$ or 
\[
\ep{-\eta} p(D) \le p(D') \le \ep{\eta} p(D).
\]
\end{definition}

Our PrivateBHq algorithm will make explicit use of both parameters in ensuring privacy. The parameter $\nu$ is introduced in recognition of the fact that a very small $p$-value may jump or fall by a relatively large multiplicative factor between adjacent databases. This parameter is normally much less than the Bonferroni level $q/m$~(see, for example, \cite{dudoit2008multiple}), resulting in essentially no power loss for truncating $p$-values at $\nu$. A $p$-value can satisfy different pairs of $(\eta, \nu)$-multiplicative sensitivities. In short, the two parameters $\eta$ and $\nu$ exhibit a certain trade-off relationship in the sense that one can increase (resp.~decrease) $\eta$ and decrease (resp.~increase) $\nu$ in a careful way such that a $p$-value still satisfies this condition. Every $p$-value satisfies $(\eta,\nu)$-sensitivity for {\em some} values of the parameters.  Moreover, given $p$-value functions $p_1,p_2$ with multiplicative sensitivities
$(\eta_1,\nu_1)$ and $(\eta_2,\nu_2)$ respectively, it is immediate that both functions satisfy $(\max\{\eta_1,\eta_2\}, \max\{\nu_1,\nu_2\})$-sensitivity, so given a collection of $p$-values there always exist $\eta,\nu$ so that all of the $p$-values in the collection are $(\eta,\nu)$-sensitive.

Given an $(\eta,\nu)$-sensitivity $p$-value function $p$ and a database $D$, we work with the logarithmic mapping
\[
\pi(D; p, \nu) = \log \max \{ \nu, p(D) \}
\]
This statistic satisfies $\pi(D) - \eta \le \pi(D') \le \pi(D) + \eta$ for all neighboring databases $D, D'$. In other words, $\pi$ has an {\em additive} sensitivity bounded by $\eta$. Hence, Lemma \ref{def:sensitivity} ensures that adding Laplace noise $\lap(\eta/\eps)$ to $\pi(D)$ preserves $(\epsilon, 0)$-differential privacy.

%cd: changed from an example to two examples
We will see below via examples that two large and important classes of $p$-value computations are $(\eta,\nu)$-sensitive for some small $\eta$ and $\nu$, with rigorous proofs given in Appendix A; as a consequence of this, preserving privacy for these $p$-values only requires a small amount of noise, leading to negligible accuracy loss. Recall that $m$ denotes the total number of hypotheses.

\begin{example}[Binomial Distribution] \label{ex:binomial}
Suppose the $n$ individuals in $D$ are, respectively, associated with $n$ \iid~Bernoulli variables $\xi_1, \ldots, \xi_n$, each of which takes the value 1 with probability $\alpha$ and the value 0 otherwise. Let $T$ denote the sum. A $p$-value $p(D)$ for testing $H_0: \alpha \le \frac12$ against the alternative $H_1: \alpha > \frac12$ is defined as
\[
p(D) = \sum_{i=t}^n \frac1{2^n} {n\choose{i}},
\]
where $t$ is the realization of $T$ on the database $D$. Denote by $t'$ the counterpart of $t$ on a neighboring database $D'$. Without loss of generality, assume $t' = t + 1$. The difference between the two $p$-values, $|p(D) - p'(D)| = \frac1{2^n} {n\choose{t}}$, attains its maximum at $t = \lfloor n/2 \rfloor$ or $\lfloor (n+1)/2 \rfloor$ ($\lfloor x \rfloor$ denotes the greatest integer that is less than or equal to $x$) and decays rapidly as $t$ deviates from $n/2$. This implies that additive sensitivity is not a good measure of the variability of this $p$-value construction.

Instead, we fix a (very) small $\nu$ and denote by $\eta$ the maximum of $\log \frac{p(D)}{p(D')}$ subject to the constraint $p(D') \ge \nu$. The $p$-value by definition is $(\eta, \nu)$-sensitive. To evaluate $\eta$, observe that the log-likelihood ratio
\[
\log \frac{p(D)}{p(D')} = \log \frac{\sum_{i=t}^n \frac1{2^n} {n\choose{i}}}{\sum_{i=t+1}^n \frac1{2^n} {n\choose{i}}} = \log \left[ 1 + \frac{ {n\choose t}}{\sum_{i=t+1}^n  {n\choose{i}}} \right] \le \frac{ {n\choose t}}{\sum_{i=t+1}^n  {n\choose{i}}}.
\]
In the appendix, it is shown that ${n\choose t}/\sum_{i=t+1}^n{n\choose{i}} \lesssim \sqrt{\frac{\log n}{n}}$ under the constraint $p(D') \ge m^{-1-c}$ for any small constant $c > 0$ if $m \le \mathtt{poly}(n)$ (that is, $m$ grows at most polynomially in $n$) as $n \goto \infty$. Therefore, we can set $\nu = m^{-1-c}$ and $\eta \asymp \sqrt{\frac{\log n}{n}}$. Note that this choice of $\nu$ is much below the Bonferroni level $q/m$.

\end{example}

\begin{example}[Truncated Exponential Distribution]\label{ex:exponential}
Let $\zeta_1, \ldots, \zeta_n$ be \iid~random variables sampled from
the density $\frac{\lambda \ep{-\lambda x}}{ 1 - \ep{-A\lambda}} \cdot
\bm{1}(0 \le x \le A)$ for positive $A$ and $\lambda$, an exponential
distribution truncated at $A$. Denote by $T = \zeta_1 + \cdots +
\zeta_n$ the sum ($T$ is a sufficient statistic for $\lambda$). To
test $H_0: \lambda = 1$ against the alternative $H_1: \lambda > 1$, we
consider the $p$-value $p(D) = \P_{\lambda=1}(T \ge t)$, where $t$ is the realization of $T$ (note that the value $t$ differs at most by $A$ between adjacent databases). With the same notations as in Example \ref{ex:binomial}, this $p$-value is $(\eta, \nu)$-multiplicatively sensitive with $\nu = m^{-1-c}$ and $\eta \asymp \sqrt{\frac{\log n}{n}}$ for any small constant $c > 0$. Similarly, the analysis applies to the case of a Gaussian distribution. In short, consider \iid~random variables $\xi_1, \ldots, \xi_n$ drawn from the normal distribution $\mathcal{N}(\mu, 1)$ truncated at $-A$ and $A$, which has density $\ep{-(x-\mu)^2/2}/\int_{-A}^A \ep{-(u-\mu)^2/2} \mathrm{d}u$. Writing $T = \xi_1 + \cdots + \xi_n$, we use the $p$-value $p(D) = \P_{\mu=0}(|T| \ge |t|)$ to test $H_0: \mu = 0$ against $H_1: \mu \ne 0$ ($t$ is the realization of $T$). Using the same proof strategy as for the exponential distribution, one can show that this $p$-value strategy is $(\eta,
\nu)$-multiplicatively sensitive with some $\nu = m^{-1-c}$ and $\eta \asymp \sqrt{\frac{\log n}{n}}$.

\end{example}

%cd: added:
We remark that $(\eta,\nu)$-sensitivity is a worst-case guarantee on the sensitivity of a $p$-value function. Only the interpretation of the $p$-value requires the i.i.d.\,assumption. Regarding the above-mentioned two examples, the asymptotic expressions of the privacy parameter $\eta$ can be easily made precise.

As seen in both examples, the parameter $\eta$ vanishes roughly at the
rate $O(n^{-1/2})$, implying that less noise is required for privacy
protection as the sample size becomes larger. This appealing feature
is impossible without the restriction $p \ge \nu$ for some appropriate
choice of $\nu$. Specifically, in the absence of this constraint, or
equivalently by setting $\nu = 0$, we shall have $\eta= n+1$ in the
first example and $\eta = \infty$ in the second, requiring a vast or
even an infinite amount of multiplicative noise for preserving
privacy. This would completely dilute any signal of interest. To be
complete, we note that not all $p$-value computations necessarily lead
to vanishing $\eta$ and $\nu$ as $n \goto \infty$. An example from
\cite{uhlerop2013privacy,yu2014scalable} considers a privacy-preserving release of $\chi^2$-statistics computed from allelic contingency tables. For the sake of
  simplicity, here we consider $2 \times 2$ contingency tables with
  $n/2$ cases and $n/2$ controls:

  \begin{table}[!htp]
%\hfill
\centering
    \begin{tabular}{c c c}
\multicolumn{1}{c}{}  & \multicolumn{2}{c}{allele type} \\
%\multicolumn{1}{c}{}   & \multicolumn{1}{c}{Case} & \multicolumn{1}{c}{Control}\\
 \cline{2-3}
 & major & minor\\
\hline
case & $a$ & $\frac{n}{2} - a$\\
\hline
control & $a$ & $\frac{n}{2} - a$\\
\hline
    \end{tabular}
%\hfill
\hspace{0.1\linewidth}
\centering
    \begin{tabular}{c c c}
\multicolumn{1}{c}{}  & \multicolumn{2}{c}{allele type} \\
 \cline{2-3}
 & major & minor\\
\hline
case & $a+1$ & $\frac{n}{2} - a - 1$\\
\hline
control & $a$ & $\frac{n}{2} - a$\\
\hline
    \end{tabular}
\hfill\\[1em]
    \caption{Two neighboring allelic contingency tables.}\label{tab:cont}
  \end{table}
In the case of a fixed $a > 5$, one can show that the two $p$-values computed from the two tables neither differ by a negligible factor nor both tend to zero as $n \goto \infty$. This fact is elaborated in detail in the appendix.

\subsection{Developing PrivateBHq}
\label{sec:procedure}

The PrivateBHq procedure (Algorithm \ref{algo:pbh2}) is the sequential composition of Algorithm~\ref{algo:peel}, which we refer to as the peeling mechanism, denoted as \peel. In a little more detail, given (non-private) $p$-value functions $p_1, \ldots, p_m$ and a prescribed number of invocations $m' \le m$, PrivateBHq first applies Private Min $m'$ times to the logarithms of the $p$-values, ``peeling off'' and removing from further consideration the approximately smallest element with each new invocation of Private Min. These $m'$ pre-selected hypotheses are thought of as promising hypotheses. In particular, the number $m'$ as an upper bound on the total number of discoveries shall be much less than $m$. This recognizes that, in many application scenarios, much fewer are truly significant in an ocean of mediocre hypotheses. 

During the peeling procedure, in order to keep track of indices within the original set, \peel removes a function from further consideration by redefining it to be $+\infty$, ensuring that it will not be returned by future invocations of the Private Min. The Laplace noise scale $\lambda$ shall be chosen to adjust for the privacy protection target, factoring in the multiplicative sensitivities of $p_1, \ldots, p_m$ and the number of invocations $m'$.

\begin{algorithm}[ht]
\caption{Peeling Mechanism \peel}
\label{algo:peel}
\begin{algorithmic}[1]
\REQUIRE database $D$, functions $f_1,\dots,f_m$, number of invocations $m'$, and Laplace noise scale $\lambda$
%\INPUT database $D$, functions $f_1,\dots,f_m$, number of invocations $m'$, and Laplace noise scale $\lambda$
\ENSURE indices $i_1,\dots,i_{m'}$ and approximations to $f_{i_1}(D), \ldots, f_{i_{m'}}(D)$
%\OUTPUT indices $i_1,\dots,i_{m'}$ and approximations to $f_{i_1}(D), \ldots, f_{i_{m'}}(D)$
\FOR {$j = 1$ to $m'$}
%cd: made D and argument to \tilde f_{i_j}
  \STATE let  $(i_j, \tilde f_{i_j}(D)) $ be returned by Private Min applied to $(D,f_1,\dots,f_m)$ with Laplace noise scale $\lambda$
  \STATE set $f_{i_j} \equiv +\infty$ 
\ENDFOR
\STATE return the $m'$-tuple $\{(i_1, \tilde f_{i_1}(D)),\dots, (i_{m'},\tilde f_{i_{m'}}(D) )\}$
\end{algorithmic}
\end{algorithm}

With $m'$ hypotheses yielded by \peel in place, PrivateBHq supplies quantities in logarithmic scale instead of, in the conventional setting, the $m'$ raw $p$-values and critical values to the (step-up) BHq procedure. This difference however does not affect the way BHq proceeds. To be concrete, BHq first orders the noisy values $\tilde \pi_{i_1},\ldots, \tilde \pi_{i_{m'}}$ as $\tilde \pi_{(i_1)} \le \cdots \le \tilde \pi_{(i_{m'})}$, and then rejects any corresponding hypotheses if $\tilde\pi_{i_j}$ is below $\max\{\gamma_j: \tilde\pi_{(i_j)} \le \gamma_j\}$, with the convention that $\max\emptyset = -\infty$. As we will see in Section \ref{sec:appl-priv}, the cutoffs $\gamma_1, \ldots, \gamma_{m'}$ are chosen specifically to ensure FDR control of PrivateBHq; roughly speaking, $\gamma_j$ is slightly below the logarithm of the corresponding BHq critical value $qj/m$, where the gap between the two accounts for the multiplicative sensitivity of the $p$-values and the uncertainty brought by the Laplace mechanism.

%cd: made \epsilon,\delta,\eta,m' paraeters and indicated that \lambda is a function of all four
\begin{algorithm}[ht]
\caption{The \PBH~Procedure}\label{algo:pbh2}
\begin{algorithmic}[1]
\REQUIRE database $D$, parameters $\epsilon,\delta,\eta,\nu$, $(\eta, \nu)$-multiplicatively sensitive 
$p$-value functions $p_1, \dots, p_m$, number of invocations $m'$, Laplace noise scale $\lambda = \lambda(\epsilon,\delta,\eta,m')$ , and cutoffs $\gamma_1 < \cdots < \gamma_{m'}$
\ENSURE a set of up to $m'$ rejected hypotheses
\STATE set $\pi_j = \log \max\{\nu, p_j(D)\}$ for $1\leq j \leq m$
\STATE obtain $(i_1, \tilde \pi_{i_1}), \ldots, (i_{m'},\tilde \pi_{i_{m'}})$ by applying \peel to $\pi_1,\ldots, \pi_m$ with noise scale $\lambda$
%cd: CHECK: should the next line say step-DOWN?
%% wjs: both step-down and step-up work. They are both differentially
%% private and control FDR since they are both compliant.
\STATE apply (step-up) BHq to $\tilde \pi_{i_1},\ldots, \tilde \pi_{i_{m'}}$ with cutoffs $\gamma_1, \ldots, \gamma_{m'}$ 
\STATE return the indices of rejected hypotheses
\end{algorithmic}
\end{algorithm}

\subsection{Preserving privacy}
\label{sec:preserving-privacy}

The proof that PrivateBHq is differentially private relies on the fact that the algorithm only accesses the data through the values returned by \peel.
%, we recognize that this procedure is a composition of \peel and BHq. 
Thus, intuitively, the final results reported by BHq shall release no more privacy than the intermediate results yielded by \peel. This intuition is indeed true, that is, differential privacy is closed under {\em post-processing}, as shown by the following lemma.

\begin{lemma}[\cite{DworkKMMN06, wasserman2010}]\label{lm:compo}
Let $\mathcal M$ be an $(\epsilon, \delta)$-differentially private mechanism and $g$ be any (measurable) function. Then $g(\mathcal M)$ also preserves $(\epsilon, \delta)$-differential privacy.
\end{lemma}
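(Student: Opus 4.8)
The plan is to establish this post-processing lemma by the standard preimage argument, reducing every event concerning $g(\mathcal{M})$ to an event concerning $\mathcal{M}$ itself. Fix adjacent databases $D$ and $D'$ and an arbitrary measurable event $T \subseteq \range(g(\mathcal{M}))$. First I would observe that, because $g$ is a deterministic map, the event $\{g(\mathcal{M}(D)) \in T\}$ coincides with $\{\mathcal{M}(D) \in S\}$, where $S := g^{-1}(T) = \{s \in \range(\mathcal{M}) : g(s) \in T\}$ is the preimage of $T$ under $g$. Since $g$ is measurable and $T$ is measurable, $S$ is a measurable subset of $\range(\mathcal{M})$, and hence a legitimate event to which the differential privacy guarantee of $\mathcal{M}$ applies via Definition \ref{def:dp}.

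With this identification in hand, the result follows by simply invoking the $(\epsilon,\delta)$-differential privacy of $\mathcal{M}$ on the event $S$:
\[
\P(g(\mathcal{M}(D)) \in T) = \P(\mathcal{M}(D) \in S) \le \mathrm{e}^{\epsilon}\, \P(\mathcal{M}(D') \in S) + \delta = \mathrm{e}^{\epsilon}\, \P(g(\mathcal{M}(D')) \in T) + \delta,
\]
where the outer equalities merely re-expand the preimage and the middle inequality is the defining property of $\mathcal{M}$. As $D$, $D'$, and $T$ were arbitrary, this is precisely the assertion that $g(\mathcal{M})$ preserves $(\epsilon,\delta)$-differential privacy. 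If one wishes to cover a \emph{randomized} post-processing map $g$ whose internal randomness is independent of $\mathcal{M}$, I would condition on that randomness: for each fixed realization $g$ is deterministic and the displayed chain applies verbatim, after which averaging over the randomness of $g$ preserves the inequality by linearity of expectation, since the multiplicative factor $\mathrm{e}^{\epsilon}$ and the additive term $\delta$ are constants.

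I expect no genuine analytic obstacle here; the lemma is essentially a tautology once the preimage is formed. The only point requiring care is the measurability bookkeeping, namely ensuring that $g^{-1}(T)$ indeed lies in the $\sigma$-algebra on which the privacy guarantee for $\mathcal{M}$ is stated, so that Definition \ref{def:dp} may be applied to $S$. In the randomized extension, the sole additional subtlety is justifying the interchange of the averaging over $g$'s randomness with the privacy inequality, which is immediate under the independence assumption. Because this lemma is exactly the property that guarantees PrivateBHq inherits the privacy of \peel, its elementary nature is in fact the key structural fact underlying the unconditional privacy claim.
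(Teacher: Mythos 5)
Your proof is correct: the preimage argument $\P(g(\mathcal{M}(D)) \in T) = \P(\mathcal{M}(D) \in g^{-1}(T))$ followed by an application of Definition \ref{def:dp} to the measurable set $g^{-1}(T)$ is exactly the standard post-processing argument, and your remark on extending to randomized $g$ by conditioning and averaging is also sound. The paper itself supplies no proof of Lemma \ref{lm:compo}, citing it to prior work instead, and the argument given there is essentially the one you wrote.
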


This lemma implicitly assumes the range of the mechanism $\mathcal{M}$ falls into the domain of $g$. In our context, taking $g$ to be 
%cd: againm shouldn't this be step-DOWN?
%% wjs: step-up also works
step-up BHq, Lemma \ref{lm:compo} shows that it suffices to establish the differential privacy property of \peel. By construction, each $\pi_j$ has sensitivity no more than $\eta$. Lemma \ref{lm:pm_dp} then immediately ensures that the Private Min, which is invoked sequentially $m'$ times in PrivateBHq, guarantees on its own $(2\eta/\lambda, 0)$-differential privacy. Making use of the fact that, at worst ``$(\epsilon, \delta)$'s add up'' (see the discussion right below Lemma \ref{lm:pm_dp}), one can conclude that the peeling mechanism is $(2m' \eta/\lambda, 0)$-differentially private. Equivalently, to achieve $(\epsilon,0)$-differential privacy for \peel, and therefore also
for PrivateBHq, we can set the Laplace noise scale to be $\lambda = 2m'\eta/\epsilon$. In this way, the noise level grows linearly with $m'$. 

Surprisingly, we can trade a little bit of $\delta$ for a significant improvement on $\epsilon$, as shown by the lemma below.

\begin{lemma}[Advanced Composition~\cite{DworkRV10}]\label{lm:advancecomp}
For all $\eps, \delta \ge 0$ and $\delta' > 0$, running $l$ mechanisms sequentially that are each $(\eps, \delta)$-differentially private preserves $( \eps\sqrt{2l \log (1/\delta')} +  l\eps(\ep{\eps} -1), l\delta + \delta')$-differential privacy.
\end{lemma}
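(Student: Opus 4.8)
The plan is to prove this through the \emph{privacy loss random variable} together with a concentration inequality, the standard route to advanced composition. Fix adjacent databases $D,D'$ and, for a single mechanism $\mathcal M$, define the privacy loss at an output $o$ by $L(o) = \log \frac{\P(\mathcal M(D)=o)}{\P(\mathcal M(D')=o)}$, viewed as a random variable with $o$ drawn from $\mathcal M(D)$. The first ingredient is a tail characterization of approximate privacy: if $\P_{o\sim\mathcal M(D)}(L(o) > \eps') \le \delta''$ for all adjacent $D,D'$, then $\mathcal M$ is $(\eps',\delta'')$-differentially private. I would establish this by splitting any event $S$ according to whether $L \le \eps'$ or $L > \eps'$; the first part contributes at most $\ep{\eps'}\,\P(\mathcal M(D')\in S)$ and the second at most $\delta''$. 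Thus it suffices to control the upper tail of the \emph{total} privacy loss of the composed mechanism.

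Writing $L_{1:l} = \sum_{i=1}^l L_i$ for the loss accumulated over the $l$ sequential (possibly adaptive) mechanisms, the tail characterization reduces the theorem to showing that $L_{1:l}$ exceeds $\eps\sqrt{2l\log(1/\delta')} + l\eps(\ep{\eps}-1)$ with probability at most $l\delta+\delta'$. The $l\delta$ term I would extract first by reducing to the pure case: each $(\eps,\delta)$-DP mechanism is equivalent, up to a ``bad'' set of output probability at most $\delta$, to one whose privacy loss is bounded by $\eps$ in absolute value. Discarding these bad sets across all $l$ rounds costs $l\delta$ by a union bound and leaves an analysis in which $|L_i| \le \eps$ holds pointwise.

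For the pure part, two estimates drive the concentration. First, a change of measure gives $\E[L_i \mid o_1,\dots,o_{i-1}] = \E_{D'}[L_i \ep{L_i}]$; combined with $\E_{D'}[\ep{L_i}] = 1$ (so $\E_{D'}[L_i] \le 0$) and the pointwise bound $L_i(\ep{L_i}-1) \le \eps(\ep{\eps}-1)$ valid on $[-\eps,\eps]$, this yields $\E[L_i \mid \text{past}] \le \eps(\ep{\eps}-1)$, so the mean of $L_{1:l}$ is at most $l\eps(\ep{\eps}-1)$. Second, because the increments $L_i - \E[L_i \mid \text{past}]$ form a martingale difference sequence with range at most $2\eps$, the Azuma--Hoeffding inequality gives $\P\big(L_{1:l} - l\eps(\ep{\eps}-1) > t\big) \le \ep{-t^2/(2l\eps^2)}$. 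Setting $t = \eps\sqrt{2l\log(1/\delta')}$ makes this at most $\delta'$; adding the $l\delta$ from the reduction yields exactly the claimed $\big(\eps\sqrt{2l\log(1/\delta')} + l\eps(\ep{\eps}-1),\, l\delta+\delta'\big)$ guarantee.

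The hardest part will be the bookkeeping around adaptivity rather than the concentration inequality itself. Because the $i$th mechanism, and hence the pair of output distributions defining $L_i$, may depend on the realized outputs $o_1,\dots,o_{i-1}$, both the pointwise bound $|L_i|\le\eps$ and the conditional-mean bound must be shown to hold uniformly over all histories, which is what makes $L_i - \E[L_i\mid\text{past}]$ a genuine martingale difference sequence and legitimizes Azuma. Equally delicate is the reduction to pure DP: the $\delta$-``defects'' arising in different rounds must be handled so that a single union bound produces the clean $l\delta$ term, which requires coupling the bad sets across rounds carefully. Once these measurability and coupling issues are settled, the remaining steps are routine.
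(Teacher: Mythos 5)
The paper does not prove this lemma at all: it is imported verbatim from the cited reference \cite{DworkRV10} and used as a black box, so there is no in-paper argument to compare yours against. Judged on its own terms, your sketch is the standard Dwork--Rothblum--Vadhan proof and the quantitative skeleton is right: the tail-to-DP conversion by splitting an event $S$ on $\{L\le \eps'\}$, the identity $\E_{D}[L]=\E_{D'}[L\ep{L}]$ with $\E_{D'}[\ep{L}]=1$ giving the per-round mean bound $\eps(\ep{\eps}-1)$, and Azuma applied to increments confined to an interval of length $2\eps$, which with $t=\eps\sqrt{2l\log(1/\delta')}$ yields exactly $\delta'$. These pieces compose into the stated parameters.

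The one place where your plan is genuinely thinner than you acknowledge is the reduction from $(\eps,\delta)$-DP to pure DP ``up to a bad set of probability $\delta$.'' It is \emph{not} true that $(\eps,\delta)$-DP implies $\P_{o\sim\mathcal M(D)}(L(o)>\eps)\le\delta$: the DP inequality only bounds the excess mass $\sum_{o\in B}\left[\P(\mathcal M(D)=o)-\ep{\eps}\P(\mathcal M(D')=o)\right]$ by $\delta$, and the set $B=\{L>\eps\}$ can carry much more than $\delta$ probability while each point exceeds the $\ep{\eps}$ ratio only slightly. The correct fix is the decomposition lemma of \cite{DworkRV10} (or its later refinements), which replaces each round's pair of output distributions by a coupled pair that is pointwise $\eps$-indistinguishable and within total variation $\delta$ of the originals; the $l\delta$ term then comes from a union bound over these couplings, not from conditioning on $\{|L_i|\le\eps\}$. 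You correctly flag this as the delicate step, but as written the ``equivalent up to a bad set'' claim is the assertion that actually needs proof, and filling it in is the only non-routine part of the argument. With that lemma supplied, the rest of your outline (uniformity of the bounds over adaptive histories, the martingale structure, Azuma) goes through as you describe.
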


This lemma holds no matter how each mechanism adaptively depends on information released by prior mechanisms. Taking $\delta = 0$ in Lemma \ref{lm:advancecomp}, we easily obtain the main theorem of this section, with its proof deferred to the appendix. This theorem shows adding Laplace noise with scale of order roughly $O(\sqrt{m'})$ is sufficient for protecting privacy of PrivateBHq.
\begin{theorem}\label{thm:main_pri}
Let $\eta,\nu$ be chosen so that all the $p$-value functions input to PrivateBHq are $(\eta, \nu)$-sensitive. Given $\epsilon \le 0.5, \delta \le 0.1$ and $m' \ge 10$, PrivateBHq with Laplace noise scale $\lambda = \eta\sqrt{10m'\log(1/\delta)}/\e$, or larger, is $(\e,\delta)$-differentially private.
\end{theorem}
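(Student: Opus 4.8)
The plan is to reduce the privacy analysis of the full \PBH{} procedure to that of its peeling subroutine, and then to quantify the cumulative privacy loss of the peeling via the Advanced Composition Theorem. The key structural observation is that \PBH{} accesses the database $D$ only through the noisy values $(i_1,\tilde\pi_{i_1}),\ldots,(i_{m'},\tilde\pi_{i_{m'}})$ returned by \peel: the subsequent step-up BHq with cutoffs $\gamma_1,\ldots,\gamma_{m'}$ is a deterministic, hence measurable, function of these values. Consequently, by the closure of differential privacy under post-processing (Lemma \ref{lm:compo}), it suffices to prove that \peel alone is $(\epsilon,\delta)$-differentially private.

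To analyze \peel, I would first record the sensitivity bound established in Section \ref{sec:prelim:sensitivity}: each statistic $\pi_j = \log\max\{\nu, p_j(D)\}$ has additive sensitivity at most $\eta$. Thus a single invocation of Private Min on $\pi_1,\ldots,\pi_m$ with Laplace noise scale $\lambda$ is, by Lemma \ref{lm:pm_dp} with $\Delta=\eta$ (so that the internal noise $\lap(2\Delta/\epsilon_0)$ matches $\lap(\lambda)$ exactly when $\epsilon_0=2\eta/\lambda$), precisely $(2\eta/\lambda,0)$-differentially private. Writing $\epsilon_0 := 2\eta/\lambda$ for this per-invocation parameter, the peeling mechanism is exactly the sequential composition of $m'$ mechanisms, each $(\epsilon_0,0)$-differentially private. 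I would then apply the Advanced Composition Theorem (Lemma \ref{lm:advancecomp}) with $l=m'$, per-mechanism parameters $(\epsilon_0,0)$, and the free slack parameter set to $\delta'=\delta$. This yields that \peel is $\big(\epsilon_0\sqrt{2m'\log(1/\delta)} + m'\epsilon_0(\ep{\epsilon_0}-1),\,\delta\big)$-differentially private, since the $\delta$-budget collapses to $m'\cdot 0 + \delta = \delta$.

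It then remains to substitute the prescribed noise scale $\lambda = \eta\sqrt{10m'\log(1/\delta)}/\epsilon$, which gives $\epsilon_0 = 2\epsilon/\sqrt{10m'\log(1/\delta)}$, and to verify that the resulting $\epsilon$-budget is at most $\epsilon$. The leading term is clean and dimension-free: $\epsilon_0\sqrt{2m'\log(1/\delta)} = 2\epsilon/\sqrt5 \approx 0.894\,\epsilon$, independently of $m'$ and $\delta$. The leftover slack $(1-2/\sqrt5)\epsilon \approx 0.106\,\epsilon$ must absorb the second-order term $m'\epsilon_0(\ep{\epsilon_0}-1)$. Using the monotonicity of $x\mapsto(\ep{x}-1)/x$ together with the uniform bound $\epsilon_0 \le 1/\sqrt{100\log 10}\approx 0.066$ (which follows from $\epsilon\le 0.5$, $m'\ge 10$, and $\log(1/\delta)\ge\log 10$), one gets $\ep{\epsilon_0}-1 \le 1.04\,\epsilon_0$, so this term is at most $1.04\,m'\epsilon_0^2 = \tfrac{2}{5}\cdot 1.04\,\epsilon^2/\log(1/\delta)$, which is $O(\epsilon^2/\log(1/\delta))$. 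Notice this confirms the headline point of the theorem: $\lambda$ grows only like $\sqrt{m'}$ rather than linearly in $m'$.

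The main—and essentially only—delicate obstacle is this final numerical verification: showing that $\tfrac{2}{5}\cdot 1.04\,\epsilon^2/\log(1/\delta)$ stays below the leftover slack $(1-2/\sqrt5)\epsilon \approx 0.106\,\epsilon$. This reduces to the inequality $\epsilon \le 0.256\,\log(1/\delta)$, and the hypotheses $\epsilon\le 0.5$ and $\delta\le 0.1$ are calibrated precisely so that $0.256\log(1/\delta)\ge 0.256\log 10 > 0.58 > 0.5 \ge \epsilon$, which closes the argument. I expect the bulk of the write-up to consist of these constant-chasing estimates rather than any conceptual difficulty, since the architecture—post-processing to reduce to \peel, followed by Advanced Composition with $\delta'=\delta$—is forced once one identifies that BHq touches the data only through \peel's output.
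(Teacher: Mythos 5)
Your proposal is correct and follows essentially the same route as the paper's proof: post-processing reduces the claim to \peel, each Private Min invocation is $(2\eta/\lambda,0)$-differentially private via Lemma \ref{lm:pm_dp} with $\Delta=\eta$, and Advanced Composition with $\delta'=\delta$ gives the budget $2\epsilon/\sqrt5 + m'\epsilon_0(\ep{\epsilon_0}-1)\le\epsilon$ after the same constant-chasing (the paper bounds $\epsilon_0\le 0.0659$ and $\ep{\epsilon_0}-1\le 1.034\,\epsilon_0$, landing at $0.0899\,\epsilon$ for the second-order term). Your minor numerical constants ($1.04$, $0.256$) differ negligibly from the paper's and do not affect the conclusion.
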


We remark that the constraints on $\epsilon, \delta$, and $m'$ are used to optimize the constants for practical use. 

%%% Local Variables:
%%% mode: latex
%%% TeX-master: "paper_annals"
%%% End:

\section{Proving FDR Control Using a Submartingale}
\label{sec:robust}

The main purpose of this section is to prove Theorem \ref{thm:robustfdr}. The proof strategy contains two novel elements: an upper bound on $\fdr_k$ involving only true null $p$-values (Equation \eqref{eq:fdp_main} below) and a backward submartingale that allows us to use a martingale maximal inequality. In addition, this section attempts to obtain the optimal constant $C_k$ for Theorem \ref{thm:robustfdr} in Section \ref{sec:sharpening-constants}, where we give some intuition behind Theorem \ref{thm:optimal}, and considers a new variant of the FDR in Section \ref{sec:fdr-contr-large}.

Throughout the section, we focus on an arbitrary BHq-compliant procedure. That is, any $p$-value rejected by the procedure is not greater than $qR/m$, where $R$ denotes the total number of rejections.

\subsection{Controlling $\fdr_k$}
\label{sec:contr-fdr-fdr_2}

In this subsection, we prove Theorem \ref{thm:robustfdr}. However, the proof presented here does not seek to optimize the constant $C_k$ in Theorem \ref{thm:robustfdr}. We consider
\[
\fdp_k := \frac{V \mathbf{1}_{V \ge k}}{R},
\]
which gives $\fdr_k \equiv \E \fdp_k$ by taking expectation. The following upper bound on the $\fdp_k$ for $k \ge 2$ of the \BH-compliant procedure serves as the basis for our analysis:
\begin{equation}\label{eq:fdp_main}
\fdp_k \le \max_{k \le j \le m_0} \frac{q j}{m p^0_{(j)}}.
\end{equation}
Above, $p^0_{(1)} \le p^0_{(2)} \le \cdots \le p^0_{(m_0)}$ are the order statistics of the $m_0$ true null $p$-values. To prove \eqref{eq:fdp_main}, denote by $V$ the number of false rejections. If $V \le k - 1$, \eqref{eq:fdp_main} holds since $\fdp_k = 0$. Otherwise, the largest rejected true null $p$-value is at least $p^0_{(V)}$ and, therefore, one must have $p^0_{(V)} \le q R/m$ due to the compliance condition. As a consequence, we get
\begin{equation}\label{eq:fdp_k_bound}
\fdp_k  = \frac{V}{R} \le \frac{V}{m p_{(V)}^0/q} \le \max_{k \le j \le m_0} \frac{q j}{m p^0_{(j)}}.
\end{equation}
The IWN condition imposed in Theorem \ref{thm:robustfdr} ensures the joint independence of the true null $p$-values, each of which is, by definition, stochastically larger than or equal to $U(0, 1)$. Thus, the ordered true null $p$-values can be replaced by the order statistics $U_{(1)} \le U_{(2)} \le \cdots \le U_{(m_0)}$ of $m_0$ \iid~uniform random variables on $(0, 1)$, while \eqref{eq:fdp_k_bound} remains true in the expectation sense (recall that $\truenull_0 = m_0/m$):
\begin{equation}\nonumber
\fdr_k \le \E \left[ \max_{k \le j \le m_0}\frac{q j}{m U_{(j)}} \right] = q \truenull_0\E\left[ \max_{k \le j \le m_0}\frac{j}{m_0 U_{(j)}} \right].
\end{equation}
Therefore, Theorem \ref{thm:robustfdr} follows from the lemma below.
\begin{lemma}\label{lm:fdr_2_term}
Let $U_{(1)} \le \cdots \le U_{(n)}$ denote the order statistics of $n$ \iid~uniform variables on $(0, 1)$. There exists an absolute constant $c_k$ such that
\begin{equation}\nonumber
\sup_{n \ge k}\E \left[ \max_{k \le j \le n} \frac{j}{n U_{(j)}} \right] \le c_k
\end{equation}
for $k \ge 2$.
\end{lemma}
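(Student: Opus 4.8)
The plan is to bound the expectation by integrating its tail probability and to control the tail by a union bound over the index $j$. Writing $M_n := \max_{k \le j \le n} j/(n U_{(j)})$, I would start from the layer-cake formula $\E[M_n] = \int_0^\infty \P(M_n > t)\,\d t$ and observe that $\{M_n > t\}$ is exactly the event that $U_{(j)} < j/(nt)$ for some $k \le j \le n$. A union bound then gives $\P(M_n > t) \le \sum_{j=k}^n \P\big(U_{(j)} < j/(nt)\big)$, so the whole problem reduces to finding an upper tail estimate for each order statistic that is summable in $j$ and, crucially, \emph{uniform in $n$}.

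The key per-term estimate I would use is the elementary bound $\P(U_{(j)} \le x) \le \binom{n}{j} x^j$, which follows by noting that $\{U_{(j)} \le x\}$ is the event that some $j$-subset of the $n$ samples lies entirely in $[0,x]$ and union-bounding over the $\binom{n}{j}$ subsets. Substituting $x = j/(nt)$ produces the decisive cancellation of the factor $n^j$: using $\binom{n}{j} \le n^j/j!$ together with $j! \ge (j/\mathrm{e})^j$, I obtain
\[
\P\big(U_{(j)} \le j/(nt)\big) \le \frac{n^j}{j!}\left(\frac{j}{nt}\right)^j = \frac{j^j}{j!\,t^j} \le \left(\frac{\mathrm{e}}{t}\right)^j,
\]
a bound entirely free of $n$.

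Plugging this in, for $t > \mathrm{e}$ I would sum the geometric series to get $\P(M_n > t) \le \sum_{j \ge k}(\mathrm{e}/t)^j = (\mathrm{e}/t)^k/(1 - \mathrm{e}/t)$, and then split the integral as $\int_0^\infty \P(M_n > t)\,\d t \le 2\mathrm{e} + \int_{2\mathrm{e}}^\infty 2(\mathrm{e}/t)^k\,\d t$, where the first term bounds the contribution of $t \le 2\mathrm{e}$ trivially and the second uses $1 - \mathrm{e}/t \ge 1/2$ on $t \ge 2\mathrm{e}$. The tail integral is finite precisely because $k \ge 2$, since $\int_{2\mathrm{e}}^\infty t^{-k}\,\d t$ converges if and only if $k > 1$; this is the finite-$n$ shadow of the fact that $C_1 = \infty$ while $C_k < \infty$ for $k \ge 2$ in \eqref{eq:c_k_intro}. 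Evaluating gives an explicit, $n$-independent constant $c_k$, completing the proof.

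The main obstacle, and really the crux of the argument, is obtaining a per-term tail estimate that decays geometrically in $j$ yet is uniform in $n$; everything hinges on the exact cancellation of $n^j$ when $x = j/(nt)$, after which summability in $j$ and integrability in $t$ (the $k \ge 2$ threshold) follow routinely. I would not try to optimize $c_k$ here, as the sharp constant $C_k$ is the subject of the later refinement in Section~\ref{sec:sharpening-constants}; for this lemma any finite $c_k$ suffices.
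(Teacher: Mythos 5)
Your proposal is correct, but it takes a genuinely different route from the paper. The paper proves this lemma by passing to the representation $U_{(j)} \overset{d}{=} T_j/T_{n+1}$ with exponential partial sums $T_j$, showing that $W_j = jT_{n+1}/T_j$ is a \emph{backward submartingale} (Lemma \ref{lm:submartingale}), and then invoking the $\ell_1$ martingale maximal inequality (Lemma \ref{lm:maximal}) to reduce the maximum over $j$ to a single $L\log L$ moment of $k/(nU_{(k)})$, which is bounded uniformly in $n$ via the $\mathrm{Beta}(k,n+1-k)$ density. Your argument instead uses the layer-cake formula, a union bound over $j$, and the elementary estimate $\P(U_{(j)}\le x)\le\binom{n}{j}x^j$; the substitution $x=j/(nt)$ cancels $n^j$ exactly and yields the $n$-free geometric tail $(\mathrm{e}/t)^j$, after which summability in $j$ and integrability in $t$ for $k\ge 2$ give an explicit constant such as $2\mathrm{e}+2^{2-k}\mathrm{e}/(k-1)$. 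Each step of your argument checks out, including the correct identification of $k\ge 2$ as the integrability threshold mirroring $C_1=\infty$. What each approach buys: yours is entirely elementary (no martingale machinery) and produces a closed-form, if loose, constant; the paper's submartingale structure is heavier here but is not single-use --- it is recycled for the uniform integrability argument (Lemma \ref{lm:uniform_int}, via Doob's $\ell^2$ inequality) that justifies the limit interchange leading to the sharp constant $C_k$ in \eqref{eq:ck_expe}, and an analogous backward-martingale argument drives Theorem \ref{thm:stronglyrobustfdr}. So your proof is a valid and self-contained substitute for this lemma, though it would not by itself support the later optimization of the constants.
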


The proof of this lemma  starts by recognizing a well-known representation in law for uniform order statistics:
\begin{equation}\label{eq:u_t_define}
(U_{(1)}, \ldots, U_{(n)}) \overset{d}{=} \left( \frac{T_1}{T_{n+1}}, \ldots, \frac{T_n}{T_{n+1}} \right),
\end{equation}
where $T_j = \xi_1 + \cdots + \xi_j$ and $\xi_1, \ldots, \xi_{n+1}$ are \iid~exponential random
variables with mean 1. Writing
\[
W_j = \frac{jT_{n+1}}{T_j},
\]
Lemma \ref{lm:fdr_2_term} is equivalent to showing
\begin{equation}\label{eq:fdr_eqv}
\E \left[ \max_{k \le j \le n} \frac{W_j}{n} \right] \le c_k.
\end{equation}
Intuitively, the maximum is likely to be attained at some small index $j$ as $W_j/n$ is close to 1 for a large value of $j$, due to the law of large numbers. This intuition can be indeed made rigorous by the fact that $W_1, \ldots, W_{n+1}$ is a backward submartingale, as shown by the following lemma.

\begin{lemma}\label{lm:submartingale}
With respect to the filtration $\mathcal{F}_j := \sigma(T_j,
T_{j+1},\ldots,T_{n+1})$ for $j=1, \ldots, n+1$, the stochastic
process $W_1, \ldots, W_{n+1}$ is a backward submartingale. That is, $\E(W_j|\mathcal{F}_{j+1}) \geq W_{j+1}$ for $j=1, \ldots, n$.
\end{lemma}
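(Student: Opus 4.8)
The plan is to reduce the backward submartingale inequality to a one-dimensional computation about the conditional law of $T_j$ given $\mathcal{F}_{j+1}$. First I would observe that $W_{j+1} = (j+1)\,T_{n+1}/T_{j+1}$ is $\mathcal{F}_{j+1}$-measurable and that both $T_{n+1}$ and $T_{j+1}$ are known given $\mathcal{F}_{j+1}$, so that $\E(W_j \mid \mathcal{F}_{j+1}) = j\,T_{n+1}\,\E(1/T_j \mid \mathcal{F}_{j+1})$. Writing $1/T_j = (T_{j+1}/T_j)\cdot(1/T_{j+1})$, the entire problem collapses to identifying the conditional distribution of the ratio $R := T_j/T_{j+1}$ given $\mathcal{F}_{j+1}$.

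The key probabilistic fact is that this conditional law is free of the realized values of $T_{j+1},\ldots,T_{n+1}$. Indeed, given $T_{j+1} = s$ the normalized increments $(\xi_1/s,\ldots,\xi_{j+1}/s)$ are uniformly distributed on the simplex (the standard fact that \iid exponentials conditioned on their sum form a uniform $\mathrm{Dirichlet}(1,\ldots,1)$ vector), while the later increments $\xi_{j+2},\ldots,\xi_{n+1}$ are independent of $(\xi_1,\ldots,\xi_{j+1})$; hence conditioning on the whole $\sigma$-algebra $\mathcal{F}_{j+1}$ reduces, for the law of $T_j$, to conditioning on $T_{j+1}$ alone. Then $R = \sum_{i \le j}\xi_i/s$ is the sum of $j$ of the $j+1$ Dirichlet coordinates, which is $\mathrm{Beta}(j,1)$ regardless of $s$, and a direct integration gives $\E(1/R) = \int_0^1 r^{-1}\, j r^{j-1}\,\d r = j/(j-1)$ for $j \ge 2$.

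Putting these together yields $\E(W_j \mid \mathcal{F}_{j+1}) = \frac{j}{j-1}\cdot\frac{j\,T_{n+1}}{T_{j+1}} = \frac{j^2}{j^2-1}\,W_{j+1} \ge W_{j+1}$, since $\frac{j^2}{j-1} \ge j+1$ is equivalent to $j^2 \ge j^2-1$. This proves the claimed inequality for every $j \ge 2$, and incidentally shows $\E W_j < \infty$ there, so that $W_2,\ldots,W_{n+1}$ is a genuine integrable backward submartingale, which is all the application in \eqref{eq:fdr_eqv} with $k \ge 2$ requires.

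I expect the one subtlety, rather than a real obstacle, to be the boundary index $j=1$: here $W_1 = T_{n+1}/\xi_1$ with $\E(1/\xi_1) = +\infty$, so $W_1$ is not integrable and $\E(W_1 \mid \mathcal{F}_2) = +\infty$; the inequality $\E(W_1 \mid \mathcal{F}_2) \ge W_2$ still holds trivially in the extended-valued sense, and I would simply flag this convention since it never enters the maximal-inequality step downstream. The genuinely delicate point to state carefully is the conditional-distribution claim—making precise that conditioning on all of $\mathcal{F}_{j+1}$ is equivalent to conditioning on $T_{j+1}$ for the purpose of the law of $R$—but this follows cleanly from the independence of disjoint blocks of increments.
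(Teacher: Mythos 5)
Your proof is correct, but it takes a genuinely different route from the paper's. The paper never identifies the conditional law of $T_j$ given $\mathcal{F}_{j+1}$: it instead shows, by a one-line exchangeability argument ($\E(\xi_l\mid\mathcal{F}_{j+1})=\E(\xi_k\mid\mathcal{F}_{j+1})$ for $l,k\le j+1$), that the reciprocal process $W_j^{-1}=T_j/(jT_{n+1})$ is a backward \emph{martingale}, and then obtains $\E(W_j\mid\mathcal{F}_{j+1})\ge 1/\E(W_j^{-1}\mid\mathcal{F}_{j+1})=W_{j+1}$ from conditional Jensen. You instead pin down the conditional distribution exactly: by the Dirichlet representation of exponentials conditioned on their sum, $T_j/T_{j+1}\sim\mathrm{Beta}(j,1)$ independently of $\mathcal{F}_{j+1}$, which yields the identity $\E(W_j\mid\mathcal{F}_{j+1})=\tfrac{j^2}{j^2-1}\,W_{j+1}$ for $j\ge2$. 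Your computation is right (the sum of $j$ of the $j+1$ uniform-simplex coordinates is indeed $\mathrm{Beta}(j,1)$, and $\E(1/R)=j/(j-1)$), and your reduction of conditioning on all of $\mathcal{F}_{j+1}$ to conditioning on $T_{j+1}$ alone is justified by independence of the increment blocks. What your route buys is the exact multiplicative drift $\tfrac{j^2}{j^2-1}$ (strictly more than submartingale) and explicit integrability of $W_j$ for $j\ge2$; what the paper's route buys is brevity --- no distributional identification is needed, only symmetry plus Jensen --- and a statement that formally covers $j=1$ in one stroke. Your handling of the $j=1$ boundary case ($\E(1/\xi_1)=+\infty$, so the inequality holds in the extended-valued sense and is irrelevant downstream since the application takes $k\ge2$) is accurate and is a point the paper glosses over.
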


The proof of Lemma \ref{lm:submartingale} is deferred to the appendix. Next, we apply this lemma to prove \eqref{eq:fdr_eqv} (hence Lemma \ref{lm:fdr_2_term} follows immediately) using the following martingale maximal inequality (for a
proof, see pages 71--73 of \cite{neveu1975discrete}).

\begin{lemma}[$\ell_1$ Martingale Maximal Inequality]\label{lm:maximal}
Let $X_1, \ldots, X_n$ be a (forward) submartingale. Then,
\[
\E\left( \max_{1 \le j \le n} X_j \right) \le \frac{\ep{}}{\ep{} - 1} \left[ 1 + \E\left( X_n \log X_n; X_n \ge 1 \right)  \right].
\]
\end{lemma}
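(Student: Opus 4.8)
The statement is the classical Doob $L\log L$ (equivalently, $L^1$) maximal inequality, and the plan is to reconstruct its standard proof. Since the process to which it is applied here ($W_1,\dots,W_{n+1}$) is nonnegative, I would first reduce to the case of a nonnegative submartingale; replacing each $X_j$ by $X_j^+$ leaves it a submartingale and does not decrease $\max_j X_j$, so no generality is lost. Write $X^\ast = \max_{1\le j\le n} X_j$. The first ingredient is Doob's weak-type maximal inequality: for every $\lambda>0$,
\[
\lambda\,\P(X^\ast \ge \lambda) \le \E\big[X_n\,\mathbf{1}\{X^\ast \ge \lambda\}\big].
\]
I would prove this through the first-passage time $\tau=\min\{j:X_j\ge\lambda\}$ (set to $n$ if no such index exists): since $\{\tau=j\}\in\mathcal{F}_j$ and $\E[X_n\mid\mathcal{F}_j]\ge X_j$ with $X_j\ge\lambda$ on the relevant event, we get $\E[X_n\,\mathbf{1}\{\tau=j,\,X_j\ge\lambda\}]\ge\lambda\,\P(\tau=j,\,X_j\ge\lambda)$, and summing over $j$ yields the displayed bound since $\{X^\ast\ge\lambda\}$ is the disjoint union of the events $\{\tau=j\}$ on which $X_j\ge\lambda$.

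Next I would convert this tail estimate into an $L^1$ bound via the layer-cake identity $\E[X^\ast]=\int_0^\infty\P(X^\ast\ge\lambda)\,\d\lambda$. Splitting the integral at $\lambda=1$, bounding the contribution of $[0,1]$ by $1$, and substituting the weak-type inequality on $[1,\infty)$ gives, by Tonelli,
\[
\E[X^\ast] \le 1 + \int_1^\infty \frac{1}{\lambda}\,\E\big[X_n\,\mathbf{1}\{X^\ast\ge\lambda\}\big]\,\d\lambda = 1 + \E\Big[X_n\int_1^\infty\frac{\mathbf{1}\{X^\ast\ge\lambda\}}{\lambda}\,\d\lambda\Big] = 1 + \E\big[X_n\log X^\ast;\,X^\ast\ge1\big].
\]
The right-hand side still involves $\log X^\ast$ rather than the desired $\log X_n$, and removing this mismatch is the crux of the argument.

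The key device is a Young/Fenchel inequality, $p\log q\le p\log p + q/\mathrm{e}$ for all $p,q\ge0$ (obtained by maximizing $p\log q-q/\mathrm{e}$ over $q$, the maximum occurring at $q=p\mathrm{e}$). Taking $p=X_n$ and $q=X^\ast$ and integrating over $\{X^\ast\ge1\}$, together with the pointwise bound $X_n\log X_n\,\mathbf{1}\{X^\ast\ge1\}\le X_n\log X_n\,\mathbf{1}\{X_n\ge 1\}$, gives
\[
\E\big[X_n\log X^\ast;\,X^\ast\ge1\big] \le \E\big[X_n\log X_n;\,X_n\ge1\big] + \frac{1}{\mathrm{e}}\,\E[X^\ast].
\]
Substituting this back produces $\E[X^\ast]\le 1+\E[X_n\log X_n;\,X_n\ge1]+\tfrac{1}{\mathrm{e}}\,\E[X^\ast]$, and rearranging yields the claimed constant $\mathrm{e}/(\mathrm{e}-1)$.

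The main obstacle is the final rearrangement, which is only legitimate once $\E[X^\ast]<\infty$; otherwise moving $\tfrac{1}{\mathrm{e}}\E[X^\ast]$ across the inequality is meaningless. I would circumvent this by first carrying out the entire argument with $X_j$ replaced by the bounded submartingale $X_j\wedge N$, obtaining a bound on $\E[X^\ast\wedge N]$ that is uniform in $N$, and then letting $N\to\infty$ by monotone convergence. The remaining points -- the reduction to the nonnegative case and the application of Tonelli, which is automatic since the integrand is nonnegative -- are routine.
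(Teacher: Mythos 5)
The paper does not prove this lemma at all---it is quoted from Neveu (pp.~71--73 of \cite{neveu1975discrete})---so the only question is whether your reconstruction of the classical Doob $L\log L$ argument is sound. The main chain is: the reduction to a nonnegative submartingale via $X_j^+$ (which leaves both sides of the inequality intact), the weak-type bound $\lambda\,\P(X^\ast\ge\lambda)\le\E[X_n\mathbf{1}\{X^\ast\ge\lambda\}]$ via the first-passage time, the layer-cake integration split at $\lambda=1$, the inequality $p\log q\le p\log p+q/\mathrm{e}$, and the final rearrangement. All of these steps are correct, and this is exactly the standard proof the paper is implicitly invoking.

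The one step that does not work as written is your proposed repair of the integrability issue: $X_j\wedge N$ is in general \emph{not} a submartingale, because $x\mapsto x\wedge N$ is concave increasing, and concave increasing functions preserve supermartingales, not submartingales. (Concretely, take $X_1=1$ and $X_2\in\{0,2\}$ with equal probability; this is a martingale, yet $\E[X_2\wedge 1]=\tfrac12<1=X_1\wedge 1$.) So the truncated process cannot be fed back into your argument. Fortunately the issue you were guarding against does not arise: a submartingale has integrable terms by definition, and the index set is finite, so $\E[X^\ast]\le\sum_{j=1}^n\E[X_j^+]<\infty$ automatically. Hence the rearrangement that isolates $\E[X^\ast]$ is always legitimate and no truncation is needed. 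With that last paragraph deleted (or replaced by this one-line observation), your proof is complete.
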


\begin{proof}[Proof of Lemma \ref{lm:fdr_2_term}]
Since Lemma \ref{lm:submartingale} asserts that $W_j/n$ is a backward submartingale, Lemma \ref{lm:maximal} concludes
\begin{align*}
\E \left( \max_{k \le j \le n} \frac{W_j}{n} \right) &\le \frac{\ep{}}{\ep{} - 1} \left[1 + \E\left(\frac{W_k}{n}\log \frac{W_k}{n}; \frac{W_k}{n} \geq 1 \right) \right] \\
&= \frac{\ep{}}{\ep{} - 1} \left[1 + \E\left(\frac{k}{nU_{(k)}}\log \frac{k}{nU_{(k)}}; \frac{k}{nU_{(k)}} \geq 1\right) \right].
\end{align*}
To complete the proof, it suffices to show that for a fixed $k$ the expectation above involving $k/(n U_{(k)})$ is uniformly bounded for all $n \ge k$. To this end, observe that $U_{(k)}$ is distributed as $\mathrm{Beta}(k, n + 1 -k)$, and this allows us to evaluate the expectation as
\[
\begin{aligned}
\E\left(\frac{k}{nU_{(k)}}\log \frac{k}{nU_{(k)}}; \frac{k}{nU_{(k)}} \geq 1\right) &= \int_0^{\frac{k}{n}}\frac{x^{k-1}(1-x)^{n-k}}{\mathrm{B}(k, n+1-k)}\frac{k}{n x}\log\frac{k}{n x} \, \d x \\
&\le \int_0^{\frac{k}{n}}\frac{x^{k-1}}{\mathrm{B}(k, n+1-k)}\frac{k}{n x}\log\frac{k}{n x} \, \d x \\
&= \frac1{n^k \mathrm{B}(k, n+1-k)}\int_0^k k y^{k-2} \log\frac{k}{y} \, \d y \\
&= \frac1{n^k \mathrm{B}(k, n+1-k)} \cdot \frac{k^k}{(k-1)^2}.
\end{aligned}
\]
To obtain an upper bound that is independent of $n$, it suffices to show that $n^k \mathrm{B}(k, n+1-k)$ has a lower bound depending only on $k$. Indeed, this is the case:
\[
\begin{aligned}
n^k \mathrm{B}(k, n+1-k) &= n^k \frac{\Gamma(k)\Gamma(n+1-k)}{\Gamma(n+1)}\\
&= \frac{n^k (k-1)!}{n(n-1) \cdots (n-k+1)} \ge (k-1)!.
\end{aligned}
\]

\end{proof}

\subsection{Optimizing the bounds}
\label{sec:sharpening-constants}
The constant $C_k$ in Theorem \ref{thm:robustfdr} matters from a practical perspective. This section is aimed at finding the \textit{optimal} constants for all $k \ge 2$. Compared with what has been performed in Section \ref{sec:contr-fdr-fdr_2}, this improvement is based on a delicate property about the expectation in \eqref{eq:fdr_eqv}, as detailed by the following lemma.
\begin{lemma}\label{lm:d_monotone}
Define $C_k^{(n)} = \E \left[ \max_{k \le j \le n} \frac{j}{n U_{(j)}}
\right]$ for $n \ge k \ge 2$, where $U_{(j)}$'s are the order statistics of $n$
\iid~uniform variables on $(0, 1)$. Then, $C_k^{(n)} \le C_k^{(n+1)}$.
\end{lemma}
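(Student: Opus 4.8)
The plan is to exploit the self-similar structure of uniform order statistics: I would peel off the largest of the $n+1$ samples and reduce the $(n+1)$-sample quantity to the $n$-sample one, conditionally on that maximum. First I would recall the standard fact that, writing $U_{(1)} \le \cdots \le U_{(n+1)}$ for the order statistics of $n+1$ \iid~uniforms, conditionally on $U_{(n+1)} = u$ the remaining $(U_{(1)}, \ldots, U_{(n)})$ have the law of $u$ times the order statistics of $n$ independent uniforms on $(0,1)$. The crucial point is that the rescaled vector $V_{(j)} := U_{(j)}/u$, $1 \le j \le n$, has a law that does not depend on $u$, hence $(V_{(1)}, \ldots, V_{(n)})$ is independent of $U_{(n+1)}$. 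Setting $M := \max_{k \le j \le n} j/(n V_{(j)})$, this $M$ has exactly the distribution whose mean is $C_k^{(n)}$, and it is independent of $U_{(n+1)}$.

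With this coupling in hand I would split the defining maximum into the indices $k \le j \le n$ and the single extra index $j = n+1$. The first block equals $\frac{n}{(n+1)u}M$ and the extra term equals $\frac{1}{u}$, so that, conditionally on $U_{(n+1)} = u$,
\[
\max_{k \le j \le n+1} \frac{j}{(n+1)U_{(j)}} = \max\left\{ \frac{n}{(n+1)u}\, M,\ \frac{1}{u} \right\} = \frac{1}{u}\max\left\{ \frac{nM}{n+1},\ 1 \right\}.
\]
Using the independence of $M$ and $U_{(n+1)}$ (and finiteness of all expectations, which follows from Lemma \ref{lm:fdr_2_term}), taking expectations factorizes:
\[
C_k^{(n+1)} = \E\!\left[\frac{1}{U_{(n+1)}}\right]\cdot \E\!\left[\max\left\{ \frac{nM}{n+1},\ 1 \right\}\right].
\]
The first factor is a one-line computation, since $U_{(n+1)}$ has density $(n+1)u^n$ on $(0,1)$, giving $\E[1/U_{(n+1)}] = (n+1)/n$.

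To finish, I would simply discard the truncation at $1$ via $\max\{a,1\} \ge a$, which yields
\[
C_k^{(n+1)} \ge \frac{n+1}{n}\cdot \E\!\left[\frac{nM}{n+1}\right] = \frac{n+1}{n}\cdot \frac{n}{n+1}\,\E[M] = \E[M] = C_k^{(n)}.
\]
I do not anticipate a serious obstacle; the only points requiring care are justifying the independence of $M$ and $U_{(n+1)}$ through the scale-invariance of the conditional law, and correctly tracking the competing normalizations $n$ versus $n+1$, which is precisely what makes $\E[1/U_{(n+1)}] = (n+1)/n$ cancel the rescaling factor $n/(n+1)$. The intuition behind the direction of the inequality is transparent: the extra index $j = n+1$ can only enlarge the maximum, and the random rescaling by $U_{(n+1)}$ contributes nothing on average.
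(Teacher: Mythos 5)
Your proof is correct and follows essentially the same route as the paper: both arguments rest on the scale-invariance fact that $(U_{(1)}/U_{(n+1)},\ldots,U_{(n)}/U_{(n+1)})$ is distributed as $n$ uniform order statistics independently of $U_{(n+1)}$, discard the contribution of the extra index $j=n+1$, and cancel the normalization via $\E\bigl[n/((n+1)U_{(n+1)})\bigr]=1$. The only cosmetic difference is that you drop the $j=n+1$ term at the end (via $\max\{a,1\}\ge a$) whereas the paper drops it at the outset.
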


The monotonicity in Lemma \ref{lm:d_monotone} reveals that the optimal $C_k$ in \eqref{eq:fdr_eqv} takes the form (recall that $T_j = \xi_1 + \cdots + \xi_j$ is defined in \eqref{eq:u_t_define})
\begin{equation}\label{eq:c_k_rep}
C_k := \lim_{n \goto \infty} C_{k}^{(n)} = \lim_{n \goto \infty}\E\left[\max_{k \le j \le n} \frac{j T_{n+1}}{n T_j} \right].
\end{equation}
Note that $C_k$ does not seem to admit a closed-form expression. Nevertheless, this optimal constant can be easily computed via simulations. 

While relegating the full proof of Theorem~\ref{thm:optimal} to Appendix A, here we provide a proof sketch based on the construction of a BHq-compliant procedure and a set of $p$-values satisfying the IWN condition to show the optimality of $C_k$. Explicitly, let the true null $p$-values be $m_0$ \iid~uniform variables $U_1, \ldots, U_{m_0}$ between 0 and 1, and let all the $m - m_0$ false null $p$-values be 0. Denote by $j^\star$ the index $k \le j \le m_0$ that maximizes $j/U_{(j)}$. The BHq-compliant procedure rejects the $j^\star$ smallest true null $p$-values and any $\max\{\lceil m U_{(j^\star)}/q \rceil - j^\star, 0 \}$ of the false null $p$-values ($\lceil x \rceil$ denotes the least integer that is greater than or equal to $x$), which by
construction are all 0. This procedure is compliant (self-consistent) but not \textit{nonincreasing} (a procedure is called nonincreasing if it never rejects more if some $p$-value gets larger), so the FDR-controlling results in \cite{blanchard2009adaptive} do not apply to our case. Taking $q$ sufficiently small and assuming that $m - m_0$ is sufficiently large, we get $\fdp_k \approx q j^\star/(m U_{(j^\star)})$ with high probability. Consequently, we get
\[
\fdr_k \approx \E \left[ \frac{q j^\star}{m U_{(j^\star)}} \right] = \E \left[ \max_{k \le j \le m_0} \frac{j}{m_0 U_{(j)}} \right] \truenull_0 q = C_k^{(m_0)} \truenull_0 q,
\]
which tends to $C_k q$ by taking $m_0 \goto \infty$ and $m_0/m \goto 1$.

For the moment, suppose the limit can be taken under the expectation in \eqref{eq:c_k_rep}. As such, the optimal constant for $\fdr_k$ is
\begin{equation}\label{eq:ck_expe}
C_k =\E\left[ \lim_{n \goto \infty} \max_{k \le j \le n} \frac{j T_{n+1}}{n T_j} \right] = \E \left[ \max_{k \le j < \infty} \frac{j}{T_j} \right],
\end{equation}
where the last equality results from applying the strong law of large numbers to $T_{n}/n$. Recognizing that the integrable random variable $\max_{k \le j < \infty} j/T_j$ decreases to 1 almost surely as $k$ increases to infinity, Lebesgue's dominated convergence theorem readily asserts that $C_k = 1 + o_{k}(1)$, where $o_{k}(1)$ denotes a sequence of numbers tending to 0 as $k \goto \infty$. This is formally stated in the proposition below, where we consider a sequence of multiple testing problems indexed by $l$ such that both $m_l, k_l \goto \infty$ as $l \goto \infty$.

\begin{proposition}\label{prop:k_big}
Under the assumptions of Theorem \ref{thm:robustfdr}, as $k \goto
\infty$, we have $\fdr_k \le (1 + o_k(1)) q$.
\end{proposition}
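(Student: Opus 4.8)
The plan is to reduce the entire statement to the single claim that the universal constant $C_k$ from \eqref{eq:ck_expe} satisfies $C_k \to 1$ as $k \to \infty$. The computation in Section~\ref{sec:contr-fdr-fdr_2} already gives $\fdr_k \le \pi_0 q\, C_k^{(m_0)}$, and Lemma~\ref{lm:d_monotone} yields $C_k^{(m_0)} \le \lim_{n\to\infty} C_k^{(n)} = C_k$; since $\pi_0 \le 1$, this produces the uniform bound $\fdr_k \le C_k q$. Consequently, along the sequence of problems indexed by $l$ with $k_l \to \infty$, we would get $\fdr_{k_l} \le C_{k_l} q = (1 + o_k(1))q$ the moment we establish $C_k = 1 + o_k(1)$. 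This reduction also explains why both indices must diverge: the bound is only meaningful when $k_l \le m_{0,l}$, which forces $m_l \to \infty$ alongside $k_l \to \infty$.

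To show $C_k \to 1$, I would \emph{not} directly justify the exchange of limit and expectation in \eqref{eq:c_k_rep} (the step deferred in the preceding discussion), but instead bound $C_k^{(n)}$ uniformly in $n$ through a tail estimate. Because $\max_{k\le j\le n} j/(nU_{(j)}) \ge 1/U_{(n)} \ge 1$ almost surely, the layer-cake formula gives
\[
C_k^{(n)} - 1 = \int_1^\infty \P\!\left( \max_{k \le j \le n} \frac{j}{n U_{(j)}} > t \right)\d t .
\]
A union bound over $j$, the identity $\P(U_{(j)} < s) = \P(\mathrm{Bin}(n,s) \ge j)$, and the multiplicative Chernoff bound (applied with mean $j/t$ and target $j$) then give, for every $t > 1$ and uniformly in $n$,
\[
\P\!\left( \max_{k \le j \le n} \frac{j}{n U_{(j)}} > t \right) \le \sum_{j \ge k} \ep{-j g(t)} = \frac{\ep{-k g(t)}}{1 - \ep{-g(t)}}, \qquad g(t) := \log t + \tfrac1t - 1 > 0 .
\]

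The main obstacle is that this bound degenerates as $t \downarrow 1$: there $g(t) \to 0$ forces the right-hand side to blow up, so the union bound is hopelessly lossy precisely in the small-deviation regime. The resolution is to split the integral at $1 + \e$. On $[1, 1+\e]$ I bound the probability trivially by $1$, contributing at most $\e$. On $[1+\e, \infty)$ the factor $1/(1 - \ep{-g(t)})$ is bounded by a constant depending only on $\e$ (since $g(t) \ge g(1+\e) > 0$ there), and $\int_{1+\e}^\infty \ep{-k g(t)}\d t \to 0$ as $k \to \infty$ by dominated convergence, using $\ep{-k g(t)} \le \ep{-2 g(t)}$ for $k \ge 2$, which is integrable because $g(t) \sim \log t$ for large $t$. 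Hence $\limsup_k \sup_n (C_k^{(n)} - 1) \le \e$ for every $\e > 0$, so $C_k = \lim_n C_k^{(n)} \to 1$, completing the argument.

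As an alternative matching the heuristic in \eqref{eq:ck_expe}, one could instead prove $C_k = \E[\max_{k \le j < \infty} j/T_j]$ and apply dominated convergence against the almost-sure limit $\max_{k \le j<\infty} j/T_j \downarrow \limsup_j j/T_j = 1$. The exchange of limit and expectation would be handled by writing $\max_{k\le j\le n} jT_{n+1}/(nT_j) = R_n + (T_{n+1}/n - 1)R_n$ with $R_n = \max_{k\le j \le n} j/T_j \uparrow \max_{k\le j<\infty} j/T_j$, applying monotone convergence to $\E[R_n]$ and showing the cross term vanishes via independence of $\xi_{n+1}$ from $R_n$ together with an $L^2$ bound on $R_n$ (valid for $k \ge 3$ from the same Chernoff tail). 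I prefer the uniform tail route above because it sidesteps this moment bookkeeping and the interchange entirely.
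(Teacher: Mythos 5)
Your proposal is correct, and its first step---reducing the proposition to the single claim $C_k \to 1$ via the bound $\fdr_k \le C_k^{(m_0)}\pi_0 q$ and the monotonicity of Lemma~\ref{lm:d_monotone}---is exactly the paper's reduction. Where you genuinely diverge is in how $C_k \to 1$ is proved. The paper first identifies the limiting constant as $C_k = \E\bigl[\max_{k \le j < \infty} j/T_j\bigr]$ by interchanging $\lim_{n\goto\infty}$ and $\E$, an interchange it justifies through the uniform integrability of $\max_{k\le j\le n} jT_{n+1}/(nT_j)$ (Lemma~\ref{lm:uniform_int}) and the Vitali convergence theorem, and then lets $k \goto \infty$ by combining the strong law of large numbers (so that the maximum decreases to $1$ almost surely) with dominated convergence. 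You instead bound $\sup_n\bigl(C_k^{(n)}-1\bigr)$ directly by a layer-cake integral of the tail probability, controlling that tail uniformly in $n$ via a union bound, the identity $\P(U_{(j)} < s) = \P(\mathrm{Bin}(n,s) \ge j)$, and the multiplicative Chernoff bound; your estimates check out ($g(t) = \log t + 1/t - 1 > 0$ for $t>1$, the geometric series sums to $\ep{-kg(t)}/(1-\ep{-g(t)})$, the split at $1+\e$ correctly quarantines the degeneracy of $g$ near $t=1$, $\ep{-2g(t)} = O(t^{-2})$ dominates the integrand on $[1+\e,\infty)$, and $\max_{k\le j\le n} j/(nU_{(j)}) \ge 1/U_{(n)} \ge 1$ validates the layer-cake identity). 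Your route buys three things: it bypasses the uniform-integrability lemma and the limit--expectation interchange entirely, it is quantitative (optimizing over $\e$ yields an explicit rate for $C_k - 1$), and it proves the slightly stronger statement $\sup_n C_k^{(n)} \to 1$. What the paper's route buys is the closed-form representation \eqref{eq:ck_expe} of $C_k$, which it needs anyway for the optimality result (Theorem~\ref{thm:optimal}) and the Monte Carlo evaluation in Figure~\ref{fig:maxexp}; your tail bound alone would not deliver that. The alternative you sketch at the end is essentially the paper's argument with a different bookkeeping for the interchange.
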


To make the derivation of the optimal $C_k$ above rigorous, we must validate \eqref{eq:ck_expe}. In fact, the Vitali convergence theorem together with the following lemma ensures that the limit $\lim_{n \goto \infty}$ and expectation $\E$ can be interchanged.

\begin{lemma}\label{lm:uniform_int}
For a fixed $k \ge 2$, the sequence of random variables
\[
\max_{k \le j \le n} \frac{j T_{n+1}}{n T_j}
\]
are uniformly integrable for $n \ge k$.
\end{lemma}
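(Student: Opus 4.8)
The plan is to verify uniform integrability through the de la Vall\'ee--Poussin criterion: it suffices to exhibit a single exponent $1 < 1+\delta < k$ for which the moments $\E[X_n^{1+\delta}]$ are bounded uniformly in $n$, where I abbreviate $X_n := \max_{k \le j \le n} j T_{n+1}/(n T_j)$. Since $x \mapsto x^{1+\delta}$ is nonnegative, convex, increasing, and superlinear (i.e.\ $x^{1+\delta}/x \to \infty$), such a uniform moment bound forces the family $\{X_n\}_{n \ge k}$ to be uniformly integrable. Because $k \ge 2$, the window $(1,k)$ is nonempty, and I will simply take $\delta = \tfrac12$.

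The first step is to rewrite $X_n$ purely in terms of uniform order statistics. Using $T_j/T_{n+1} = U_{(j)}$ from the representation \eqref{eq:u_t_define}, for each fixed $n$ we have $X_n \overset{d}{=} \max_{k \le j \le n} j/(n U_{(j)})$, so that $X_n$ is, in law, a function of $U_{(1)} \le \cdots \le U_{(n)}$ alone and the factor $T_{n+1}$ disappears. Since uniform integrability depends only on the marginal laws of the $X_n$, I am free to work with this representation for each $n$ separately.

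The heart of the argument is a tail bound on $X_n$ that is uniform in $n$. By a union bound, $\P(X_n > t) \le \sum_{j=k}^n \P(U_{(j)} \le j/(nt))$. The event $\{U_{(j)} \le a\}$ coincides with $\{\mathrm{Bin}(n,a) \ge j\}$, and a union bound over the $\binom{n}{j}$ subsets of size $j$ gives $\P(\mathrm{Bin}(n,a) \ge j) \le \binom{n}{j} a^j$. Substituting $a = j/(nt)$ and using $\binom{n}{j}(j/n)^j = \tfrac{j^j}{j!}\prod_{i=0}^{j-1}\tfrac{n-i}{n} \le \tfrac{j^j}{j!} \le \mathrm{e}^j$ collapses the $n$-dependence, yielding $\P(U_{(j)} \le j/(nt)) \le (\mathrm{e}/t)^j$. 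Summing the geometric series over $j \ge k$ gives $\P(X_n > t) \le 2(\mathrm{e}/t)^k$ for all $t \ge 2\mathrm{e}$ and all $n \ge k$. Integrating this tail via $\E[X_n^{1+\delta}] = (1+\delta)\int_0^\infty t^\delta \P(X_n > t)\,\d t$ and splitting the integral at $t = 2\mathrm{e}$ shows the moment is finite and bounded independently of $n$ precisely when $\delta < k-1$; with $\delta = \tfrac12$ this holds for every $k \ge 2$, and the de la Vall\'ee--Poussin criterion then delivers the claim.

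The main obstacle is obtaining the tail estimate in a form that is genuinely uniform in $n$, and the decisive step is the cancellation $\binom{n}{j}(j/(nt))^j \le (\mathrm{e}/t)^j$: the powers of $n$ produced by the binomial coefficient are exactly absorbed by the $j/n$ inside the $j$-th power, leaving a bound depending only on $j$ and $t$. Once this is in place the summation and the moment integral are routine, and the hypothesis $k \ge 2$ enters only to ensure that the admissible window $(1,k)$ contains an exponent $1+\delta$ large enough to apply de la Vall\'ee--Poussin yet small enough for the tail integral $\int_{2\mathrm{e}}^\infty t^{\delta-k}\,\d t$ to converge.
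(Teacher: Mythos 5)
Your proof is correct, and it takes a genuinely different route from the paper's. The paper exploits the backward submartingale structure already established in Lemma \ref{lm:submartingale}: for $k \ge 3$ it applies Doob's $\ell^2$ maximal inequality to reduce the problem to bounding $\E\bigl(k T_{n+1}/(nT_k)\bigr)^2$, which requires $\E T_k^{-2} < \infty$ and hence forces a separate, more delicate argument for $k=2$ (splitting off the $j=2$ term and working with $1.5$-th moments of $T_2^{-1}$). You instead bypass the martingale machinery entirely: the union bound over $j$, the identity $\{U_{(j)} \le a\} = \{\mathrm{Bin}(n,a) \ge j\}$, and the cancellation $\binom{n}{j}(j/n)^j \le \ep{j}$ give the $n$-free tail estimate $\P(X_n > t) \le 2(\ep{1}/t)^k$ for $t \ge 2\ep{1}$, after which a single $3/2$-moment computation plus de la Vall\'ee--Poussin covers every $k \ge 2$ uniformly, with no case split. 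The paper's argument is shorter given that the submartingale lemma is already in hand and reuses the same tool (Doob's inequality) that drives the rest of Section \ref{sec:robust}; yours is self-contained, more elementary, and in fact yields the sharper quantitative information that the tails decay polynomially of order $k$ uniformly in $n$, which is more than uniform integrability requires. All the individual steps check out: the reduction to marginal laws is legitimate since uniform integrability is a statement about $\sup_n \E[X_n; X_n > M]$ term by term, and the convergence condition $\delta < k-1$ is satisfied by $\delta = \tfrac12$ precisely because $k \ge 2$.
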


While the proof of Lemma \ref{lm:uniform_int} is deferred to the appendix, the proof of Lemma \ref{lm:d_monotone} is given below.
\begin{proof}[Proof of Lemma \ref{lm:d_monotone}]
Denote by $U_{(1)} \le \cdots \le U_{(n)} \le U_{(n+1)}$ the order statistics of $n+1$ \iid~uniform random variables on $(0, 1)$. Then, $U_{(1)}/U_{(n+1)} \le \cdots \le U_{(n)}/U_{(n+1)}$ are distributed the same as the order statistics of $n$ \iid~uniform random variables on $(0, 1)$ and, moreover, are independent of $U_{(n+1)}$. Making use of this fact, we get
\[
\begin{aligned}
C_{k}^{(n+1)} &= \E \left[ \max_{k \le j \le n+1} \frac{j}{(n+1) U_{(j)}} \right]  \\
&\ge \E \left[ \max_{k \le j \le n} \frac{j}{(n+1) U_{(j)}} \right]\\
             & = \E \left[\frac{n}{(n+1) U_{(n+1)}} \cdot \max_{k \le j \le n} \frac{j}{n U_{(j)}/U_{(n+1)}} \right]\\
             & = \E \left[\frac{n}{(n+1) U_{(n+1)}} \right] \E \left[\max_{k \le j \le n} \frac{j}{n U_{(j)}/U_{(n+1)}} \right]\\
             & = \E \left[\frac{n}{(n+1) U_{(n+1)}} \right] C_k^{(n)}.
\end{aligned}
\]
Since the density of $U_{(n+1)}$ is $(n+1) x^n$ for $0 < x < 1$, we readily see that
\[
\E \left[\frac{n}{(n+1) U_{(n+1)}}  \right]= 1.
\]
This completes the last step in certifying $C_{k}^{(n+1)} \ge C_{k}^{(n)}$.
\end{proof}

\begin{figure}[!htp]
\centering
\includegraphics[width=0.5\textwidth]{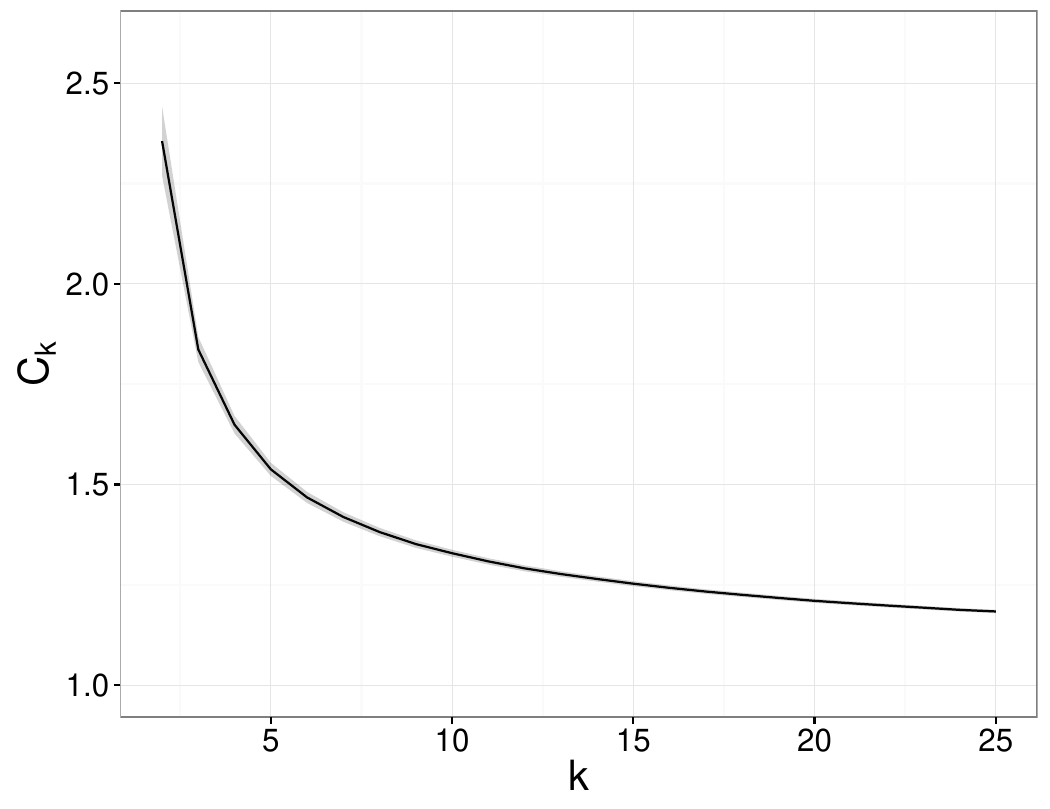}
\caption{Monte Carlo simulated values of $C_k$ using \eqref{eq:ck_expe}. The solid line indicates the maximum of $j/T_j$ over $k \le j \le 10^5$, averaged over $10^4$ runs. The (tiny) shaded band illustrates the $99\%$-coverage confidence interval for each $k$ using normal approximation.}
\label{fig:maxexp}
\end{figure}

Now, we turn to numerically evaluate $C_k$ using the expression \eqref{eq:ck_expe}. Although the distribution of each $j/T_j$ admits an analytical expression, it is however not clear how to calculate the distribution of the maximum of $j/T_j$ over $j$. In view of this difficulty, we resort to Monte Carlo simulations, and Figure \ref{fig:maxexp} presents the results that are averaged over $10^4$ independent replicates. For instance, $C_2 \approx 2.41, C_3 \approx 1.85 , C_4 \approx 1.65, C_5 \approx 1.54$, and $C_{25} \approx 1.18$. In passing, we remark that the estimated values of $C_k$ as a function of $k$ are fairly accurate as indicated by the uniformly short widths of the confidence intervals for all $k$.

\subsection{Controlling $\fdr^k$}
\label{sec:fdr-contr-large}

To further leverage the martingale-based proof idea, we consider a variant of the FDR defined as
\[
\fdr^k := \E \left[\frac{V}{R}; R \ge k \right],
\]
which includes the usual FDR as an example by taking $k = 1$. This relaxed FDR differs insignificantly from the usual FDR if a large number of discoveries are expected, which is often the case in modern multiple testing applications such as genome-wide association studies. For the moment, we do not intend to advocate the use of this new FDR definition in practice as it is clear that future investigation is needed.

In the following, we aim to prove Theorem \ref{thm:stronglyrobustfdr}, a counterpart of Theorem \ref{thm:robustfdr} for the $\fdr^k$. A similarity between the two theorems lies in that their proofs both make use of martingale arguments. That being said, the bound on the $\fdr_k$ in Theorem \ref{thm:robustfdr} cannot carry over to the $\fdr^k$ because $\fdr_k \leq \fdr^k$.

\begin{theorem}\label{thm:stronglyrobustfdr}
If the test statistics obey the IWN condition, then any BHq-compliant procedure satisfies
\begin{equation}\nonumber
\fdr^k \leq \left( 1 + \frac2{\sqrt{qk}} \right) q.
\end{equation}
for any $k \ge 1$.
\end{theorem}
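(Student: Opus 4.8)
The plan is to follow the same two-stage strategy as for Theorem \ref{thm:robustfdr}—first bound the integrand $\frac{V}{R}\mathbf{1}(R\ge k)$ by a functional of the true null $p$-values alone, then control its expectation—but to replace the $L\log L$ maximal inequality (Lemma \ref{lm:maximal}) by a sharp second-moment argument, since we now need the leading constant to be exactly $1$ rather than a bounded factor $C_k$. First I would record the compliance reduction. Write $N_0(t) := \#\{\,i : p^0_i \le t\,\}$ for the true null counting function. On the event $R\ge k$ (so in particular $R\ge 1$), BHq-compliance forces every rejected $p$-value, hence every falsely rejected one, to lie below $t^\star := qR/m$, so $V \le N_0(t^\star)$. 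Since $R = m t^\star/q$ and $t^\star \ge qk/m$, this gives the pointwise bound
\[
\frac{V}{R}\,\mathbf{1}(R\ge k) \;=\; \frac{q\,V}{m\,t^\star}\,\mathbf{1}(R\ge k)\;\le\; \frac{q\,N_0(t^\star)}{m\,t^\star}\,\mathbf{1}(R\ge k) \;\le\; \frac{q}{m}\sup_{t\ge qk/m}\frac{N_0(t)}{t},
\]
where the cases $V=0$ and $R<k$ contribute zero. Taking expectations yields $\fdr^k \le \frac qm\,\E\big[\sup_{t\ge qk/m} N_0(t)/t\big]$, a quantity depending only on the true null $p$-values.

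Next I would invoke the IWN hypothesis exactly as in Section \ref{sec:contr-fdr-fdr_2}: the true null $p$-values are independent and each stochastically dominates $U(0,1)$, so I can couple them with independent uniforms $U_1,\dots,U_{m_0}$ satisfying $p^0_i \ge U_i$ almost surely. Then $N_0(t)\le N(t)$ simultaneously for all $t$, where $N(t) := \#\{i\le m_0 : U_i \le t\}$, and therefore $\sup_t N_0(t)/t \le \sup_t N(t)/t$ pointwise. Writing $n=m_0$ and $s = qk/m$ (so $ns=\pi_0 qk$) and using $\frac qm\, n = \pi_0 q$, the theorem reduces to the core estimate
\[
\E\!\left[\sup_{t\ge s}\frac{N(t)}{nt}\right]\;\le\; 1+\frac{2}{\sqrt{ns}},
\]
for then $\fdr^k \le \pi_0 q\big(1+2/\sqrt{\pi_0 qk}\big)=\pi_0 q + 2\sqrt{\pi_0}\,\sqrt{q/k}\le q\big(1+2/\sqrt{qk}\big)$, the last step absorbing the $\sqrt{\pi_0}\le1$ slack. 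The degenerate regime $s\ge1$, i.e.\ $qk\ge m$, is trivial since the supremum then equals $1$.

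The heart of the argument is the core estimate, and here I would exploit a martingale structure that refines the backward submartingale of Lemma \ref{lm:submartingale}. For fixed $t_1<t_2$, conditioning on the count $N(t_2)$ and using that the points below $t_2$ are uniform on $(0,t_2)$ shows $\E[N(t_1)/t_1\mid N(t_2)]=N(t_2)/t_2$, so $Z(t):=N(t)/t$ is a genuine \emph{reverse} martingale (a martingale as $t$ decreases), with $\E\,Z(t)=n$ for every $t$ and deterministic value $Z(1)=n$. The key move is to center: $\widetilde Z(t):=Z(t)-n$ is a reverse martingale with $\widetilde Z(1)=0$ and terminal (smallest-$t$) second moment $\E\,\widetilde Z(s)^2 = \var\big(N(s)\big)/s^2 = n(1-s)/s$. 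Applying Doob's $L^2$ maximal inequality to $\widetilde Z$ over $t\in[s,1]$ gives $\E\big[\sup_t \widetilde Z(t)^2\big]\le 4\,\E\,\widetilde Z(s)^2$, whence by Cauchy--Schwarz $\E\big[\sup_t\widetilde Z(t)\big]\le 2\sqrt{n(1-s)/s}$. Since $\sup_{t\ge s} Z(t)=n+\sup_{t\ge s}\widetilde Z(t)$, dividing by $n$ produces exactly $\E[\sup_t N(t)/(nt)]\le 1+2\sqrt{1-s}/\sqrt{ns}\le 1+2/\sqrt{ns}$, as required.

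The step I expect to require the most care is this last one. The naive route—applying Doob's $L^2$ inequality directly to the uncentered nonnegative process—loses the constant, producing a bound of the form $2+o(1)$ instead of $1+o(1)$; centering around the deterministic mean $n$ is precisely what converts the factor $4$ into a correction of order $1/\sqrt{ns}$. A second subtlety is that the discrete skeleton $W_j=j/U_{(j)}$ sampled at the order statistics is only a \emph{sub}martingale (consistent with Lemma \ref{lm:submartingale}, which arises from a different filtration), so I must argue with the continuous reverse martingale $Z(t)$ at fixed $t$ rather than with its jump-time skeleton; rigor here calls for the standard reduction of the supremum to a countable dense set together with the right-continuity of $N(\cdot)$ before invoking Doob. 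Everything else—the compliance reduction and the uniform coupling—is routine and parallels Section \ref{sec:contr-fdr-fdr_2}.
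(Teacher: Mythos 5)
Your proof is correct and follows essentially the same route as the paper: reduce $\fdp^k$ via compliance to a maximal functional of the true-null counting process, pass to i.i.d.\ uniforms, observe that $N(t)/t$ (the paper's $Y_j=V_j/j$) is a backward martingale, center at its mean, and apply Doob's $L^2$ maximal inequality followed by Jensen to get the $2/\sqrt{qk}$ correction. The only differences are cosmetic — you index the martingale by a continuous threshold $t$ rather than by the discrete ranks $j\in\{k,\dots,m\}$, and you apply Doob to $|\widetilde Z|$ where the paper uses the positive part $(Y_j-qm_0/m)_+$ — and both yield the identical bound $\pi_0 q+2\sqrt{\pi_0 q/k}$.
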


A number of remarks are as follows. This theorem allows us to take $k = 1$, thus giving a bound on the usual FDR: $\fdr \le q + 2\sqrt{q}$. For example, we can set $q = 0.0024$ if the FDR is aimed to be controlled at $10\%$. Such a bound is not available in Theorem \ref{thm:robustfdr}. For completeness, the bound for $k=1$ might not sharp since Doob's $\ell^2$ martingale maximal inequality used in the proof of Theorem~\ref{thm:stronglyrobustfdr} is generally not sharp. Indeed, this bound can be improved using a careful treatment of \eqref{eq:fdp_main} (see \cite{fdrlinking}). For $k \ge 2$, the bound here is larger than that in Theorem \ref{thm:robustfdr}, namely $2/\sqrt{qk} \ge C_k - 1$, due to the optimality of $C_k$ and the fact $\fdr^k \ge \fdr_k$. The following proof actually establishes a stronger bound, $\truenull_0 q + 2\sqrt{\truenull_0 q/k}$, on the $\fdr^k$. Recall that $\truenull_0$ is the true null proportion $m_0/m$.

\begin{proof}[Proof of Theorem~\ref{thm:stronglyrobustfdr}]
Due to the compliance condition, the number of false discoveries satisfies
\[
V \le \# \left\{ i \text{ is true null}: p_i \le \frac{qR}{m} \right\}.
\]
Thus, we get an upper bound on $\fdp := \frac{V}{R}$ (with the convention $0/0 = 0$) that takes the following form:
\begin{equation}\nonumber
\fdp \le \max_{R \le j \le m}\frac{\# \{ i \text{ is true null}: p_i \le qj/m \}} {j}.
\end{equation}
Consequently, we get
\begin{equation}\label{eq:fdrk_above}
\fdp^k := \frac{V \mathbf{1}_{R \ge k}}{R} \le \max_{k \le j \le m}\frac{\# \{ i \text{ is true null}: p_i \le qj/m \}} {j}.
\end{equation}
Similar to what has been argued in Section \ref{sec:contr-fdr-fdr_2}, the inequality \eqref{eq:fdrk_above} still holds if all true null $p$-values are replaced by $m_0$ \iid~uniform variables $U_1, \ldots, U_{m_0}$ on $(0, 1)$. This observation shows that it suffices to prove
\begin{equation}\label{eq:high_power_basis}
\E\left[ \max_{k \le j \le m}\frac{\#\big{\{}1 \le i \le m_0: U_i \le qj/m\big{\}}}{j} \right] \le \left(1 + 2/\sqrt{qk} \right) q.
\end{equation}

To show \eqref{eq:high_power_basis}, denote by $V_j = \#\{1 \le i \le m_0: U_i \le qj/m\}$ and $Y_j = V_j/j$. Conditional on $Y_{j+1}$, for every $i \in \{1 \le i \le m_0: U_i \le q(j+1)/m\}$ the random variable $U_i$ is uniformly distributed on $[0, q(j+1)/m]$. Hence, the conditional expectation of $V_j$ given $Y_{j+1}$ is
\[
\E(V_j|Y_{j+1}) = \frac{V_{j+1}\frac{qj}{m}}{\frac{q(j+1)}{m}} = \frac{jV_{j+1}}{j+1},
\]
which is equivalent to
\[
\E(Y_j|Y_{j+1}) = Y_{j+1}\,.
\]
In words, $Y_j$ is a backward martingale and, as a consequence, $(Y_j - qm_0/m)_+$ is a backward submartingale. This fact allows us to apply Doob's $\ell^2$ martingale maximal inequality to $(Y_j - q m_0/m)_+$, yielding
\[
\begin{aligned}
\E \left[ \max_{k \le j \le m} \left( Y_j - \frac{q m_0}{m} \right)_+^2  \right] &\le \left( \frac{2}{2-1} \right)^2 \E \left( Y_{k} - \frac{q m_0}{m} \right)_+^2 \\
&\le 4\E\left( Y_k - \frac{q m_0}{m} \right)^2 \\
&= \frac{4q m_0 (1 - qk/m)}{k m}\\
& < \frac{4 \truenull_0 q }{k}.
\end{aligned}
\]
Using Jensen's inequality, the left-hand side of \eqref{eq:high_power_basis} satisfies
\[
\begin{aligned}
\E \left[ \max_{k \le j \le m} Y_j \right] &\le \frac{q m_0}{m} + \E \left[ \max_{k \le j \le m} \left( Y_j - \frac{q m_0}{m} \right)_+ \right]\\
&\le \truenull_0 q+ \sqrt{\E \left[ \max_{k \le j \le m} \left( Y_j - \frac{q m_0}{m} \right)_+^2 \right]}\\
&\le \truenull_0 q + 2\sqrt{\frac{\truenull_0 q}{k}}.
\end{aligned}
\]
\end{proof}

%%% Local Variables:
%%% mode: latex
%%% TeX-master: "paper_annals"
%%% End:

\section{FDR Control and Power of PrivateBHq}
\label{sec:appl-priv}

As an application of Theorem \ref{thm:robustfdr}, this section considers FDR control and power of PrivateBHq. Throughout this process, we take the assumptions of Theorem~\ref{thm:main_pri} as given. That is, we assume that each $p_i$ is $(\eta,\nu)$-sensitive and the parameters satisfy $\epsilon \le 0.5, \delta \le 0.1$, and $m' \ge 10$. From Theorem~\ref{thm:main_pri}, PrivateBHq preserves $(\epsilon, \delta)$-differential privacy, and for brevity this fact will not be reiterated in this section.

The proposition below demonstrates that the PrivateBHq is indeed compliant by making the cutoffs $\{\gamma_j\}$ in Algorithm \ref{algo:pbh2} slightly more stringent than the logarithms of the BHq critical values.
\begin{proposition}\label{prop:compliant}
For any $0 < q < 1$, use the cutoffs
\begin{equation}\label{eq:gamma_cut} 
\gamma_j = \log\frac{qj}{m} - \frac{\eta \sqrt{10 m'\log(1/\delta)} \log(6m'/q)}{\epsilon}
\end{equation}
for $j = 1, \ldots, m'$ in PrivateBHq. Under the assumptions of Theorem \ref{thm:main_pri}, this procedure is compliant with the BHq critical values $qj/m$ with probability at least $1 - 0.1q$.
\end{proposition}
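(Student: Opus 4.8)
The plan is to exhibit a high-probability event, depending only on the Laplace noise that the peeling mechanism injects, on which every rejected $p$-value lies below $qR/m$, where $R$ is the number of rejections. Recall from Algorithm~\ref{algo:pbh2} that step-up BHq is applied to the reported noisy quantities $\tilde\pi_{i_1},\ldots,\tilde\pi_{i_{m'}}$ against the increasing cutoffs $\gamma_1<\cdots<\gamma_{m'}$. If $R\ge 1$ rejections are made, then, exactly as in the ordinary step-up rule, the rejected hypotheses are those carrying the $R$ smallest reported values, and since the $\gamma_j$ are increasing, each such value is at most $\gamma_R$. The case $R=0$ is handled by the convention in Definition~\ref{def:compliant}, so we may fix any rejected index $i_j$ and aim to show $p_{i_j}(D)\le qR/m$.

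First I would relate the reported value back to the true $p$-value. Let $Z_{i_j}\sim\lap(\lambda)$ denote the fresh output draw that the corresponding invocation of Private Min adds, so $\tilde\pi_{i_j}=\pi_{i_j}(D)+Z_{i_j}$. Because $\pi_{i_j}(D)=\log\max\{\nu,p_{i_j}(D)\}\ge\log p_{i_j}(D)$ whether or not the truncation at $\nu$ is active, we obtain
\[
\log p_{i_j}(D)\;\le\;\pi_{i_j}(D)\;=\;\tilde\pi_{i_j}-Z_{i_j}\;\le\;\gamma_R-Z_{i_j}.
\]
Substituting the explicit cutoff \eqref{eq:gamma_cut} together with the scale $\lambda=\eta\sqrt{10m'\log(1/\delta)}/\epsilon$ from Theorem~\ref{thm:main_pri}, the right-hand side equals $\log\frac{qR}{m}-\lambda\log(6m'/q)-Z_{i_j}$. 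Hence compliance at $i_j$, i.e. $p_{i_j}(D)\le qR/m$, holds as soon as $Z_{i_j}\ge-\lambda\log(6m'/q)$: the sole mechanism by which compliance can fail at $i_j$ is an atypically negative output noise.

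The last step is a union bound over the $m'$ independent fresh draws. For $Z\sim\lap(\lambda)$ and $t>0$ one has $\P(Z<-t)=\tfrac12 e^{-t/\lambda}$, so with $t=\lambda\log(6m'/q)$ each draw violates the required inequality with probability $\tfrac12\cdot\frac{q}{6m'}=\frac{q}{12m'}$. Union bounding over the at most $m'$ reported values gives
\[
\P\big(\exists\,j:\,Z_{i_j}<-\lambda\log(6m'/q)\big)\;\le\;m'\cdot\frac{q}{12m'}\;=\;\frac{q}{12}\;\le\;0.1q,
\]
and on the complementary event every rejected $p$-value satisfies $p_{i_j}(D)\le qR/m$, which is precisely BHq-compliance; this occurs with probability at least $1-0.1q$.

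I expect the only genuinely delicate point to be bookkeeping the noise correctly. One must isolate the fresh \emph{output} noise $Z_{i_j}$ as opposed to the internal selection noise that Private Min uses for its $\argmin$, recognize that compliance is a \emph{one-sided} event requiring only that this noise not be too negative, and verify that these draws are independent across the $m'$ peeling rounds so that the union bound is legitimate. Everything else reduces to an elementary substitution into \eqref{eq:gamma_cut} and a Laplace tail estimate; the factor $6$ inside the logarithm is calibrated exactly so that the resulting failure probability $q/12$ stays below the target $0.1q$.
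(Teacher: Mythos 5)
Your proposal is correct and follows essentially the same route as the paper: condition on the one-sided event that none of the $m'$ fresh output noise draws falls below $-\lambda\log(6m'/q)$, then substitute the cutoff \eqref{eq:gamma_cut} to cancel the slack term and conclude $p_{i_j}\le qR/m$. The only cosmetic difference is that you compute the Laplace tail and union bound directly (getting the slightly sharper failure probability $q/12$, which the paper itself notes later in the proof of Corollary \ref{coro:examples}), whereas the paper invokes its Lemma \ref{lem:slack} with $\alpha=0.1q$.
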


As a remark, the first term $\log\frac{qj}{m}$ in \eqref{eq:gamma_cut} corresponds to the non-private cutoff and the second term $ - \frac{\eta \sqrt{10 m'\log(1/\delta)} \log(6m'/q)}{\epsilon}$ is used to handle the added noise. Notably, the constant $0.1$ above can be replaced by any positive constant provided that the second term is appropriately scaled. The proof of Proposition~\ref{eq:gamma_cut} is given later after Theorem~\ref{thm:pbhq_fdr}.

%a bound on the noise would help clarity. 

The compliance condition shown in Proposition \ref{prop:compliant} together with Theorem \ref{thm:robustfdr} implies FDR control of PrivateBHq. More precisely, letting $\mathcal{C}$ denote the event that the rejected $p$-values are compliant, we have
\[
\begin{aligned}
\fdr_k &= \E\left( \fdp_k; \mathcal{C} \right) + \E\left( \fdp_k; \overline{\mathcal{C}} \right)\\
&\le C_k q + \P(\overline{\mathcal{C}}) \le (C_k + 0.1)q
\end{aligned}
\]
for every $k \ge 2$. As such, to control the FDR at level, say $10\%$ (a common level used in practice), we can set $q = 0.1/(C_k + 0.1)$ in PrivateBHq. This proves the following theorem.
\begin{theorem}\label{thm:pbhq_fdr}
Under the same assumptions as in Proposition \ref{prop:compliant} and if the test statistics satisfy the IWN condition, the PrivateBHq procedure gives
\begin{equation}\nonumber
\fdr_k \le (C_k + 0.1)q
\end{equation}
for all $k \ge 2$.
\end{theorem}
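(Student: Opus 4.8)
The plan is to reduce Theorem~\ref{thm:pbhq_fdr} to the two ingredients already in hand --- the probabilistic compliance guarantee of Proposition~\ref{prop:compliant} and the pathwise FDP bound underlying Theorem~\ref{thm:robustfdr} --- by splitting $\fdr_k$ according to whether the realized rejection set is compliant. Let $\mathcal{C}$ denote the event that PrivateBHq, run with the cutoffs \eqref{eq:gamma_cut}, is compliant with the \BHq critical values $\{qj/m\}$ in the sense of Definition~\ref{def:compliant}; under the assumptions of Theorem~\ref{thm:main_pri}, Proposition~\ref{prop:compliant} gives $\P(\overline{\mathcal{C}}) \le 0.1 q$. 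First I would write
\[
\fdr_k = \E\left( \fdp_k; \mathcal{C} \right) + \E\left( \fdp_k; \overline{\mathcal{C}} \right).
\]
The second term is immediate: since $V \le R$ always, we have $\fdp_k \le 1$ pathwise, so $\E(\fdp_k; \overline{\mathcal{C}}) \le \P(\overline{\mathcal{C}}) \le 0.1 q$, which exactly absorbs the failure probability of compliance.

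For the main term I would \emph{not} invoke Theorem~\ref{thm:robustfdr} as a black box, but instead reuse the pathwise inequality \eqref{eq:fdp_main} from its proof. That inequality is a purely deterministic consequence of compliance: on any outcome whose rejected $p$-values satisfy the compliance condition, $\fdp_k \le \max_{k \le j \le m_0} \tfrac{qj}{m p^0_{(j)}}$. Since this holds on every outcome in $\mathcal{C}$ and the right-hand side is nonnegative, I would bound
\[
\E(\fdp_k; \mathcal{C}) \le \E\left[ \max_{k \le j \le m_0} \frac{qj}{m p^0_{(j)}}; \mathcal{C} \right] \le \E\left[ \max_{k \le j \le m_0} \frac{qj}{m p^0_{(j)}} \right],
\]
the last step merely dropping the indicator of $\mathcal{C}$. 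The remaining unconditional expectation is precisely the quantity controlled in the second half of the proof of Theorem~\ref{thm:robustfdr}: under the IWN assumption the true null $p$-values are jointly independent and each stochastically dominates the uniform distribution, so replacing the $p^0_{(j)}$ by uniform order statistics and applying Lemma~\ref{lm:fdr_2_term} together with the monotonicity bound $C_k^{(m_0)} \le C_k$ yields $\E[\max_{k \le j \le m_0} \tfrac{qj}{m p^0_{(j)}}] \le C_k \pi_0 q \le C_k q$. Combining the two terms gives $\fdr_k \le C_k q + 0.1 q = (C_k + 0.1)q$ for every $k \ge 2$.

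The step I expect to require the most care is exactly this bypass of Theorem~\ref{thm:robustfdr}. Because the Laplace noise injected for privacy makes PrivateBHq compliant only with probability $1 - 0.1q$ rather than surely, the theorem cannot be applied directly; the subtlety is to notice that \eqref{eq:fdp_main} holds outcome-by-outcome on $\mathcal{C}$, that the subsequent expectation bound depends on the true null $p$-values only (through IWN) and not on the compliance event, and hence that intersecting with $\mathcal{C}$ can only decrease the nonnegative expectation, so that the same constant $C_k$ survives. The fresh Laplace noise being independent of the $p$-values is what lets the IWN-based computation proceed unchanged on this restricted event.
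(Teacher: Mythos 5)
Your proposal is correct and follows essentially the same route as the paper: the paper's proof is exactly the decomposition $\fdr_k = \E(\fdp_k; \mathcal{C}) + \E(\fdp_k; \overline{\mathcal{C}})$, bounding the second term by $\P(\overline{\mathcal{C}}) \le 0.1q$ via Proposition~\ref{prop:compliant} and the first by $C_k q$ via the argument of Theorem~\ref{thm:robustfdr}. Your extra care in reusing the pathwise inequality \eqref{eq:fdp_main} on $\mathcal{C}$ and then dropping the indicator --- rather than citing Theorem~\ref{thm:robustfdr} as a black box, which as stated assumes sure compliance --- is precisely the step the paper leaves implicit, so it is a legitimate (and slightly more rigorous) rendering of the same argument.
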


To prove Proposition \ref{prop:compliant}, we first present a simple lemma that gives a concentration bound on Laplace random variables, and its proof can be found in the appendix.
\begin{lemma}\label{lem:slack}
Let $Z_1, \ldots, Z_n$ be \iid~$\lap(\lambda)$ random variables. For any $0 < \alpha < 1$, the following two statements are true:
\begin{enumerate}
\item With probability at least $1 - \alpha$, all $Z_j$ are larger than $-\lambda \log\frac{n}{2\alpha}$.
\item With probability at least $1 - \alpha$, all $|Z_j|$ are smaller than $\lambda \log\frac{n}{\alpha}$.
\end{enumerate}
\end{lemma}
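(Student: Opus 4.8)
The plan is to reduce both claims to a single-variable Laplace tail bound followed by a union bound over the $n$ coordinates. First I would record the two relevant tail probabilities for a single $Z \sim \lap(\lambda)$, whose density is $\exp(-|x|/\lambda)/(2\lambda)$. Integrating the density directly gives the one-sided lower tail $\P(Z \le -t) = \tfrac12 \mathrm{e}^{-t/\lambda}$ and the two-sided tail $\P(|Z| \ge t) = \mathrm{e}^{-t/\lambda}$ for every $t \ge 0$. The factor of two separating these two expressions is precisely what will account for the different constants ($n/(2\alpha)$ versus $n/\alpha$) appearing in the two parts of the lemma.

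For the first statement I would take $t = \lambda\log\frac{n}{2\alpha}$, so that each coordinate satisfies $\P(Z_j \le -t) = \tfrac12 \mathrm{e}^{-t/\lambda} = \alpha/n$. A union bound over $j = 1, \ldots, n$ then bounds the probability that some $Z_j$ falls below $-t$ by $n \cdot (\alpha/n) = \alpha$, and the complementary event is exactly that all $Z_j$ exceed $-\lambda\log\frac{n}{2\alpha}$. For the second statement I would instead set $t = \lambda\log\frac{n}{\alpha}$, so that $\P(|Z_j| \ge t) = \mathrm{e}^{-t/\lambda} = \alpha/n$ for each $j$, and the identical union bound shows that all $|Z_j|$ stay below $\lambda\log\frac{n}{\alpha}$ except with probability at most $\alpha$.

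The computation is entirely routine, so I do not anticipate any genuine obstacle. The only point demanding care is the bookkeeping between the one-sided and the two-sided tail: conflating them would shift the threshold by the multiplicative factor of two and produce the wrong dependence on $\alpha$ in one of the two parts. Keeping these two tail formulas straight is therefore the single step worth highlighting.
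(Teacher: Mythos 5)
Your proposal is correct and follows exactly the paper's own argument: compute the one-sided and two-sided Laplace tail probabilities at the stated thresholds, observe each equals $\alpha/n$, and apply a union bound over the $n$ coordinates. Nothing further is needed.
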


\begin{proof}[Proof of Proposition \ref{prop:compliant}]
Let $\tilde\pi_{i_j} = \log\max\{\nu, p_{i_j}\} + Z_{i_j}$ be yielded by \peel in Algorithm \ref{algo:pbh2}, where $Z_{i_j}$ follows $\lap(\lambda)$ for $j = 1, \ldots, m'$. The parameter $\lambda = \eta\sqrt{10m'\log(1/\delta)}/\e$ is as in Theorem \ref{thm:main_pri}. Taking $\alpha = 0.1q$, Lemma \ref{lem:slack} shows that
\begin{equation}\label{eq:zij_small}
Z_{i_j} > -\lambda \log\frac{m'}{2\times 0.1 q} > -\frac{\eta \sqrt{10m'\log(1/\delta)} \log(6m'/q)}{\epsilon}.
\end{equation}
uniformly for $j = 1, \ldots, m'$ with probability at least $1 - 0.1q$.

Next, we show that on the event \eqref{eq:zij_small}, PrivateBHq is compliant. Denote by $R_{\textnormal{Pt}}$ the number of rejections made by this procedure. If $\tilde\pi_{i_j}$ is rejected, it must satisfy
\[
\log\max\{\nu, p_{i_j}\} + Z_{i_j} \le \gamma_{R_{\textnormal{Pt}}} = \log\frac{qR_{\textnormal{Pt}}}{m} - \frac{\eta \sqrt{10 m'\log(1/\delta)} \log(6m'/q)}{\epsilon}.
\]
Plugging \eqref{eq:zij_small} into this display gives
\[
\log\max\{\nu, p_{i_j}\} \le \log\frac{qR_{\textnormal{Pt}}}{m}.
\]
Thus, $p_{i_j} \le q R_{\textnormal{Pt}}/m$ for all rejected $p_{i_j}$ on the event \eqref{eq:zij_small}, which happens with probability at least $1 - 0.1q$. This completes the proof.

\end{proof}

Next, Theorem \ref{thm:power} shows that the PrivateBHq procedure with a slightly inflated nominal level is at least as powerful as the BHq step-down procedure. The proofs of this theorem and its corollary are deferred to the appendix. To state the theorem, let $R_{\textnormal{SD}}$ denote the number of rejections made by the (non-private) step-down procedure.

\begin{theorem}\label{thm:power}
Fix $q$ and assume $\nu \le q/m$. Under the assumptions of Theorem \ref{thm:pbhq_fdr}, run the PrivateBHq procedure at level
\[
q' = q \ep{\frac{24 \eta\sqrt{m'\log(1/\delta)} \log m}{\e}}
\]
and the BHq step-down procedure at level $q$. Then, the numbers of rejections satisfy
\begin{equation}\label{eq:r_ineq}
R_{\textnormal{Pt}} \ge \min\{R_{\textnormal{SD}}, m'\}
\end{equation}
with probability tending to one as $m \goto \infty$.
\end{theorem}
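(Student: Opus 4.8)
The plan is to show that, on a high-probability event controlling all of the Laplace noise, the step-up \BHq{} stage of \PBH{} (Algorithm~\ref{algo:pbh2}) already rejects at least $r := \min\{R_{\textnormal{SD}}, m'\}$ hypotheses. Recalling that the step-up stage rejects $\max\{j : \tilde\pi_{(i_j)} \le \gamma_j\}$ of them, it suffices to exhibit at least $r$ among the $m'$ reported values $\tilde\pi_{i_1},\ldots,\tilde\pi_{i_{m'}}$ that lie below the \emph{single} cutoff $\gamma_r$: this forces $\tilde\pi_{(i_r)} \le \gamma_r$ and hence $R_{\textnormal{Pt}} \ge r$. We may assume $r \ge 1$, the case $r = 0$ being vacuous. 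The only input we extract from the step-down procedure is that, by its very definition, $p_{(j)} \le qj/m \le qr/m$ for every $j \le r \le R_{\textnormal{SD}}$; since $\nu \le q/m \le qr/m$ as well, the set $A := \{i : \pi_i \le \log(qr/m)\}$ of indices with small (truncated, logarithmic) $p$-value satisfies $|A| \ge r$.

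First I would fix the good event. The noise consists of at most $m'(m+1)$ draws of $\lap(\lambda)$ (each of the $m'$ invocations of Private Min in Algorithm~\ref{algo:peel} draws $m$ selection noises and one fresh reporting noise), with $\lambda = \eta\sqrt{10m'\log(1/\delta)}/\epsilon$ as in Theorem~\ref{thm:main_pri}. Applying the second part of Lemma~\ref{lem:slack} with, say, $\alpha_m = 1/\log m \goto 0$, with probability tending to one all of these variables are bounded in absolute value by $b_0 := \lambda\log(m'(m+1)/\alpha_m) \le (2+o(1))\lambda\log m$; we condition on this event henceforth. The heart of the argument is then the following peeling observation: in each of the first $\min\{m',|A|\} \ge r$ rounds of \peel{}, at least one index of $A$ is still available, because strictly fewer than $|A|$ indices have been removed so far. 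Consequently the minimum available selection-perturbed value in that round is at most $\log(qr/m) + b_0$, so the selected index $i_t$ obeys $\pi_{i_t} \le \log(qr/m) + 2b_0$, and its reported value (formed with a fresh reporting noise) satisfies $\tilde\pi_{i_t} \le \log(qr/m) + 3b_0$. Thus at least $r$ of the reported values are bounded by $\log(qr/m) + 3b_0$.

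It remains to check that $\log(qr/m) + 3b_0 \le \gamma_r$ at the inflated level $q'$, where $\gamma_r = \log(q'r/m) - \lambda\log(6m'/q')$. This reduces to $3b_0 + \lambda\log(6m'/q') \le \log(q'/q) = 24\,\eta\sqrt{m'\log(1/\delta)}\log m/\epsilon$. Since $3b_0 \le (6+o(1))\lambda\log m$ and $\lambda\log(6m'/q') \le (1+o(1))\lambda\log m$ (using $m'\le m$ and $q'\goto q$ in the regime $\eta\sqrt{m'\log(1/\delta)}\log m/\epsilon = o(1)$), the left-hand side is at most $(7+o(1))\lambda\log m$, whereas the right-hand side equals $(24/\sqrt{10})\lambda\log m \approx 7.59\,\lambda\log m$; the inequality therefore holds for all large $m$. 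Hence at least $r$ reported values lie below $\gamma_r$, giving $\tilde\pi_{(i_r)} \le \gamma_r$ and so $R_{\textnormal{Pt}} \ge r$, which is \eqref{eq:r_ineq}; letting $m \goto \infty$ finishes the proof.

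The main obstacle is precisely the peeling observation: because the privacy noise can reorder the $p$-values, one cannot simply assert that \peel{} returns the $r$ genuinely smallest $p$-values, and a direct attempt to track which indices survive founders on clusters of near-equal $p$-values. The decisive point that circumvents this is that we need not identify \emph{which} indices are selected; it is enough that in each of the first $r$ rounds \emph{some} index of $A$ remains available, which caps every selected reported value uniformly by $\log(qr/m)+3b_0$ irrespective of ties. Matching this uniform bound to $\gamma_r$ is then only a matter of verifying that the multiplicative inflation $q'/q$ carries a large enough constant to absorb three copies of the noise envelope $b_0$ together with the cutoff slack, which is exactly the role played by the constant $24$.
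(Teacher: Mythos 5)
Your proof is correct and follows essentially the same route as the paper's: the same high-probability bound on all Laplace draws, the same key observation that in each of the first $\min\{R_{\textnormal{SD}},m'\}$ peeling rounds some index with $\pi_i\le\log(qr/m)$ is still available (so each reported value is capped by that quantity plus three noise envelopes), and the same final comparison against $\gamma_r$ at the inflated level $q'$. The only differences are bookkeeping: you choose $\alpha_m=1/\log m$ and use the fixed-$q$ hypothesis to absorb the cutoff slack, whereas the paper picks explicit polynomial $\alpha,\alpha'$ and works under the weaker assumption $q\ge 6m^{-1.5}$.
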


When $R_{\textnormal{SD}} \ge m'$ and the event \eqref{eq:r_ineq}
happens, PrivateBHq must reject all $p$-values passing through
\peel. In the case where non-null $p$-values are significant enough to pass through \peel, this fact suggests that PrivateBHq achieves high power. This high-power property, however, is appealing if $q'$ is only
slightly larger than $q$ or, put more simply, the number $24 \eta\sqrt{m'\log(1/\delta)} \log m/\e$ is small. With regard to Examples \ref{ex:binomial} and \ref{ex:exponential}, this is equivalent to have a sufficiently large sample size $n$. The following corollary formalizes this point.

%This is indeed the case as shown by the corollary below.

\begin{corollary}\label{coro:examples}
In Examples \ref{ex:binomial} and \ref{ex:exponential}, fix $\epsilon, \delta$ and assume $m' \le \min\{n^{1 - c}, m\}$ for constant $c > 0$. Under the assumptions of Theorem \ref{thm:power}, the claims of both Theorems \ref{thm:pbhq_fdr} and \ref{thm:power} hold as $m, n \goto \infty$ if PrivateBHq is performed at level $(1+c')q$ for a sufficiently small constant $c' > 0$.
\end{corollary}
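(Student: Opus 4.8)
The plan is to reduce the corollary to a single asymptotic estimate: I will show that, for the $p$-value computations of Examples~\ref{ex:binomial} and~\ref{ex:exponential}, the privacy-induced inflation factor multiplying $q$ in Theorem~\ref{thm:power} tends to $1$, so that running PrivateBHq at the mildly inflated level $(1+c')q$ already inherits both the FDR guarantee of Theorem~\ref{thm:pbhq_fdr} and the power guarantee of Theorem~\ref{thm:power}. First I would record the rates furnished by the two examples, namely $\eta \asymp \sqrt{(\log n)/n}$ and $\nu = m^{-1-c}$, and check that the standing hypotheses hold: since $\epsilon,\delta$ are fixed we may take $\epsilon \le 0.5$, $\delta \le 0.1$ and (for $n$ large) $m' \ge 10$ as required by Theorem~\ref{thm:main_pri}, and since $m^{-c} \le q$ for all large $m$ the requirement $\nu = m^{-1-c} \le q/m$ of Theorem~\ref{thm:power} is met.

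The crux is to show that the exponent defining $q'$ vanishes. Using $m' \le n^{1-c}$, the polynomial growth $m \le \mathtt{poly}(n)$ (so that $\log m = O(\log n)$), and the fixed $\epsilon,\delta$, I would estimate
\[
\frac{24\,\eta\sqrt{m'\log(1/\delta)}\,\log m}{\epsilon} \;\lesssim\; \sqrt{\frac{\log n}{n}}\cdot n^{(1-c)/2}\cdot \log n \;\asymp\; (\log n)^{3/2}\, n^{-c/2} \;\longrightarrow\; 0,
\]
the decisive point being that the polynomial gap $n^{-c/2}$ coming from $m' \le n^{1-c}$ overwhelms the polylogarithmic factors. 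Hence $q' = q\exp(\,\cdot\,) \to q$, so for any fixed $c'>0$ there is an $N$ with $q' \le (1+c')q$ for all $n \ge N$.

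With the exponent controlled, both conclusions follow. For FDR, Theorem~\ref{thm:pbhq_fdr} applies verbatim to PrivateBHq run at level $(1+c')q$, giving $\fdr_k \le (C_k+0.1)(1+c')q$ for every $k\ge 2$, which is the asserted control since $c'$ is a small constant. For power, I would note that PrivateBHq is monotone in its nominal level along each noise realization: inspecting \eqref{eq:gamma_cut}, the cutoff $\gamma_j$ depends on the level through $\log(qj/m)$ and through $-\log(6m'/q)$, both increasing, so raising the level only enlarges the rejection set. Therefore, on the same event of probability tending to one furnished by Theorem~\ref{thm:power}, running at level $(1+c')q \ge q'$ rejects at least as many hypotheses as running at $q'$, so $R_{\textnormal{Pt}} \ge \min\{R_{\textnormal{SD}}, m'\}$ with probability tending to one.

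The step I expect to require the most care is not the rate computation but the transfer of the power statement from the specific level $q'$ of Theorem~\ref{thm:power} to the target level $(1+c')q$; this rests on the pathwise monotonicity of PrivateBHq in its level, which must be argued from the explicit cutoffs in \eqref{eq:gamma_cut} rather than invoked abstractly. A secondary subtlety is the careful bookkeeping of the two roles played by the constant $c$ (the choice of $\nu = m^{-1-c}$ in the examples versus the bound $m' \le n^{1-c}$) and the legitimate use of $\log m = O(\log n)$ extracted from $m \le \mathtt{poly}(n)$.
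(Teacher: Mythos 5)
Your power argument is essentially the paper's: both rest on the estimate $\eta\sqrt{m'}\log m \lesssim n^{-c/2}(\log n)^{3/2}\to 0$ (using $\eta\asymp\sqrt{(\log n)/n}$, $m'\le n^{1-c}$, and $\log m = O(\log n)$ from $m\le\mathtt{poly}(n)$), so that $q'\to q$ and hence $q'\le (1+c')q$ eventually; your explicit appeal to monotonicity of the rejection set in the nominal level via \eqref{eq:gamma_cut} is a reasonable way to make precise what the paper leaves implicit. That half is fine.

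The FDR half has a genuine gap. The claim of Theorem \ref{thm:pbhq_fdr} is the inequality $\fdr_k\le (C_k+0.1)q$, and the corollary asserts that this exact claim holds when PrivateBHq is run at level $(1+c')q$. Applying Theorem \ref{thm:pbhq_fdr} verbatim at level $(1+c')q$ yields only $\fdr_k\le (C_k+0.1)(1+c')q$, which is \emph{strictly weaker} than the asserted bound for every $c'>0$; the phrase ``which is the asserted control since $c'$ is a small constant'' does not close this. You need to find slack somewhere in the constants to absorb the factor $(1+c')$. The paper does this by revisiting the proof of Proposition \ref{prop:compliant}: taking $\alpha = q/12$ in Lemma \ref{lem:slack} (rather than $\alpha=0.1q$) still yields the threshold $\lambda\log(6m'/q)$ appearing in the cutoffs \eqref{eq:gamma_cut}, so the compliance event actually holds with probability at least $1-q/12$, improving the bound of Theorem \ref{thm:pbhq_fdr} to $(C_k+1/12)q$. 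One then \emph{defines} $c'$ by $(C_k+1/12)(1+c')=C_k+0.1$, so that running at level $(1+c')q$ gives exactly $\fdr_k\le (C_k+0.1)q$. Without some such tightening, your argument proves a slightly weaker corollary than the one stated.
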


\subsection{Empirical evaluation}
\label{sec:empir-comp}

%%{\red{Discuss the price paid for privacy in terms of FDR control and power. Provide simulations.}}
%%For comparison, we consider a private version of the Bonferroni's method, which is referred to as PrivateBonf, which is detailed as follows.

In this subsection, we evaluate the price paid for privacy in terms of FDR control and power in the PrivateBHq procedure. The aim is to provide a better picture of how much detection power would be compromised due to privacy guarantees for FDR control. For completeness, this comparison includes a private version of Bonferroni's method, which is referred to PrivateBonf in this paper. PrivateBonf is perhaps the simplest baseline for private multiple hypothesis testing. This procedure is detailed as follows. As in Algorithm~\ref{algo:pbh2}, let $p_1, \ldots, p_m$ be $(\eta, \nu)$-sensitive $p$-values and set $\pi_j = \log\max\{p_j, \nu\}$ for all $j$. The PrivateBonf procedure adds independent $\lap(\tilde\lambda)$ noise to all $\pi_j$, where $\tilde\lambda = \eta\sqrt{10 m \log(1/\delta)}/(2 \epsilon)$, and rejects those with noisy counts below
\[
\log\frac{q}{m} - \frac{\eta\sqrt{10 m \log(1/\delta)}\log(5m/q)}{2 \epsilon}.
\]

The following result is concerned with privacy and family-wise error rate (FWER) control of PrivateBonf. Note that the FWER denotes the probability that at least one false positive is made. The proof is deferred to the appendix. As an aside, the privacy guarantee in this result might be improved by using the sparse vector technique \cite{hardt2010multiplicative} and this is left for future investigation.
\begin{proposition}\label{prop:bonf}
Under the assumptions of Theorem~\ref{thm:main_pri}, the following two statements are true:
\begin{enumerate}
\item
PrivateBonf is $(\epsilon, \delta)$-differentially private;
\item
PrivateBonf satisfies $\mathrm{FWER} \le 1.1 q$.
\end{enumerate}
\end{proposition}

\begin{figure}[!htp]
\centering
\begin{subfigure}[b]{0.45\textwidth}
\centering
\includegraphics[width=0.8\textwidth]{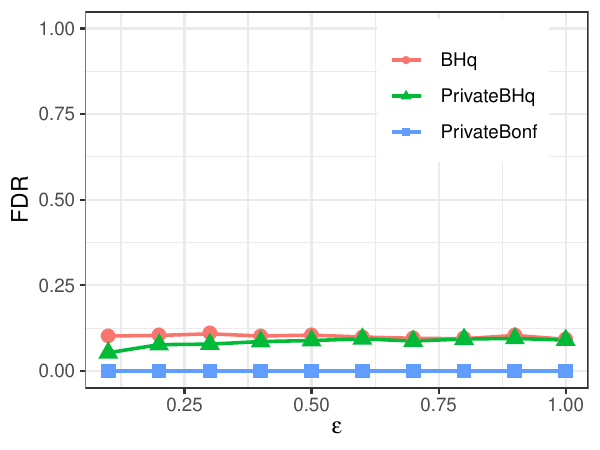}
%\caption{FDR control with }
\end{subfigure}
\begin{subfigure}[b]{0.45\textwidth}
\centering
\includegraphics[width=0.8\textwidth]{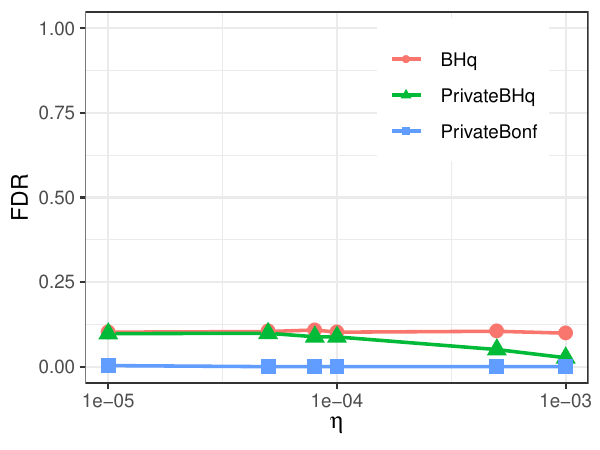}
%\caption{two-sided tests}
\end{subfigure}\\
\centering
\begin{subfigure}[b]{0.45\textwidth}
\centering
\includegraphics[width=0.8\textwidth]{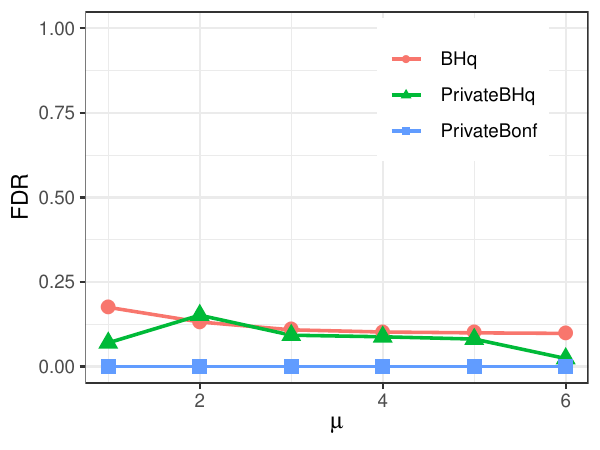}
%\caption{one-sided tests}
\end{subfigure}
\begin{subfigure}[b]{0.45\textwidth}
\centering
\includegraphics[width=0.8\textwidth]{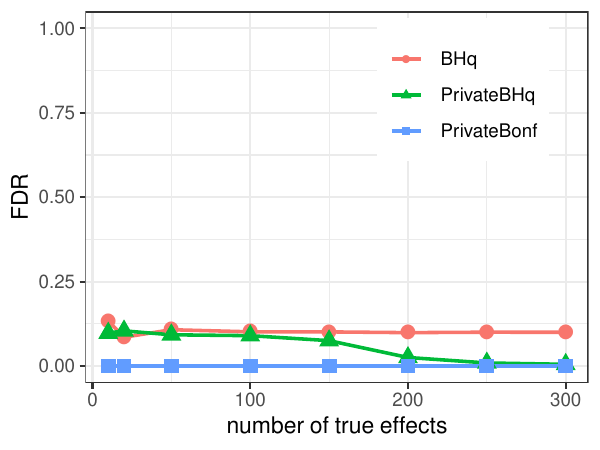}
%\caption{two-sided tests}
\end{subfigure}
\hfill
\caption{The FDR of BHq, PrivateBHq, and PrivateBonf, plotted against varying $\epsilon, \eta, \mu, m_1$, respectively, and averaged over 100 independent replicates. Note that PrivateBHq in general discovers fewer than BHq and has a smaller FDR than BHq as well.}
\label{fig:fdp_eval}
\end{figure}

\begin{figure}[!htp]
\centering
\begin{subfigure}[b]{0.45\textwidth}
\centering
\includegraphics[width=0.8\textwidth]{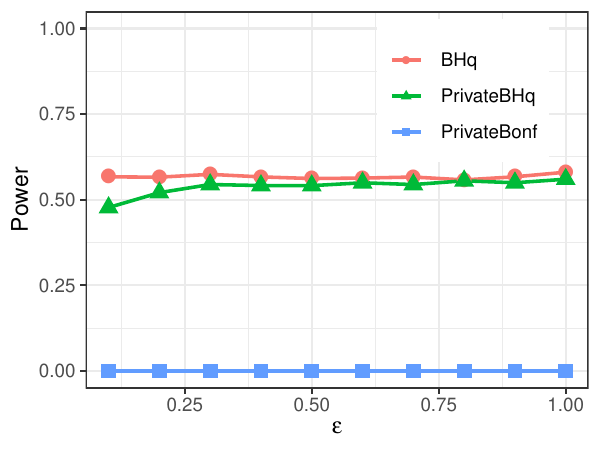}
%\caption{FDR control with }
\end{subfigure}
\begin{subfigure}[b]{0.45\textwidth}
\centering
\includegraphics[width=0.8\textwidth]{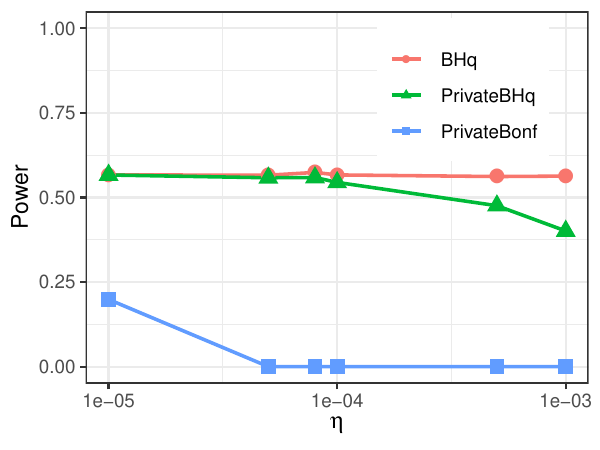}
%\caption{two-sided tests}
\end{subfigure}\\
\centering
\begin{subfigure}[b]{0.45\textwidth}
\centering
\includegraphics[width=0.8\textwidth]{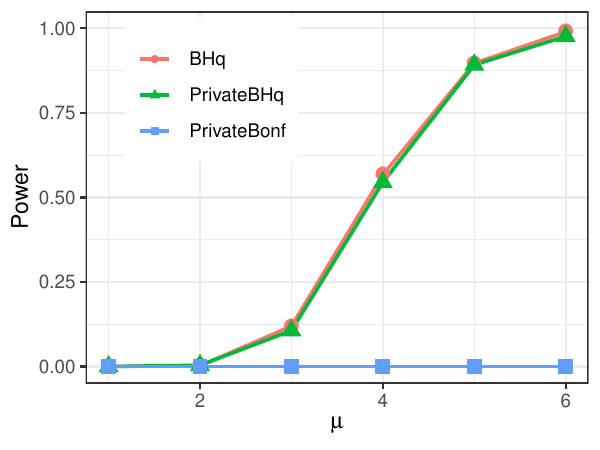}
%\caption{one-sided tests}
\end{subfigure}
\begin{subfigure}[b]{0.45\textwidth}
\centering
\includegraphics[width=0.8\textwidth]{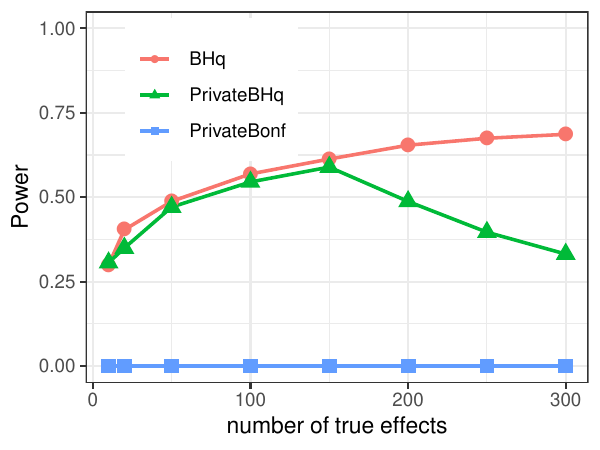}
%\caption{two-sided tests}
\end{subfigure}
\hfill
\caption{The power of BHq, PrivateBHq, and PrivateBonf, plotted against varying $\epsilon, \eta, \mu, m_1$, respectively, and averaged over 100 independent replicates.}
\label{fig:power_eval}
\end{figure}

Figures~\ref{fig:fdp_eval} and \ref{fig:power_eval} present, respectively, the FDR and the power of PrivateBHq, the (non-private) BHq step-up procedure, and PrivateBonf by simulations. Unless specified, we set $m = 10^5, m' = 100, q = 0.1, \eta = 10^{-4}, \nu = 0.5q/m, \epsilon = 0.5$, and $\delta = 0.001$. To construct the $p$-values, we let $p_i = \Phi(\xi_i - \mu)$ for $i = 1, \ldots, m_1$ and $p_{m_1+1}, \ldots, p_m$ be i.i.d.~uniform variables on $(0, 1)$, where $\Phi$ is the CDF of $\mathcal{N}(0, 1)$, $\xi_1, \ldots, \xi_{m_1}$ are i.i.d.~copies of $\mathcal{N}(0, 1)$, and the default values of $\mu$ and $m_1$ (the number of true effects) are set to $4$ and $100$, respectively. In summary, the FDR of PrivateBHq is empirically controlled at $q$ in almost all scenarios, though
Theorem~\ref{thm:pbhq_fdr} is only concerned with $\fdr_k$ for $k \ge 2$. Moreover, PrivateBonf is uniformly the least powerful among the three procedures. This is not surprising given that PrivateBonf is inherently developed for FWER control. Looking closely, the performance of PrivateBHq is comparable to that of BHq when $\epsilon$ is not too small and $\eta$ is not too large. Notably, the power of PrivateBHq deteriorates when the number of true effects exceeds 150, which is due to the truncation of the PrivateBHq procedure at $m' = 100$.

For completeness, we refer interested readers to a set of numerical comparisons of an $\fdr_k$-controlling procedure \cite{kfdr,kfdrfollow}, PrivateBHq, and PrivateBonf in the appendix.

%%% Local Variables:
%%% mode: latex
%%% TeX-master: "paper_annals"
%%% End:

\section{Discussion}
\label{sec:discussion}

This paper has developed a privacy-preserving multiple testing procedure termed PrivateBHq for FDR control. On the privacy side, we propose a new notion of sensitivity tailored to $p$-values and recognize the sequential nature of the BHq (step-down) procedure so as to keep PrivateBHq efficient under the differential privacy constraint. Differential privacy of this whole pipeline follows from the composition nature of differential privacy. On the statistical side, as a major contribution of the paper, it is proved that a large class of multiple testing procedures, including the step-up, step-down, and PrivateBHq procedures, control the $\fdr_k$ only provided the joint independence of the true null test statistics. A novel aspect of this result lies in the absence of any assumption on the dependence between the true nulls and false nulls. Notably, some recent progress has been made along this direction using the the FDR-linking technique~\cite{fdrlinking}.

Looking forward, our work raises a number of open questions. First, it
would be interesting to take into account prior knowledge, such as the
importance of hypotheses and beliefs about which are true nulls, into
the design of a differentially private procedure. Second, it would be of interest to develop private procedures for control of other popular error rates such as the $q$-value~\cite{storey2003statistical}. Moreover, it is natural to
wonder if the bound in Theorem~\ref{thm:robustfdr} can improve by
imposing some structure on the dependence between the true null and
false null test statistics. Third, recognizing the vital importance of \peel in our PrivateBHq, an interesting direction is to investigate alternatives to \peel, such as the oneshot approach to the problem of private top-$k$ selection~\cite{qiao2021oneshot}. Last, it would be interesting to consider other notions of privacy such as concentrated differential privacy and Gaussian differential privacy \cite{dwork2016concentrated,bun2016concentrated,dong2019gaussian}.

%%, which has been shown to achieve better accuracy than the usual
%%differential privacy.
%%In addition, recent years have seen a flurry of exciting activities in designing multiple testing procedures that incorporate data structures, including
% streaming hypothesis testing \cite{javanmard2018online}, group
% structures \cite{barber2017p,bogomolov2017testing,liu2016new}, and
% linear regression \cite{bogdan2015slope}. Arguably, privacy
% concerns arise from applying these new procedures and thus their
% private versions would be appreciated. 

Finally, we wish to make a connection to a remarkable property of differential privacy: it protects against false discoveries due to adaptive data analysis, where an analysis is informed by prior interactions with the same database \cite{DworkFHPRR14,dwork2015generalization,bassily2016algorithmic}. Adaptivity is ubiquitous in practice as the analyst is often not clear a priori what are the right questions to ask about a database. In the multiple testing context, this issue arises when hypotheses are adaptively selected based on prior discoveries. A question of great interest is to develop a multiple testing procedure that continues to preserve privacy in the presence of adaptivity.

\section*{Acknowledgements}
We would like to thank Michal Linial, Daniel Rubin, Abba Krieger, Xinran Li, and Sanat Sarkar for helpful discussions and useful comments about an early version of the manuscript. We are grateful to three anonymous reviewers for their constructive comments. C.~D.~and W.~S.~were supported in part by the NSF via grant CCF-1763314.

{\small
\bibliographystyle{abbrv}
\bibliography{refs}

\begin{thebibliography}{10}

\bibitem{abadi2016deep}
M.~Abadi, A.~Chu, I.~Goodfellow, H.~B. McMahan, I.~Mironov, K.~Talwar, and
  L.~Zhang.
\newblock Deep learning with differential privacy.
\newblock In {\em ACM SIGSAC Conference on Computer and Communications
  Security}, pages 308--318, 2016.

\bibitem{barak2007privacy}
B.~Barak, K.~Chaudhuri, C.~Dwork, S.~Kale, F.~McSherry, and K.~Talwar.
\newblock Privacy, accuracy, and consistency too: a holistic solution to
  contingency table release.
\newblock In {\em Proceedings of the twenty-sixth ACM SIGMOD-SIGACT-SIGART
  Symposium on Principles of Database Systems}, pages 273--282. ACM, 2007.

\bibitem{bassily2016algorithmic}
R.~Bassily, K.~Nissim, A.~Smith, T.~Steinke, U.~Stemmer, and J.~Ullman.
\newblock Algorithmic stability for adaptive data analysis.
\newblock In {\em Proceedings of the 48th annual ACM Symposium on Theory of
  Computing}, pages 1046--1059, 2016.

\bibitem{beimel2010bounds}
A.~Beimel, S.~P. Kasiviswanathan, and K.~Nissim.
\newblock Bounds on the sample complexity for private learning and private data
  release.
\newblock In {\em Theory of Cryptography Conference}, pages 437--454. Springer,
  2010.

\bibitem{BenjaminiH95}
Y.~Benjamini and Y.~Hochberg.
\newblock Controlling the false discovery rate -- {A} practical and powerful
  approach to multiple testing.
\newblock {\em Journal of the Royal Statistics Society: Series B (Statistical
  Methodology)}, 57(1):289--300, 1995.

\bibitem{benjamini2006adaptive}
Y.~Benjamini, A.~M. Krieger, and D.~Yekutieli.
\newblock Adaptive linear step-up procedures that control the false discovery
  rate.
\newblock {\em Biometrika}, 93(3):491--507, 2006.

\bibitem{prds}
Y.~Benjamini and D.~Yekutieli.
\newblock The control of the false discovery rate in multiple testing under
  dependency.
\newblock {\em The Annals of Statistics}, 29(4):1165--1188, 2001.

\bibitem{blanchard2008two}
G.~Blanchard and E.~Roquain.
\newblock Two simple sufficient conditions for {FDR} control.
\newblock {\em Electronic Journal of Statistics}, 2:963--992, 2008.

\bibitem{blanchard2009adaptive}
G.~Blanchard and E.~Roquain.
\newblock Adaptive false discovery rate control under independence and
  dependence.
\newblock {\em Journal of Machine Learning Research}, 10(12):2837--2871, 2009.

\bibitem{smallDB}
A.~Blum, K.~Ligett, and A.~Roth.
\newblock A learning theory approach to noninteractive database privacy.
\newblock {\em Journal of the ACM (JACM)}, 60(2):12, 2013.

\bibitem{bogdan2015slope}
M.~Bogdan, E.~van~den Berg, C.~Sabatti, W.~J. Su, and E.~J. Cand{\`e}s.
\newblock {SLOPE} -- adaptive variable selection via convex optimization.
\newblock {\em The Annals of Applied Statistics}, 9(3):1103, 2015.

\bibitem{bu2020deep}
Z.~Bu, J.~Dong, Q.~Long, and W.~J. Su.
\newblock Deep learning with {G}aussian differential privacy.
\newblock {\em Harvard Data Science Review}, 2020(23), 2020.

\bibitem{bun2016concentrated}
M.~Bun and T.~Steinke.
\newblock Concentrated differential privacy: Simplifications, extensions, and
  lower bounds.
\newblock In {\em Theory of Cryptography Conference}, pages 635--658. Springer,
  2016.

\bibitem{BunUV18}
M.~Bun, J.~Ullman, and S.~P. Vadhan.
\newblock Fingerprinting codes and the price of approximate differential
  privacy.
\newblock {\em {SIAM} J. Comput.}, 47(5):1888--1938, 2018.

\bibitem{chaudhuri2011differentially}
K.~Chaudhuri, C.~Monteleoni, and A.~D. Sarwate.
\newblock Differentially private empirical risk minimization.
\newblock {\em Journal of Machine Learning Research}, 12(Mar):1069--1109, 2011.

\bibitem{clarke2009robustness}
S.~Clarke and P.~Hall.
\newblock Robustness of multiple testing procedures against dependence.
\newblock {\em The Annals of Statistics}, 37(1):332--358, 2009.

\bibitem{de2007extreme}
L.~De~Haan and A.~Ferreira.
\newblock {\em Extreme value theory: an introduction}.
\newblock Springer Science \& Business Media, 2007.

\bibitem{dong2019gaussian}
J.~Dong, A.~Roth, and W.~J. Su.
\newblock Gaussian differential privacy.
\newblock {\em arXiv preprint arXiv:1905.02383}, 2019.

\bibitem{dudoit2008multiple}
S.~Dudoit, M.~J. Van Der~Laan, and M.~J. van~der Laan.
\newblock {\em Multiple testing procedures with applications to genomics}.
\newblock Springer, 2008.

\bibitem{durrett}
R.~Durrett.
\newblock {\em Probability: Theory and Examples}.
\newblock Cambridge University Press, 2010.

\bibitem{dwork2015generalization}
C.~Dwork, V.~Feldman, M.~Hardt, T.~Pitassi, O.~Reingold, and A.~Roth.
\newblock Generalization in adaptive data analysis and holdout reuse.
\newblock In {\em Advances in Neural Information Processing Systems}, pages
  2350--2358, 2015.

\bibitem{DworkFHPRR14}
C.~Dwork, V.~Feldman, M.~Hardt, T.~Pitassi, O.~Reingold, and A.~Roth.
\newblock The reusable holdout: {P}reserving validity in adaptive data
  analysis.
\newblock {\em Science}, 349(6248):636--638, 2015.

\bibitem{DworkKMMN06}
C.~Dwork, K.~Kenthapadi, F.~McSherry, I.~Mironov, and M.~Naor.
\newblock Our data, ourselves: Privacy via distributed noise generation.
\newblock In {\em Proceedings of EUROCRYPT}, pages 486--503, 2006.

\bibitem{dwork2009differential}
C.~Dwork and J.~Lei.
\newblock Differential privacy and robust statistics.
\newblock In {\em Proceedings of the 41st annual ACM Symposium on Theory of
  Computing}, pages 371--380. ACM, 2009.

\bibitem{DworkMNS06}
C.~Dwork, F.~McSherry, K.~Nissim, and A.~Smith.
\newblock Calibrating noise to sensitivity in private data analysis.
\newblock In {\em Theory of Cryptography}, pages 265--284. Springer, 2006.

\bibitem{DworkR14}
C.~Dwork and A.~Roth.
\newblock The algorithmic foundations of differential privacy.
\newblock {\em Foundations and Trends in Theoretical Computer Science},
  9(3-4):211--407, 2014.

\bibitem{DworkRV10}
C.~Dwork, G.~Rothblum, and S.~Vadhan.
\newblock Boosting and differential privacy.
\newblock In {\em Foundations of Computer Science (FOCS)}, 2010.

\bibitem{dwork2016concentrated}
C.~Dwork and G.~N. Rothblum.
\newblock Concentrated differential privacy.
\newblock {\em arXiv preprint arXiv:1603.01887}, 2016.

\bibitem{dwork2015robust}
C.~Dwork, A.~Smith, T.~Steinke, J.~Ullman, and S.~Vadhan.
\newblock Robust traceability from trace amounts.
\newblock In {\em IEEE 56th Annual Symposium on Foundations of Computer
  Science}, pages 650--669. IEEE, 2015.

\bibitem{finner2009false}
H.~Finner, T.~Dickhaus, and M.~Roters.
\newblock On the false discovery rate and an asymptotically optimal rejection
  curve.
\newblock {\em The Annals of Statistics}, 37(2):596--618, 2009.

\bibitem{gaboardi2016differentially}
M.~Gaboardi, H.~Lim, R.~Rogers, and S.~Vadhan.
\newblock Differentially private {C}hi-squared hypothesis testing: Goodness of
  fit and independence testing.
\newblock In {\em International Conference on Machine Learning}, pages
  2111--2120, 2016.

\bibitem{stepdown}
Y.~Gavrilov, Y.~Benjamini, and S.~K. Sarkar.
\newblock An adaptive step-down procedure with proven {FDR} control under
  independence.
\newblock {\em The Annals of Statistics}, 37(2):619--629, 2009.

\bibitem{ge2008some}
Y.~Ge, S.~C. Sealfon, and T.~P. Speed.
\newblock Some step-down procedures controlling the false discovery rate under
  dependence.
\newblock {\em Statistica Sinica}, 18(3):881--904, 2008.

\bibitem{hall2013differential}
R.~Hall, A.~Rinaldo, and L.~Wasserman.
\newblock Differential privacy for functions and functional data.
\newblock {\em Journal of Machine Learning Research}, 14(2):703--727, 2013.

\bibitem{hardt2012simple}
M.~Hardt, K.~Ligett, and F.~McSherry.
\newblock A simple and practical algorithm for differentially private data
  release.
\newblock In {\em Advances in Neural Information Processing Systems}, pages
  2339--2347, 2012.

\bibitem{hardt2010multiplicative}
M.~Hardt and G.~N. Rothblum.
\newblock A multiplicative weights mechanism for privacy-preserving data
  analysis.
\newblock In {\em 2010 IEEE 51st Annual Symposium on Foundations of Computer
  Science}, pages 61--70. IEEE, 2010.

\bibitem{heesen2015}
P.~Heesen and A.~Janssen.
\newblock Inequalities for the false discovery rate ({FDR}) under dependence.
\newblock {\em Electronic Journal of Statistics}, 9(1):679--716, 2015.

\bibitem{homer2008resolving}
N.~Homer, S.~Szelinger, M.~Redman, D.~Duggan, W.~Tembe, J.~Muehling, J.~V.
  Pearson, D.~A. Stephan, S.~F. Nelson, and D.~W. Craig.
\newblock Resolving individuals contributing trace amounts of {DNA} to highly
  complex mixtures using high-density {SNP} genotyping microarrays.
\newblock {\em PLoS genetics}, 4(8):e1000167, 2008.

\bibitem{karwa2016inference}
V.~Karwa and A.~Slavkovi{\'c}.
\newblock Inference using noisy degrees: {D}ifferentially private $\beta$-model
  and synthetic graphs.
\newblock {\em The Annals of Statistics}, 44(1):87--112, 2016.

\bibitem{karwa2017confidence}
V.~Karwa and S.~Vadhan.
\newblock Finite sample differentially private confidence intervals.
\newblock In {\em 9th Innovations in Theoretical Computer Science Conference
  (ITCS 2018)}. Schloss Dagstuhl-Leibniz-Zentrum fuer Informatik, 2018.

\bibitem{kaye2012tension}
J.~Kaye.
\newblock The tension between data sharing and the protection of privacy in
  genomics research.
\newblock {\em Annual Review of Genomics and Human Genetics}, 13(1):415--431,
  2012.

\bibitem{lehmann1966some}
E.~L. Lehmann.
\newblock Some concepts of dependence.
\newblock {\em The Annals of Mathematical Statistics}, 37(5):1137--1153, 1966.

\bibitem{lei2016differentially}
J.~Lei, A.-S. Charest, A.~Slavkovi{\'c}, A.~Smith, and S.~Fienberg.
\newblock Differentially private model selection with penalized and constrained
  likelihood.
\newblock {\em Journal of the Royal Statistical Society: Series A (Statistics
  in Society)}, 2016.

\bibitem{littlewood1969probability}
J.~Littlewood.
\newblock On the probability in the tail of a binomial distribution.
\newblock {\em Advances in Applied Probability}, 1(1):43--72, 1969.

\bibitem{mcdaneld2014genomewide}
T.~McDaneld, L.~Kuehn, M.~Thomas, W.~Snelling, T.~Smith, E.~Pollak, J.~Cole,
  and J.~Keele.
\newblock Genomewide association study of reproductive efficiency in female
  cattle.
\newblock {\em Journal of Animal Science}, 92(5):1945--1957, 2014.

\bibitem{mckay1989littlewood}
B.~D. McKay.
\newblock On littlewood's estimate for the binomial distribution.
\newblock {\em Advances in Applied Probability}, 21(2):475--478, 1989.

\bibitem{mcsherry2007mechanism}
F.~McSherry and K.~Talwar.
\newblock Mechanism design via differential privacy.
\newblock In {\em 48th Annual IEEE Symposium on Foundations of Computer
  Science}, pages 94--103, 2007.

\bibitem{neveu1975discrete}
J.~Neveu.
\newblock {\em Discrete-parameter martingales}.
\newblock North Holland, Amsterdam, 1975.

\bibitem{qiao2021oneshot}
G.~Qiao, W.~J. Su, and L.~Zhang.
\newblock Oneshot differentially private top-k selection.
\newblock In {\em International Conference on Machine Learning}, 2021.

\bibitem{roquain2011exact}
E.~Roquain and F.~Villers.
\newblock Exact calculations for false discovery proportion with application to
  least favorable configurations.
\newblock {\em The Annals of Statistics}, 39(1):584--612, 2011.

\bibitem{sarkarstepwise}
S.~K. Sarkar.
\newblock Some results on false discovery rate in stepwise multiple testing
  procedures.
\newblock {\em The Annals of Statistics}, 30(1):239--257, 2002.

\bibitem{kfdr}
S.~K. Sarkar.
\newblock Stepup procedures controlling generalized {FWER} and generalized
  {FDR}.
\newblock {\em The Annals of Statistics}, 35(6):2405--2420, 2007.

\bibitem{sarkar2008methods}
S.~K. Sarkar.
\newblock On methods controlling the false discovery rate.
\newblock {\em Sankhy{\=a}: The Indian Journal of Statistics, Series A},
  70:135--168, 2008.

\bibitem{kfdrfollow}
S.~K. Sarkar and W.~Guo.
\newblock On a generalized false discovery rate.
\newblock {\em The Annals of Statistics}, 37(3):1545--1565, 2009.

\bibitem{sarkar1969some}
T.~K. Sarkar.
\newblock Some lower bounds of reliability.
\newblock Technical report, DTIC Document, 1969.

\bibitem{storey2003positive}
J.~D. Storey.
\newblock The positive false discovery rate: a {B}ayesian interpretation and
  the $q$-value.
\newblock {\em The Annals of Statistics}, 31(6):2013--2035, 2003.

\bibitem{storey2004strong}
J.~D. Storey, J.~E. Taylor, and D.~Siegmund.
\newblock Strong control, conservative point estimation and simultaneous
  conservative consistency of false discovery rates: a unified approach.
\newblock {\em Journal of the Royal Statistical Society: Series B (Statistical
  Methodology)}, 66(1):187--205, 2004.

\bibitem{storey2003statistical}
J.~D. Storey and R.~Tibshirani.
\newblock Statistical significance for genomewide studies.
\newblock {\em Proceedings of the National Academy of Sciences},
  100(16):9440--9445, 2003.

\bibitem{fdrlinking}
W.~J. Su.
\newblock The {FDR}-linking theorem.
\newblock {\em arXiv preprint arXiv:1812.08965}, 2018.

\bibitem{tamhane1998}
A.~C. Tamhane, W.~Liu, and C.~W. Dunnett.
\newblock A generalized step-up-down multiple test procedure.
\newblock {\em The Canadian Journal of Statistics}, 26(2):353--363, 1998.

\bibitem{uhlerop2013privacy}
C.~Uhler, A.~Slavkovi{\'c}, and S.~E. Fienberg.
\newblock Privacy-preserving data sharing for genome-wide association studies.
\newblock {\em The Journal of Privacy and Confidentiality}, 5(1):137, 2013.

\bibitem{wang2018revisiting}
Y.-X. Wang.
\newblock Revisiting differentially private linear regression: optimal and
  adaptive prediction \& estimation in unbounded domain.
\newblock {\em arXiv preprint arXiv:1803.02596}, 2018.

\bibitem{wasserman2010}
L.~Wasserman and S.~Zhou.
\newblock A statistical framework for differential privacy.
\newblock {\em Journal of the American Statistical Association},
  105(489):375--389, 2010.

\bibitem{facebook}
Wikipedia.
\newblock Facebook--{C}ambridge {A}nalytica data scandal.
\newblock
  \url{https://en.wikipedia.org/wiki/Facebook%E2%80%93Cambridge_Analytica_data_scandal},
  2018.

\bibitem{yu2014scalable}
F.~Yu, S.~E. Fienberg, A.~Slavkovi{\'c}, and C.~Uhler.
\newblock Scalable privacy-preserving data sharing methodology for genome-wide
  association studies.
\newblock {\em Journal of Biomedical Informatics}, 50:133--141, 2014.

\end{thebibliography}
}

% \appendix
% \input{appendix}
% \input{simulation}

\clearpage
\appendix
\newcommand{\kl}{\operatorname{KL}}

\section{Proofs}
\label{sec:technical-proofs}

This section proves all results made without proof in the main text. Below, the proofs are listed in order of appearance of their corresponding results.

\begin{proof}[Proof of Lemma \ref{lm:pm_dp}]
For an arbitrary index $1 \le j \le m $ and a measurable set $S \subset \R$, it suffices to prove that
\[
\frac{\P(\tilde f_j ~ \text{is the smallest and}~ f_j(D) + Z \in S)}{\P(\tilde f'_j ~ \text{is the smallest and}~ f_j(D') + Z \in S)} \le \ep{\epsilon},
\]
where $\tilde f'_j$ is the counterpart of $\tilde f_j$ evaluated on an adjacent database $D'$. This inequality is equivalent to
\begin{equation}\label{eq:pm_proof}
\frac{\P(\tilde f_j ~ \text{is smallest}) \P(f_j(D) + Z \in S | \tilde f_j ~ \text{is smallest})}{\P(\tilde f'_j ~ \text{is smallest}) \P(f_j(D') + Z \in S | \tilde f'_j ~ \text{is smallest})} \le \ep{\e}.
\end{equation}
First, releasing the index of the smallest noisy count is $(\epsilon/2, 0)$-differentially private, which has been proven by Claim 3.9 in Section 3.3 of \cite{DworkR14}. That is,
\[
 \frac{\P(\tilde f_j ~ \text{is smallest})}{\P(\tilde f'_j ~ \text{is smallest})} \le \ep{\epsilon/2}.
\]
Second, observe that by assumption $|f_j(D) - f_j(D')| \le \Delta$. Then Lemma \ref{lm:comp} shows that
\[
\frac{\P(f_j(D) + Z \in S | \tilde f_j ~ \text{is smallest})}{\P(f_j(D') + Z \in S | \tilde f'_j ~ \text{is smallest})} \le \ep{\epsilon/2}.
\]
Combining the last two displays concludes that \eqref{eq:pm_proof} is bounded by $\ep{\epsilon}$. This finishes the proof.

\end{proof}

\begin{proof}[Proof of Example \ref{ex:binomial}]
We use $t - 1$ in place of $t$. Under the constraint that $p(D), p(D') \ge \nu$, we aim to prove that 
\[
\frac{ {n\choose t - 1}}{\sum_{i=t}^n {n\choose{i}}} \le \eta
\]
if $\eta \asymp \sqrt{(\log n) /n}$. Without loss of generality, assume $t \ge n/2$, where a well-known result is
\begin{equation}\label{eq:kl_wk}
\sum_{i=t}^n \frac1{2^n} {n\choose{i}} \le \ep{- n \kl(\frac{t}{n}, \frac12)}.
\end{equation}
Above, the Kullback--Leibler divergence is defined as
\[
\kl(a, b) = a\log\frac{a}{b} + (1 - a) \log\frac{1 - a}{1 - b}.
\]
It is easy to show that
\[
\kl\left(a, \frac12 \right) \ge 2\left(a - \frac12\right)^2.
\]
Therefore, plugging
\[
\sum_{i=t}^n \frac1{2^n} {n\choose{i}} \ge \nu = m^{-1-c} = \frac1{\mathtt{poly}(n)}
\]
into \eqref{eq:kl_wk}, we get
\[
\frac{t}{n} \le \frac12 + O\left(\sqrt{\frac{\log n}{n}} \right)
\]
or, put differently,
\[
t \le \frac{n}{2} + O\left(\sqrt{n\log n} \right).
\]
Therefore, we can assume $t \le 7n/8$. Provided that $n/2 \le t \le 7n/8$, we can apply Littlewood's theorem \cite{littlewood1969probability,mckay1989littlewood}. Letting $u = (2t - n)/\sqrt{n}, \rho = 1 - t/n$ and $\Xi(x) = \Phi(-x)/\phi(x)$, where $\Phi(x)$ and $\phi(x)$ are the cumulative distribution function and density function of $\mathcal{N}(0,1)$ respectively, this theorem gives
\[
\sum_{i=t}^n \frac1{2^n} {n \choose i} = \left( 1 + O(1/n) \right)\Phi(-u) \ep{A_1 + A_2/\sqrt{\rho(1 - \rho)n}},
\]
where
\[
A_1 = \frac{u^2}{2} - \left(t - \frac12 \right) \log \frac{2t}{n} - \left(n - t + \frac12 \right) \log \frac{2(n - t)}{n}
\]
and
\[
A_2 = \frac{1 - 2\rho}{6}\left[\frac{1 - u^2}{\Xi(u)} + u^3  \right] + \frac{1/\Xi(u) - u}{2}.
\]
Because $t \le n/2 + O(\sqrt{n \log n})$ as $n \goto \infty$, we have $u = O(\sqrt{\log n}), \rho = \frac12 - o(1)$. Making use of the fact that $\Xi(u) = (1 + o(1))/u$, we see that
\begin{equation}\label{eq:bi_tail}
\begin{aligned}
\sum_{i=t}^n \frac1{2^n} {n \choose i} &= \left( 1 + O(1/n) \right)\Phi(-u) \ep{A_1} (1 + o(1))\\
&= \left( 1 + o(1) \right) \frac{\Phi(-u)}{\sqrt{2\pi} \phi(u)} \ep{- (t - \frac12) \log \frac{2t}{n} - (n - t + \frac12) \log \frac{2(n - t)}{n}}\\
&= \left( 1 + o(1) \right) \frac{\Phi(-u)}{\sqrt{2\pi} \phi(u)} \ep{- t\log \frac{2t}{n} - (n - t) \log \frac{2(n - t)}{n}}\\
&= \left( 1 + o(1) \right) \frac{\Phi(-u)}{\sqrt{2\pi} \phi(u)} \ep{- n \kl(\frac{t}{n}, \frac12)}\\
&= \left( 1 + o(1) \right) \cdot \left( 1 + o(1) \right) \frac1{\sqrt{2\pi} u} \ep{- n \kl(\frac{t}{n}, \frac12)}\\
& \ge \frac{O(1)}{\sqrt{2\pi \log n}} \ep{- n \kl(\frac{t}{n}, \frac12)}.
\end{aligned}
\end{equation}

Next, we consider
\[
\frac1{2^n} {n \choose t-1}
\]
By Stirling's formula and using the fact that $t = (0.5 + o(1))n$, we get
\[
\begin{aligned}
\frac1{2^n} {n \choose t-1} &= \frac{t}{n+1 - t}\frac1{2^n} {n \choose t} \\
                                         &= (1+ o(1)) \frac1{2^n} {n \choose t} \\
                                         &= (1+ o(1))  \sqrt{\frac2{\pi n}} \cdot \frac{n^n}{2^n t^{t} (n - t)^{n - t}}\\
                                         &= (1+ o(1))  \sqrt{\frac2{\pi n}} \cdot \ep{- n \kl(\frac{t}{n}, \frac12)}
\end{aligned}
\]
Thus, we get
\[
\begin{aligned}
\frac{\frac1{2^n} {n \choose t-1}}{\sum_{i=t}^n \frac1{2^n} {n \choose i}} &\le O(1) \frac{(1+ o(1))  \sqrt{\frac2{\pi n}} \cdot \ep{- n \kl(\frac{t}{n}, \frac12)}}{\frac1{\sqrt{2\pi \log n}} \ep{- n \kl(\frac{t}{n}, \frac12)}}\\
& = O\left(\sqrt{\frac{\log n}{n}}\right).
\end{aligned}
\]
Thus, with $\nu = m^{-1-c}$, we can choose $\eta \asymp \sqrt{\frac{\log n}{n}}$.

\end{proof}

%% example 2
\begin{proof}[Proof of Example \ref{ex:exponential}]
Let $\zeta, \zeta_1, \ldots, \zeta_n$ be \iid~exponential variable with $\lambda = 1$ truncated at $A$. Consider the cumulant generating function
\[
\kappa(\theta) = \log \E \ep{\theta \zeta}.
\]
As in the proof of Example \ref{ex:binomial}, it does not lose generality by assuming $t > n \E \zeta$. Write $a = t/n > \E \zeta$ and let $\theta_{a}$ be the root of the saddle-point equation
\begin{equation}\label{eq:saddle}
\kappa'(\theta_a) = a.
\end{equation}
In particular, $\E_{\theta_a} \zeta = a$. Note that under $\E_{\theta}$ the density of $\zeta$ is
\[
\frac{\lambda \ep{-\lambda x}}{ 1 - \ep{-A\lambda}} \ep{\theta x - \kappa(\theta)} \cdot \bm{1}(0 \le x \le A).
\]
Through exponential tilting, we get
\[
\begin{aligned}
\P(T \ge na) &= \P\left( \zeta_1 + \cdots + \zeta_n \ge na \right) \\
&= \ep{-n(a \theta_a - \kappa(\theta_a))} \E_{\theta_a}\ep{-\theta_a(T - na)} \bm{1}(T \ge na).
\end{aligned}
\]
Using saddle point approximation, we get
\[
\E_{\theta_a}\ep{-\theta_a(T - na)} \bm{1}(T \ge na) = \frac{1 + o(1)}{\sqrt{2\pi \kappa''(\theta_a) n} \theta_a}
\]
Thus, we have
\begin{equation}\label{eq:na_kappa}
\P(T \ge na) = (1+o(1)) \frac{\ep{-n(a \theta_a - \kappa(\theta_a))}}{\sqrt{2\pi \kappa''(\theta_a) n} \theta_a}.
\end{equation}

Next, we evaluate $\kappa''(\theta_a)$ and $\theta_a$. Denote by $\mu$ and $\sigma^2$ the mean and variance of 
$\zeta$, respectively. We get $\theta_a = o(1)$ and $\kappa''(\theta_a) = \var_{\theta_a}(\zeta) = \sigma^2 + o(1)$. In particular, from \eqref{eq:saddle} we get
\[
\theta_a = (1 + o(1))\frac{a - \mu}{\sigma^2},
\]
which gives
\[
a \theta_a - \kappa(\theta_a) = (1 + o(1)) \frac{(a - \mu)^2}{2\sigma^2}.
\]
Plugging this display into
\[
\ep{-n(a \theta_a - \kappa(\theta_a))} \ge \nu = m^{-1 - c} = \frac1{\mathtt{poly}(n)}.
\]
gives
\[
t = n \mu + O(\sqrt{n \log n}).
\]
Therefore, we get
\[
\E_{\theta_a}\ep{-\theta_a(T - na) }\bm{1}(T \ge na) = \frac{1 + o(1)}{\sqrt{2\pi \kappa''(\theta_a) n} \theta_a} = \frac{O(1)}{\sqrt{\log n}},
\]
which together with \eqref{eq:na_kappa} yields
\begin{equation}\label{eq:tail_exp}
\P\left( T \ge na \right)  = \frac{\ep{-n(a \theta_a - \kappa(\theta_a))}}{\sqrt{\log n}}.
\end{equation}

To evaluate the ratio
\[
\frac{\P\left( na - A \le T < na \right)}{\P\left( T \ge na \right)},
\]
it remains to approximate $\P\left( na - A \le T < na \right)$. We use the local central limit theorem to do this. Explicitly, using the local central limit theorem, we get
\begin{equation}\label{eq:local_clt}
\begin{aligned}
\P\left( t - A \le T < t \right) &= \ep{-n(a \theta_a - \kappa(\theta_a))} \E_{\theta_a}\ep{-\theta_a(T - na)} \bm{1}(na - A \le T < na)\\
& \le  \ep{-n(a \theta_a - \kappa(\theta_a))} \E_{\theta_a}\ep{\theta_a A}  \bm{1}(na - A \le T < na)\\
& = \ep{\theta_a A} \ep{-n(a \theta_a - \kappa(\theta_a))} \P_{\theta_a}(na - A \le T < na)\\
& = \ep{\theta_a A} \ep{-n(a \theta_a - \kappa(\theta_a))} \left( \frac{A}{\sqrt{2\pi n} \sigma_a} + o(1/\sqrt{n}) \right)\\
& = (1 + o(1))\frac{A\ep{\theta_a A} \ep{-n(a \theta_a - \kappa(\theta_a))}}{\sqrt{2\pi n} \sigma_a},
\end{aligned}
\end{equation}
where $\sigma_a$ is the standard deviation of $\zeta$ tilted at $\theta_a$. That is, $\sigma_a = \sqrt{\var_{\theta_a}{\zeta}} = \sigma + o(1)$.

Finally, combing \eqref{eq:tail_exp} and \eqref{eq:local_clt} gives
\[
\frac{\P\left( na - A \le T < na \right)}{\P\left( T \ge na \right)} \le O(1) \sqrt{\frac{\log n}{n}}.
\]
As such, we can choose
\[
\eta = O(1) \sqrt{\frac{\log n}{n}}.
\]

\end{proof}

\begin{proof}[An example of $p$-value computation from
  \cite{uhlerop2013privacy,yu2014scalable}] 
Now we show that the contingency table example in Section~\ref{sec:prelim:sensitivity} does not give $p$-values that are $(\eta, \nu)$-sensitive with
  some $\eta, \nu \goto 0$ even if $n \goto \infty$. 
%   \begin{table}[!htp]
% %\hfill
% \centering
%     \begin{tabular}{c|c|c}
%    & \multicolumn{1}{c}{Case} & \multicolumn{1}{c}{Control}\\
%  \hline
% Exposed & $a$ & $a$\\
% \hline
% Unexposed & $\frac{n}{2} - a$ & $\frac{n}{2} - a$\\
% \hline
%     \end{tabular}
% %\hfill
% \hspace{0.1\linewidth}
% \centering
%     \begin{tabular}{c|c|c}
%    & \multicolumn{1}{c}{Case} & \multicolumn{1}{c}{Control}\\
%  \hline
% Exposed & $a+1$ & $a$\\
% \hline
% Unexposed & $\frac{n}{2} - a - 1$ & $\frac{n}{2} - a$\\
% \hline
%     \end{tabular}
% \hfill
%     \caption{Two neighboring contingency tables.}
% \label{tab:cont}
%   \end{table}
In particular, we focus on two adjacent tables as shown in Table
\ref{tab:cont}. The $\chi^2$-statistic of the left table is
\[
\chi^2_L = 0
\]
because $a \times (n/2-a) - a \times (n/2-a) = 0$ and, as a consequence,
the corresponding $p$-value is
\[
p_L \approx \P(\chi_1^2 \ge \chi^2_L) = 1.
\]
Next, for the right table the statistic equals
\[
\begin{aligned}
\chi^2_R &= \frac{\left[(a+1)(n/2-a) - a(n/2-a-1) \right]^2}{n}\\
& \times \left[\frac1{\frac{n}{2} (2a+1)} + \frac1{\frac{n}{2} (2a+1)} +
  \frac1{\frac{n}{2} (n-2a-1)} + \frac1{\frac{n}{2} (n-2a-1)}\right]\\
& = \frac{n}{4} \times \left[\frac1{\frac{n}{2} (2a+1)} + \frac1{\frac{n}{2} (2a+1)} +
  \frac1{\frac{n}{2} (n-2a-1)} + \frac1{\frac{n}{2} (n-2a-1)}\right]\\
& = \frac12 \left[\frac1{2a+1} + \frac1{2a+1} +
  \frac1{n-2a-1} + \frac1{n-2a-1}\right]\\
& = \frac1{2a+1} + \frac1{n-2a-1}.
\end{aligned}
\]
Now, assuming $5 \le a \ll n$, we get
\[
\chi_R^2 = \frac1{2a+1} + o(1),
\]
leading to
\[
p_R \approx \P(\chi_1^2 \ge \chi_R^2) \approx 2\Phi\left(-\frac1{\sqrt{2a+1}} \right).
\]

Thus, in this example both $p_R$ and $p_L$ are bounded below away from 0 and the ratio
$2\Phi\left(-\frac1{\sqrt{2a+1}} \right)$ does not tend to 1 as $n
\goto \infty$. As a consequence of this, it is impossible to have both
vanishing $\eta$ and $\nu$ for this $p$-value computation.

\end{proof}

\begin{proof}[Proof of Theorem \ref{thm:main_pri}]
The PrivateBHq procedure acts on the intermediate results
\[
(i_1, \tilde \pi_{i_1}), \ldots, (i_k,\tilde \pi_{i_k})
\] 
provided by the \peel. Hence, Lemma \ref{lm:compo} implies that it suffices to establish the $(\e, \delta)$-differential privacy for \peel as a part of PrivateBHq. By Lemma \ref{lm:pm_dp}, each Private Min in \peel is $(\frac{2\epsilon}{\sqrt{10k \log(1/\delta)}}, 0)$-differentially private. Then Lemma \ref{lm:advancecomp} immediately asserts that \peel preserves $(\tilde\epsilon, \delta)$-differential privacy, where
\begin{equation}\label{eq:pri_use_comp}
\begin{aligned}
\tilde\epsilon &= \frac{2\epsilon}{\sqrt{10k \log(1/\delta)}} \sqrt{2k\log(1/\delta)} + k \frac{2\epsilon}{\sqrt{10k \log(1/\delta)}} (\ep{\frac{2\epsilon}{\sqrt{10k \log(1/\delta)}}} - 1)\\
&= \frac{2\epsilon}{\sqrt{5}} + \frac{2\epsilon\sqrt{k}}{\sqrt{10\log(1/\delta)}} (\ep{\frac{2\epsilon}{\sqrt{10k \log(1/\delta)}}} - 1).
\end{aligned}
\end{equation}
Recognizing that $\frac{2\epsilon}{\sqrt{10k \log(1/\delta)}} \le 0.0659$ under the assumptions $\epsilon \le 0.5, \delta \le 0.1$ and $k \ge 10$, we get
\[
\begin{aligned}
\frac{2\epsilon\sqrt{k}}{\sqrt{10\log(1/\delta)}} \left[\ep{\frac{2\epsilon}{\sqrt{10k \log(1/\delta)}}} - 1\right] &\le \frac{2\epsilon\sqrt{k}}{\sqrt{10\log(1/\delta)}} \times 1.034 \times \frac{2\epsilon}{\sqrt{10k \log(1/\delta)}}\\
& \le 0.0899 \epsilon.
\end{aligned}
\]
Substituting this line into \eqref{eq:pri_use_comp} yields
$\tilde\epsilon \le 2\epsilon/\sqrt{5} + 0.0899 \epsilon \le \epsilon$, as desired.

\end{proof}

\begin{proof}[Proof of Lemma~\ref{lm:submartingale}]
The proof is similar to that of Example 5.6.1 in \cite{durrett}.  By scaling,
assume that $\xi_i$ are exponential random variables with parameter 1,
i.e, $\E \xi_i = 1$. Note that $W_j$ is measurable with respect to
$\mathcal{F}_j$. In the proof, we first consider the conditional
expectation $\E( W_j^{-1}|\mathcal{F}_{j+1})$, then return to $\E(
W_j|\mathcal{F}_{j+1})$ by applying Jensen's inequality. Specifically, we have
\[
\E \left[ \frac{\xi_{l}}{j T_{m+1}}\big{|}\mathcal{F}_{j+1} \right] = \frac{1}{j T_{m+1}} \E(\xi_l |\mathcal{F}_{j+1})
\]
because $T_{m+1}$ is measurable in $\mathcal{F}_{j+1}$. Next, observe that by symmetry we get
\[
\E ( \xi_{l}|\mathcal{F}_{j+1}) = \E ( \xi_{k}|\mathcal{F}_{j+1})
\]
for any $l, k \le j + 1$. Combining the last two displays gives
\begin{align*}
\E\left[ \frac{T_j}{j T_{m+1}}\big{|}\mathcal{F}_{j+1} \right] &= \E \left[ \frac{T_j}{(j+1) T_{m+1}} \big{|}\mathcal{F}_{j+1} \right] + \sum_{l=1}^j \E \left[ \frac{\xi_l}{j(j+1) T_{m+1}}\big{|}\mathcal{F}_{j+1} \right] \\
&= \E \left[ \frac{T_j}{(j+1) T_{m+1}}\big{|}\mathcal{F}_{j+1} \right] + \sum_{l=1}^j \E \left[ \frac{\xi_{j+1}}{j(j+1) T_{m+1}}\big{|}\mathcal{F}_{j+1} \right]\\
&= \E \left[ \frac{T_{j+1}}{(j+1) T_{m+1}}\big{|}\mathcal{F}_{j+1} \right] \\
&= \frac{T_{j+1}}{(j+1) T_{m+1}}\,.
\end{align*}
To complete the proof, note that Jensen's inequality asserts that
\[
\E( W_j|\mathcal{F}_{j+1}) \geq \frac{1}{\E( W_j^{-1}|\mathcal{F}_{j+1})} = \frac{(j+1) T_{m+1}}{T_{j+1}} = W_{j+1}\,,
\]
as desired.
\end{proof}

\begin{proof}[Proof of Theorem \ref{thm:optimal}]
In addition to the proof sketch in Section \ref{sec:sharpening-constants}, it remains to show that
\[
\frac1{q}\E \left[ \frac{j^\star}{j^\star + \max\{\lceil m U_{(j^\star)}/q \rceil - j^\star, 0 \}} \right] \goto C_k 
\]
as $q \goto 0, m \goto \infty, m - m_0 \goto \infty$ and $m_0/m \goto 1$. Since $j^\star = O_{\P}(1)$ and $m U_{(j^\star)}$ is bounded below away from 0 as $m_0 \goto \infty$ and $m_0/m \goto 1$, one can show that\footnote{One needs to ensure that $m - m_0$ is larger than $\max\{\lceil m U_{(j^\star)}/q \rceil - j^\star, 0 \}$ with high probability. Thus, $q$ should tend to $0$ slowly as $m - m_0 \goto \infty$.}
\[
\max\{\lceil m U_{(j^\star)}/q \rceil - j^\star, 0 \} = m U_{(j^\star)}/q - j^\star + O_{\P}(1).
\]
Thus, we get
\[
\begin{aligned}
\frac1{q} \cdot \frac{j^\star}{j^\star + \max\{\lceil m U_{(j^\star)}/q \rceil - j^\star, 0 \}} &= \frac{j^\star}{m U_{(j^\star)}} + o_{\P}(1)\\
& = \max_{k \le j \le m_0}\frac{j}{m U_{(j)}} + o_{\P}(1).
\end{aligned}
\]
By assumption, we have
\[
\E\left[\max_{k \le j \le m_0}\frac{j}{m U_{(j)}}\right]  = (1 + o(1))\E\left[\max_{k \le j \le m_0}\frac{j}{m_0 U_{(j)}}\right] \goto C_k
\]
as $m_0 \goto \infty$.

To complete the proof, the last step is to show that
\[
\frac1{q} \cdot \frac{j^\star}{j^\star + \max\{\lceil m U_{(j^\star)}/q \rceil - j^\star, 0 \}}
\]
is bounded by an integrable random variable. To this end, we note that if $m U_{(j^\star)}/q \ge j^\star$, then
\[
\frac1{q} \cdot \frac{j^\star}{j^\star + \max\{\lceil m U_{(j^\star)}/q \rceil - j^\star, 0 \}} \le \frac{j^\star}{m U_{(j^\star)}},
\]
and otherwise $q \ge m U_{(j^\star)}/j^\star$, yielding
\[
\frac1{q} \cdot \frac{j^\star}{j^\star + \max\{\lceil m U_{(j^\star)}/q \rceil - j^\star, 0 \}} \le \frac1{q} \cdot \frac{j^\star}{j^\star + 0} \le \frac{j^\star}{m U_{(j^\star)}}.
\]
Note that $\frac{j^\star}{m U_{(j^\star)}}$ is bounded by an integrable random variable by resorting the representation using $T_j$.
\end{proof}

\begin{proof}[Proof of Lemma \ref{lm:uniform_int}]
We first prove the case where $k \ge 3$. The uniform integrability follows if we show
\begin{equation}\label{eq:unif_int}
\sup_{n \ge k}\E \left(\max_{k \le j \le n} \frac{j T_{n+1}}{n T_j} \right)^2 < \infty.
\end{equation}
As proved by Lemma \ref{lm:submartingale}, $\frac{j T_{n+1}}{n T_j}$ is a backward submartingale for $j = k, k+1, \ldots, n$. Thus, by Doob's maximal inequality, we get
\[
\E \left(\max_{k \le j \le n} \frac{j T_{n+1}}{n T_j} \right)^2 \le 4 \E \left(\frac{k T_{n+1}}{n T_k} \right)^2
\]
So, the proof would be completed if we verify
\[
\sup_{n \ge k}\E \left(\frac{k T_{n+1}}{n T_k} \right)^2 < \infty.
\]
To this end, note that
\[
\begin{aligned}
\E \frac{k^2 T^2_{n+1}}{n^2 T^2_k} &= \frac{k^2}{n^2}\E \frac{(T_k + T_{n+1} - T_k)^2}{T^2_k}\\
&= \frac{k^2}{n^2}\E \frac{T_k^2 + 2T_k (n+1 - k) + (n+1-k)^2 + (n+1-k)}{T^2_k}\\
&\le \frac{k^2}{n^2}\E \frac{T_k^2 + 2nT_k + n^2}{T^2_k}\\
&\le \frac{k^2}{n^2} + \frac{2k^2}{n} \E \frac1{T_k} + k^2 \E \frac1{T_k^2}\\
&\le 1 + 2k\E \frac1{T_k} + k^2 \E \frac1{T_k^2},\\
\end{aligned}
\]
which is finite if $k \ge 3$. Thus, \eqref{eq:unif_int} holds for $k \ge 3$. 

Next, we turn to the case of $k = 2$. Recognize that
\[
\begin{aligned}
\max_{2 \le j \le n} \frac{j T_{n+1}}{n T_j} &\le \frac{2 T_{n+1}}{n T_2} + \max_{3 \le j \le n} \frac{j T_{n+1}}{n T_j}\\
                                                          &= \frac2n +  \frac{2 (T_{n+1} - T_2)}{n T_2} + \max_{3 \le j \le n} \frac{j T_{n+1}}{n T_j}
\end{aligned}
\]
Since $\frac2n$ and $\max_{3 \le j \le n} \frac{j T_{n+1}}{n T_j}$ are both uniformly integrable, it is sufficiently to show the uniform integrability of $\frac{2 (T_{n+1} - T_2)}{n T_2}$ for $n \ge 2$. To this end, note that
\[
\begin{aligned}
\E \left[ \frac{2 (T_{n+1} - T_2)}{n T_2} \right]^{1.5} &= \frac{2^{1.5}}{n^{1.5}} \E (T_{n+1} - T_2)^{1.5} \E T_2^{-1.5} \\
&\le \frac{2^{1.5}}{n^{1.5}} \left[ \E (T_{n+1} - T_2)^2 \right]^{\frac{3}{4}} \E T_2^{-1.5} \\
&= \frac{2^{1.5}}{n^{1.5}} \left[ (n-1)^2 + n-1 \right]^{\frac{3}{4}} \E T_2^{-1.5} \\
& <  2^{1.5} \E T_2^{-1.5},
\end{aligned}
\]
which is finite. The proof is complete.

\end{proof}

\begin{proof}[Proof of Lemma \ref{lem:slack}]
We first consider part one. Note that
\begin{equation}\label{eq:lap_tail}
\P\left( Z_j \le -\lambda\log\frac{n}{2\alpha} \right) = \frac12 \times \frac{2\alpha}{n} = \frac{\alpha}{n}.
\end{equation}
Hence, taking a union bound, we get
\[
\begin{aligned}
\P\left( \text{all } Z_j >  -\lambda\log\frac{n}{2\alpha} \right) &= 1 - \P\left( \min Z_j \le  -\lambda\log\frac{n}{2\alpha} \right)\\
& \ge 1 - \sum_{j=1}^ n \P\left( Z_j \le  -\lambda\log\frac{n}{2\alpha} \right)\\
& =  1 - n \frac{\alpha}{n}\\
& = 1 - \alpha.
\end{aligned}
\]

The proof of part two is the follows the same reasoning except using 
\begin{equation}\nonumber
\P\left( |Z_j| \ge \lambda\log\frac{n}{\alpha} \right) = \frac{\alpha}{n}
\end{equation}
in place of \eqref{eq:lap_tail}.

\end{proof}

\begin{proof}[Proof of Theorem \ref{thm:power}]
In the proof below, we replace the assumption on the nominal level with the relaxed assumption that $q \ge 6m^{-1.5}$. Let $0 < \alpha, \alpha' < 1$ be specified later. Denote by $R'_{\textnormal{SD}} = \min\{R_{\textnormal{SD}}, m'\}$ and let $p_{j_1}, \ldots, p_{j_{R'_{\textnormal{SD}}}}$ be the $R'_{\textnormal{SD}}$ smallest $p$-values. By the construction of the step-down procedure, we get
\[
\max\{p_{j_1}, \ldots, p_{j_{R'_{\textnormal{SD}}}}\} \le \frac{q R'_{\textnormal{SD}}}{m}.
\]
First, we point out that the first $R'_{\textnormal{SD}}$ selections (without added noise) in the peeling stage of PrivateBHq, denoted as $\pi_{i_1}, \ldots, \pi_{i_{R'_{\textnormal{SD}}}}$, obey
\begin{equation}\label{eq:pi_pri_max}
\max\{\pi_{i_1}, \ldots, \pi_{i_{R'_{\textnormal{SD}}}}\} \le \log\frac{qR'_{\textnormal{SD}}}{m} + 2\lambda \log\frac{m^2}{\alpha}
\end{equation}
with probability at least $1 - \alpha$. To show this, we recognize that, with probability at least $1 - \alpha$, all the $mm'$ noise terms added by PrivateBHq are bounded in absolute value by
\begin{equation}\label{eq:noise_bound}
\lambda \log\frac{mm'}{\alpha} \le \lambda \log\frac{m^2}{\alpha}
\end{equation}
by using Lemma \ref{lem:slack}. Now, consider the $l$the step of invoking the peeling, where $1 \le l \le R'_{\textnormal{SD}}$. Note that at least one of $\pi_{j_1}, \ldots, \pi_{j_{R'_{\textnormal{SD}}}}$ remains on the list. Hence, at least one candidate for \RNM~is no greater than
\[
\log\max\left\{\frac{qR'_{\textnormal{SD}}}{m}, \nu \right\} + \lambda \log\frac{m^2}{\alpha} = \log\frac{qR'_{\textnormal{SD}}}{m} + \lambda \log\frac{m^2}{\alpha}.
\]
Then, it must hold that
\[
\pi_{i_l} + Z_{i_l}' \le \log \frac{qR'_{\textnormal{SD}}}{m} + \lambda \log\frac{m^2}{\alpha},
\]
where on the event \eqref{eq:noise_bound} $Z_{i_l}' \ge -\lambda \log\frac{m^2}{\alpha}$. Therefore, we get
\[
\pi_{i_l} \le \log \frac{qR'_{\textnormal{SD}}}{m} + 2\lambda \log\frac{m^2}{\alpha},
\]
thus confirming \eqref{eq:pi_pri_max}.

With added noise, the counts satisfy
\begin{equation}\label{eq:pi_tilde_lamn}
\begin{aligned}
\tilde\pi_{i_l} &\le \log \frac{qR'_{\textnormal{SD}}}{m} + 2\lambda \log\frac{m^2}{\alpha} + \lambda \log\frac{m'}{2\alpha'}\\
              & \le -\log\frac{m}{qR'_{\textnormal{SD}}} + 2\lambda \log\frac{m^2}{\alpha} + \lambda \log\frac{m}{2\alpha'}\\
\end{aligned}
\end{equation}
with probability at least $1 - \alpha - \alpha'$ for all $l = 1, \ldots, R'_{\textnormal{SD}}$. Next, take
\begin{equation}\label{eq:take_given_m}
2\lambda \log\frac{m^2}{\alpha} + \lambda \log\frac{m}{2\alpha'} \le  \frac{16\eta\sqrt{k\log(1/\delta)} \log m}{\e}
\end{equation}
as given for the moment. Then, from \eqref{eq:pi_tilde_lamn} we get
\begin{equation}\label{eq:16_10_logm}
\tilde\pi_{i_l} \le -\log\frac{m}{qR'_{\textnormal{SD}}} +  \frac{16\eta\sqrt{k\log(1/\delta)} \log m}{\e}
\end{equation}
for all $i = 1, \ldots, R'_{\textnormal{SD}}$ with probability at least $1 - \alpha - \alpha'$. Now, we turn to verify \eqref{eq:take_given_m}, which is equivalent to
\[
2 \log\frac{m^2}{\alpha} + \log\frac{m}{2\alpha'} \le \frac{16}{\sqrt{10}}\log m.
\]
To this end, it suffices to set $\alpha = m^{-0.014}$ and $\alpha' = m^{-0.029}/2$. Since both $\alpha, \alpha' \goto 0$ as $m \goto \infty$, we see that \eqref{eq:16_10_logm} holds with probability tending to one.

Recognizing \eqref{eq:16_10_logm}, to reject all of these $R'_{\textnormal{SD}}$ hypotheses using PrivateBHq, it is sufficient to have
\[
-\log\frac{m}{qR'_{\textnormal{SD}}} +  \frac{16\eta\sqrt{k\log(1/\delta)} \log m}{\e} \le  -\log\frac{m}{q' R'_{\textnormal{SD}}} - \frac{\eta \sqrt{10k\log\frac{1}{\delta}} \log\frac{6m'}{q'}}{\epsilon},
\]
which is equivalent to
\begin{equation}\label{eq:qq_relation}
\begin{aligned}
\log\frac{q'}{q} &\ge \frac{\eta \sqrt{10k\log\frac{1}{\delta}} \log\frac{6m'}{q'}}{\epsilon} + \frac{16\eta\sqrt{k\log(1/\delta)} \log m}{\e}\\
& = \frac{\eta\sqrt{k\log(1/\delta)} }{\e} \left[\sqrt{10}\log\frac{6m'}{q'} + 16 \log m \right].
\end{aligned}
\end{equation}
Since $q \ge 6m^{-1.5}$, we get
\[
\begin{aligned}
\sqrt{10}\log\frac{6m'}{q'} &\le \sqrt{10}\log\frac{6m}{q} \\
&\le \sqrt{10}\log\frac{6m}{6 m^{-1.5}} \\
&< 8\log m.
\end{aligned}
\]
Hence, \eqref{eq:qq_relation} is implied by
\[
\log\frac{q'}{q} \ge  \frac{24\eta\sqrt{k\log(1/\delta)} \log m}{\e},
\]
which is in fact an equality by assumption. Thus, the proof is complete.

\end{proof}

\begin{proof}[Proof of Corollary \ref{coro:examples}]
A careful look at the proof of Theorem \ref{thm:pbhq_fdr} reveals that the event in Proposition \ref{prop:compliant} holds with probability at least $1 - q/12$. As such, the bound on the $\fdr_k$ in Theorem \ref{thm:pbhq_fdr} can be strengthened to $(C_k + 1/12)q$.

In light of the above, we set $c'$ such that
\[
(C_k + 1/12)(1+c') = C_k + 0.1.
\]
Then, PrivateBHq at level $(1 + c')q$ controls the $\fdr_k$ at level $(C_k+0.1)q$ as ensured by Theorem \ref{thm:pbhq_fdr}.

It remains to prove that the claim of Theorem \ref{thm:power} also holds. To this end, we only need to show that $q'$ that is given in the statement of Theorem \ref{thm:power} is less than $(1+c')q$ for sufficiently large $m,n$. That is,
\[
(1+c')q > q' \equiv q \ep{\frac{24 \eta\sqrt{m'\log(1/\delta)} \log m}{\e}}.
\]
In both examples, $\eta = n^{-0.5 + o(1)}$ and $\log m = \log \mathtt{poly}(n) = n^{o(1)}$. Thus, we have
\[
\ep{\frac{24 \eta\sqrt{m'\log(1/\delta)} \log m}{\e}} \goto 0
\]
due to $m' \le n^{1-c}$. Therefore, in words, PrivateBHq at level $(1+c')q$ should be at least as powerful as the truncated BHq step-down procedure with probability tending to one.

\end{proof}

\begin{proof}[Proof of Proposition~\ref{prop:bonf}]

We first prove that PrivateBonf is $(\epsilon, \delta)$-differentially private. To this end, we start by observing that each $\pi_j$ corrupted by $\lap(\tilde\lambda)$ noise is $\epsilon'$-differentially private, where
\[
\epsilon' = \frac{\eta}{\tilde\lambda} = \frac{2\epsilon}{\sqrt{10 m \log(1/\delta)}}.
\]
Using the Advanced Composition Theorem, therefore, it suffices to show that
\[
\epsilon'\sqrt{2m \log(1/\delta)} + m \epsilon'(\ep{\epsilon'} - 1) \le \epsilon,
\]
Under the assumptions of Theorem~\ref{thm:main_pri}, we have
\[
\ep{\epsilon'} - 1 \le 1.034\epsilon'
\]
Thus, the proof would be completed once we show
\[
\epsilon'\sqrt{2m \log(1/\delta)} + 1.034 m \epsilon'^2 \le \epsilon,
\]
which is equivalent to
\[
\frac2{\sqrt{5}} + 1.034 \times \frac{2\epsilon}{5\log(1/\delta)} \le 1.
\]
This inequality can be easily verified.

Next, we turn to show the second statement. As with the proof of Theorem~\ref{thm:pbhq_fdr}, we only need to show that, with probability at least $1 - 0.1q$, all noisy counts $\pi_j + \lap(\tilde\lambda)$ with $p_j > q/m$ are above
\[
\log\frac{q}{m} - \frac{\eta\sqrt{10 m \log(1/\delta)}\log(5m/q)}{2 \epsilon}.
\]
This statement is implied if $m$ \iid~$\lap(\tilde\lambda)$ noise terms are all above $- \frac{\eta\sqrt{10 m \log(1/\delta)}\log(5m/q)}{2 \epsilon}$ with probability at least $1 - 0.1q$. This claim is true by invoking Lemma~\ref{lem:slack}.

%To show this procedure is Next, by applying the advanced composition theorem, 
%\red{TBD}
\end{proof}

%%% Local Variables:
%%% mode: latex
%%% TeX-master: "paper_annals"
%%% End:

\section{More Simulation Results}
\label{sec:more-simul-results}

\subsection{Comparions of $\fdr_k$-controlling procedures with others}
\label{sec:comp-set-numer}

In this section, we follow the setting of Figures~\ref{fig:fdp_eval} and \ref{fig:power_eval}, and compare the $\fdr_k$ and power of a step-up procedure \cite{kfdr,kfdrfollow}, PrivateBHq, and PrivateBonf. Specifically, we consider the case of $k = 3$, and use the critical values
\[
q_j = \left(\frac{q\max\{j, k\}}{m} \prod_{i=1}^{k-1} \frac{i}{m - \max\{j, k\} + i}\right)^{\frac1k}
\]
for $j = 1, \ldots, m$, where $k = 3$ (see Eqn.~(5.3) in \cite{kfdrfollow}). For simplicity, we refer to this procedure as BHq$_k$. Figures~\ref{fig:fdp_eval_k} and \ref{fig:power_eval_k} display the results.

\begin{figure}[!htp]
\centering
\begin{subfigure}[b]{0.45\textwidth}
\centering
\includegraphics[width=\textwidth]{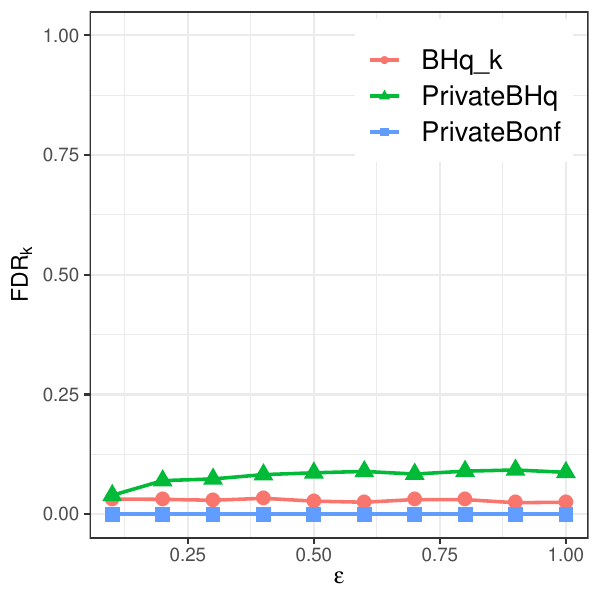}
\end{subfigure}
\begin{subfigure}[b]{0.45\textwidth}
\centering
\includegraphics[width=\textwidth]{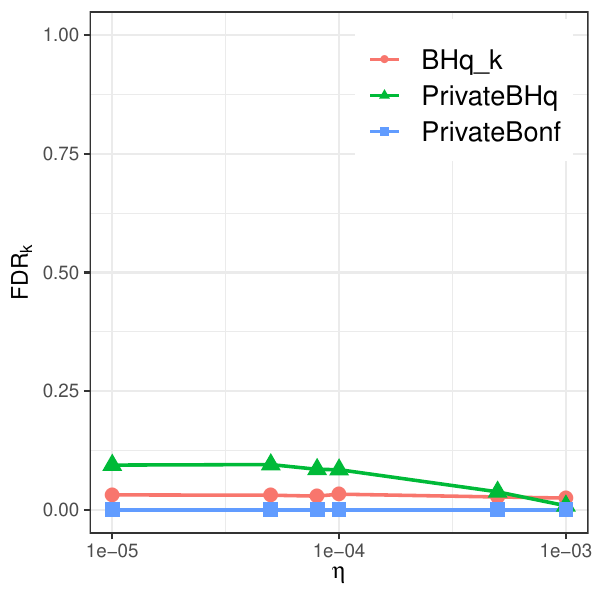}
\end{subfigure}\\
\centering
\begin{subfigure}[b]{0.45\textwidth}
\centering
\includegraphics[width=\textwidth]{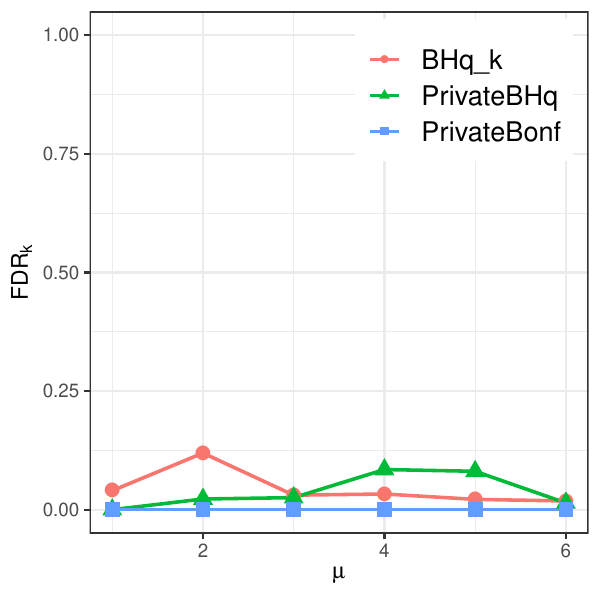}
%\caption{one-sided tests}
\end{subfigure}
\begin{subfigure}[b]{0.45\textwidth}
\centering
\includegraphics[width=\textwidth]{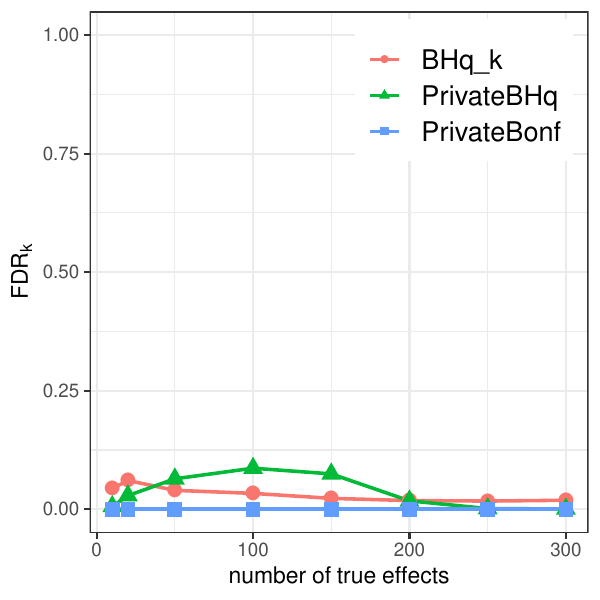}
%\caption{two-sided tests}
\end{subfigure}
\hfill
\caption{The FDR of BHq$_k$, PrivateBHq, and PrivateBonf, plotted against varying $\epsilon, \eta, \mu, m_1$, respectively, and averaged over 100 independent replicates.}
\label{fig:fdp_eval_k}
\end{figure}

\begin{figure}[!htp]
\centering
\begin{subfigure}[b]{0.45\textwidth}
\centering
\includegraphics[width=\textwidth]{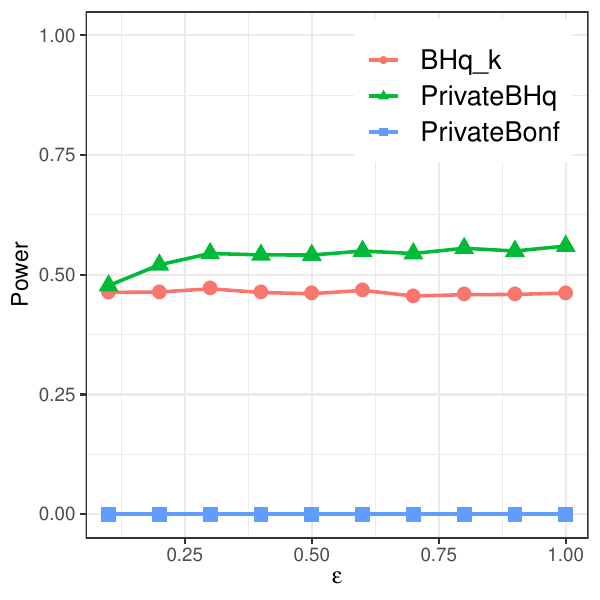}
\end{subfigure}
\begin{subfigure}[b]{0.45\textwidth}
\centering
\includegraphics[width=\textwidth]{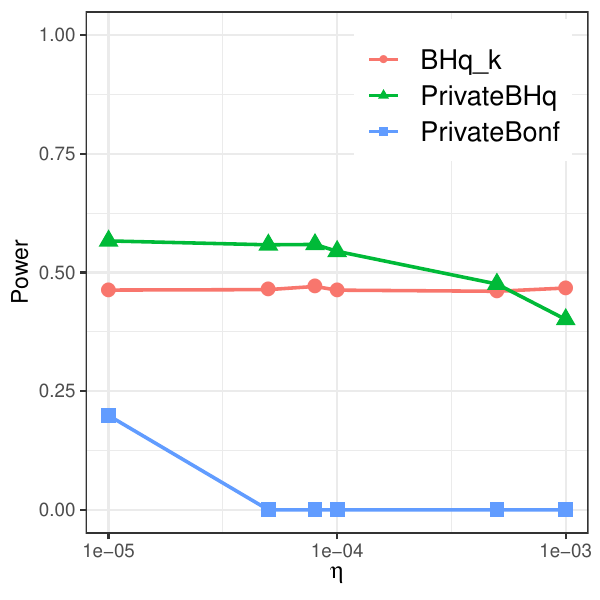}
\end{subfigure}\\
\centering
\begin{subfigure}[b]{0.45\textwidth}
\centering
\includegraphics[width=\textwidth]{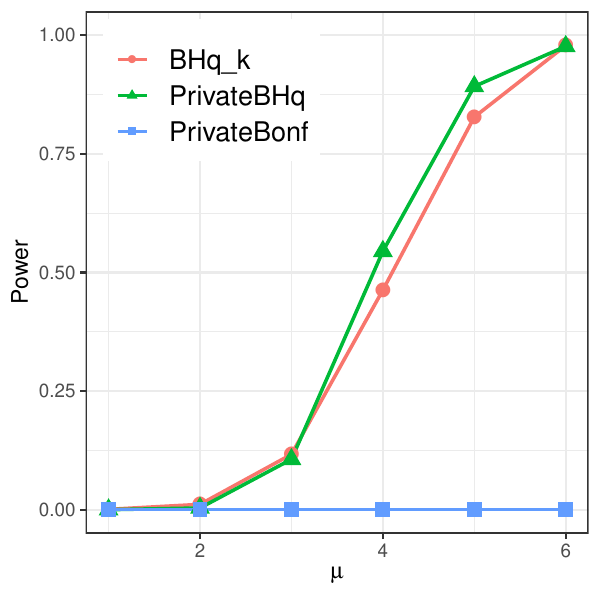}
%\caption{one-sided tests}
\end{subfigure}
\begin{subfigure}[b]{0.45\textwidth}
\centering
\includegraphics[width=\textwidth]{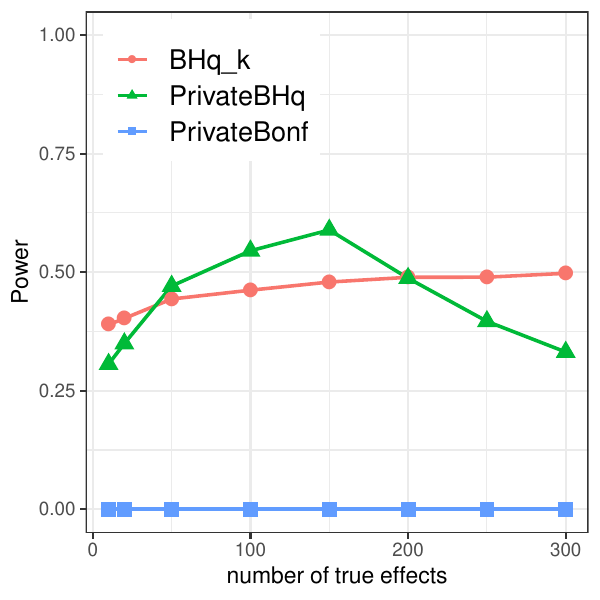}
%\caption{two-sided tests}
\end{subfigure}
\hfill
\caption{The power of BHq$_k$, PrivateBHq, and PrivateBonf, plotted against varying $\epsilon, \eta, \mu, m_1$, respectively, and averaged over 100 independent replicates.}
\label{fig:power_eval_k}
\end{figure}

\subsection{BHq under Negative Dependence}
\label{sec:numerical-results}

This section features three simulated examples with certain negative
dependence between the true null and false null test statistics. The
simulation results empirically show that the BHq step-up procedure
controls $\fdr_2$ and $\fdr_5$, and this is consistent with Theorem
\ref{thm:robustfdr}. Throughout, $\mathcal{N}_0$ with cardinality $m_0$ and $\mathcal{N}_1$ with cardinality $m_1 \equiv m - m_0$ denote the set of true null hypotheses and the set of false null hypotheses, respectively.

%%,which is the first in the literature that gives an affirmative answer to FDR control of the BHq step-up procedure in these three examples. 

\begin{example}[Multivariate Normal] \label{ex:normal}
Consider observing $X \sim \mathcal{N}(\mu, \Sigma)$. The covariance $\Sigma$ is constructed as follows: $\Sigma_{ii} = 1$ for all $1 \le i \le m$, $\Sigma_{ij} = 0$ if $i \ne j$ and both $i, j \in \mathcal{N}_0$ or $i, j \in \mathcal{N}_1$, and $\Sigma_{ij} = -1/\sqrt{m_0 m_1}$ if one of $i, j$ belongs to $\mathcal{N}_0$ and the other belongs to $\mathcal{N}_1$ (if this value is set to be smaller than $-1/\sqrt{m_0 m_1}$, the covariance $\Sigma$ is not positive semidefinite). The distribution of $X$ satisfies the IWN condition and, therefore, Theorem \ref{thm:robustfdr} guarantees FDR control of the BHq procedure used to test $\mu_i = 0$ against the one-sided alternative $\mu_i > 0$. In contrast, the results of \cite{prds} are not applicable because the PRDS property does not hold due to $-1/\sqrt{m_0 m_1} < 0$. Furthermore, Theorem \ref{thm:robustfdr} is still valid for testing against the two-sided alternatives $\mu_i \ne 0$. In general, the PRDS property is not satisfied for two-sided tests (see discussion in Section 3.1 of \cite{prds}).

Figure \ref{fig:normal} presents the empirical $\fdr, \fdr_2$, and $\fdr_5$ of the BHq procedure for both one-sided and two-sided alternatives in this example and, in addition, the bound $C_k \pi_0 q$ in Theorem \ref{thm:robustfdr} for $k = 2, 5$ in dashed lines. As predicted by Theorem \ref{thm:robustfdr}, the empirical $\fdr_2$ and $\fdr_5$ are indeed below their corresponding dashed lines. In fact, the empirical values are much below the bounds in Theorem \ref{thm:robustfdr}, which are derived by assuming a least favorable dependence structure between the nulls and non-nulls. This pattern is also observed in the following two plots. Moreover, these empirical error rates decrease eventually as the number of true effects $m_1$ increases, which reflects the presence of the true null proportion $\pi_0$ in the bound $C_k \pi_0 q$. Notably, this bound can be smaller than the nominal level $q$ if $m_1$ is sufficiently large.

\begin{figure}[!htp]
\centering
\begin{subfigure}[b]{0.45\textwidth}
\centering
\includegraphics[width=\textwidth]{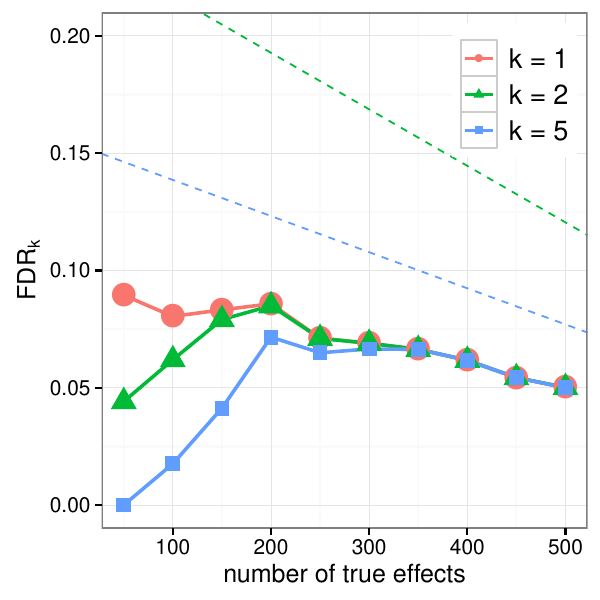}
\caption{one-sided tests}
\end{subfigure}
\begin{subfigure}[b]{0.45\textwidth}
\centering
\includegraphics[width=\textwidth]{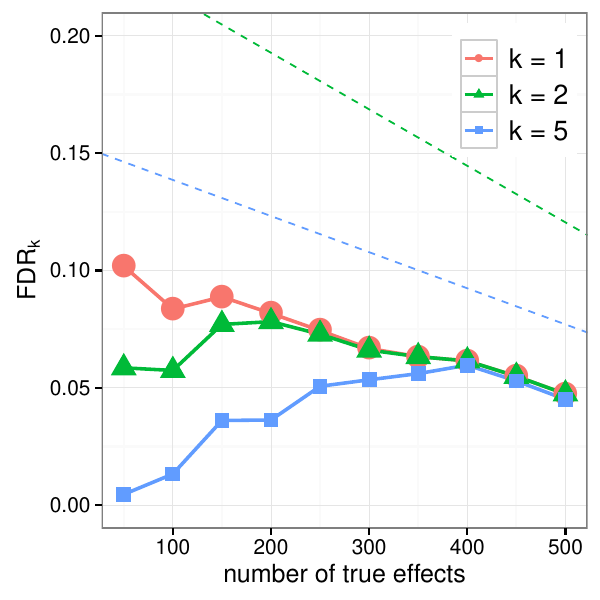}
\caption{two-sided tests}
\end{subfigure}
\hfill
\caption{$\fdr_k$ of the BHq for $k = 1, 2, 5$ in Example \ref{ex:normal}, with level $q = 0.1$. The $\fdr_1$ is just the usual FDR. We set $m$ to 1000, vary $m_1$ from 50 to 500, and let $\mu_i = 2$ for $1 \le i \le m_1$ and $\mu_i = 0$ otherwise. The covariance matrix $\Sigma$ has ones on the diagonal; $\Sigma_{ij} = \Sigma_{ji} = -1/\sqrt{m_0 m_1}$ for all $1 \le i \le m_1$ and $m_1 + 1 \le j \le m$; all the rest entries are zero. The results are averaged over 100 replicates. The upper and lower dashed lines denote the bounds $C_2 \pi_0 q$ and $C_5 \pi_0 q$, respectively.}
\label{fig:normal}
\end{figure}

\end{example}

\begin{example}[Multivariate $t$-Distribution with Different Denominators] \label{ex:student}
Consider observing \iid~vectors $X^{(1)}, \ldots, X^{(n)}$ from $\mathcal{N}(\mu, \Sigma)$, where both $\mu$ and $\Sigma$ are the same as the previous example. To test $\mu_i = 0$ against $\mu_i > 0$ or $\mu_i \ne 0$, we use the $t$-test statistics
\[
t_i = \frac{\sqrt{n}\bar X_i}{\sqrt{\frac1{n-1} \sum_{l=1}^n (X^{(l)}_i - \bar X_i)^2}},
\]
where $\bar X_i = (X^{(1)}_i + \cdots + X^{(n)}_i)/n$ for $i = 1, \ldots, m$. As earlier, Theorem \ref{thm:robustfdr} applies to this example, as opposed to the existing FDR literature, which fails to ensure FDR control of the BHq procedure in this example.

Numerical results for Example \ref{ex:student} are displayed in Figure \ref{fig:student}. The setup follows follows Example \ref{ex:normal}, with $n$ being set to 10. While the behavior of the BHq procedure in Figure \ref{fig:normal} basically remains the same in the present plot, we wish to point out that the effect of the true null proportion $\pi_0$ is more pronounced in the present simulation study and the three error rates coincide exactly once $m_1$ exceeds 100 as the BHq in this setting always rejects a substantial number of true nulls. The later shows the difference between the FDR and $\fdr_k$ is inconsequential in this example.

\begin{figure}[!htbp]
\centering
\begin{subfigure}[b]{0.45\textwidth}
\centering
\includegraphics[width=\textwidth]{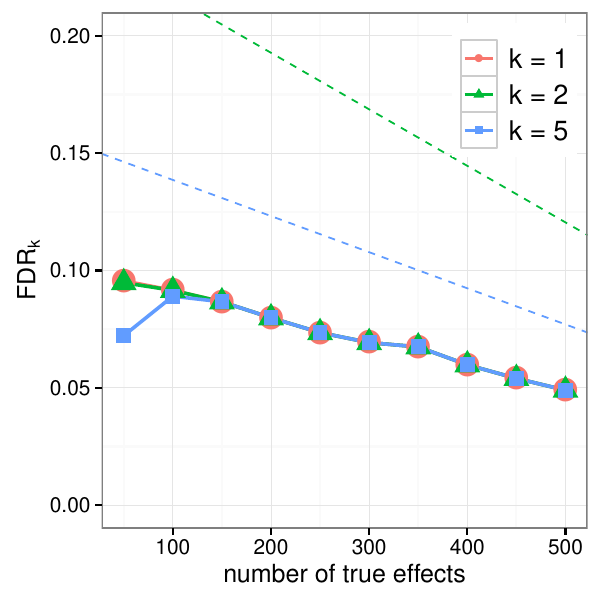}
\caption{one-sided tests}
\end{subfigure}
\begin{subfigure}[b]{0.45\textwidth}
\centering
\includegraphics[width=\textwidth]{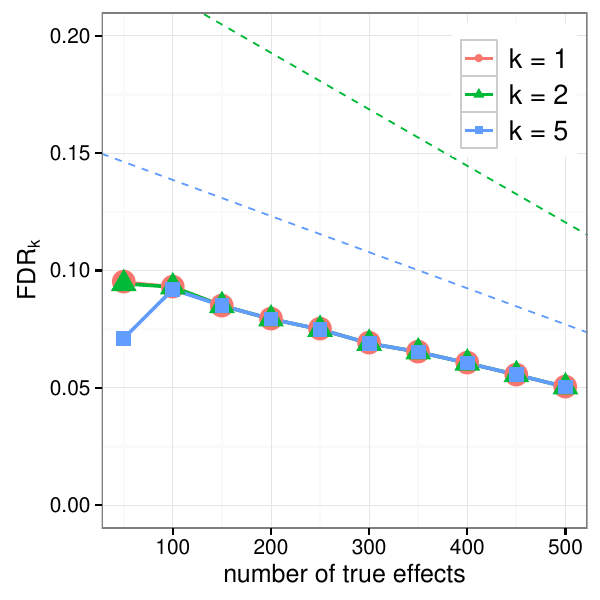}
\caption{two-sided tests}
\end{subfigure}
\hfill
\caption{$\fdr_k$ of the BHq for $k = 1, 2, 5$ in Example \ref{ex:student}, with level $q = 0.1$. The experimental setup is the same as Figure \ref{fig:normal}. The parameter $n$ is set to 10.}
\label{fig:student}
\end{figure}

\end{example}

\begin{example}[Multivariate Normal with Block-Diagonal Covariance]\label{ex:block}
Consider bivariate normal variables $X_i, \widetilde X_i$ with means $\mu_i = 0$ and $\widetilde\mu_i \ne 0$, respectively, for $i = 1, \ldots, m$. Let $\var X_i = \var \widetilde X_i = 1$ for all $i$ and the $m$ pairs $(X_i, \widetilde X_i)$ be jointly independent. Thus, the $2m$ normal variables exhibit a diagonal-block covariance matrix that is formed by $m$ $2 \times 2$ blocks on the diagonal. The correlation $\operatorname{corr}(X_i, \widetilde X_i)$ within every block varies from $-1$ to $-0.1$. Note that there are $m$ true nulls among the $2m$ hypotheses and, therefore, $\pi_0 = 0.5$. The IWN condition is satisfied because all true nulls are located in different blocks. Consequently, the BHq procedure maintains $\fdr_k$ control in this example by applying Theorem \ref{thm:robustfdr}, as opposed to existing
results in the literature, which to our knowledge are not capable of confirming the FDR control for this example. Moreover, the usual FDR control follows from Theorem \ref{thm:robustfdr} as a corollary, whose proof can be found in the appendix. As an appealing feature of this result, the dependence within each block can be arbitrary and even be different across blocks.
\begin{corollary}\label{cor:block-cov}
Fix $0 < q < 1$. Assume that $\{1 \le i \le m: \widetilde \mu_i \ge
c_1\}/ m \ge c_2$ for positive constants $c_1, c_2$ in Example
\ref{ex:block}. For both one-sided and two-sided alternatives, the BHq
procedure controls the usual FDR in an asymptotic sense. That is, as
$m \rightarrow \infty$, we get $\fdr \le (1 + o_m(1))q$.
\end{corollary}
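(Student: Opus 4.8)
The plan is to circumvent the $k=1$ obstruction in Theorem~\ref{thm:robustfdr} (where $C_1 = \infty$) by pairing the $\fdr^k$ bound of Theorem~\ref{thm:stronglyrobustfdr} with a lower bound on the number of rejections $R$ that holds with probability tending to one. First I would record that Example~\ref{ex:block} satisfies the IWN condition (the true nulls lie in distinct blocks, hence are independent) and that step-up BHq is BHq-compliant, so the sharper form $\fdr^k \le \pi_0 q + 2\sqrt{\pi_0 q/k}$ established in the proof of Theorem~\ref{thm:stronglyrobustfdr} applies for every $k$; here $\pi_0 = 1/2$. Then I would split the FDR according to whether $R \ge k$:
\[
\fdr = \E\Big[\tfrac{V}{R}; R \ge k\Big] + \E\Big[\tfrac{V}{\max\{R,1\}}; R < k\Big] \le \fdr^k + \P(R < k),
\]
using $V/\max\{R,1\} \le 1$ together with $V=0$ on $\{R=0\}$ to bound the second term by $\P(R<k)$.

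The heart of the argument is to show that $R$ grows linearly in $m$ with overwhelming probability, for which I would exhibit a single fixed threshold $t_0>0$ at which the empirical distribution of the $2m$ $p$-values already crosses the BHq rejection line. Let $F_1(t)$ denote the average CDF of the alternative $p$-values; the assumption that a fraction at least $c_2$ of the signals satisfy $\widetilde\mu_i \ge c_1 > 0$ gives $F_1(t) \ge c_2\,\Phi(c_1 - z_t)$ with $z_t = \Phi^{-1}(1-t)$ in the one-sided case (and the analogous bound with $z_{t/2}$ in the two-sided case, where the null $p$-values remain uniform). A Gaussian tail comparison shows $F_1(t)/t \to \infty$ as $t \to 0^+$, since $\Phi(c_1 - z_t)/t$ blows up like $\exp(c_1 z_t)$; hence I can fix $t_0$ small enough that the population average CDF $\bar G(t_0) = \tfrac12(t_0 + F_1(t_0))$ strictly exceeds $t_0/q$. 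Because the $m$ blocks are independent and each contributes a count in $\{0,1,2\}$ to $N(t_0) := \#\{i : p_i \le t_0\}$, Hoeffding's inequality gives $N(t_0) > 2mt_0/q$ with probability $1 - e^{-\Omega(m)}$. On that event, taking $j = N(t_0)$ yields $p_{(j)} \le t_0 < qj/(2m)$, so the step-up procedure rejects at least $j = N(t_0) > 2mt_0/q$ hypotheses; that is, $R = \Omega(m)$ with probability $1-e^{-\Omega(m)}$.

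Finally I would choose $k = k_m \to \infty$ with $k_m = o(m)$, say $k_m = \lfloor\sqrt m\rfloor$. Then eventually $k_m < 2mt_0/q$, so $\P(R < k_m) \le \P(R \le 2mt_0/q) \to 0$ by the concentration bound, while $\fdr^{k_m} \le \pi_0 q + 2\sqrt{\pi_0 q/k_m} \to \pi_0 q = q/2$. Combining the two estimates with the decomposition above gives $\fdr \le q/2 + o(1) \le (1 + o_m(1))q$, which proves the corollary for both one-sided and two-sided tests, the only difference being the use of $z_{t/2}$ in place of $z_t$ in the tail estimate. The main obstacle is the linear lower bound on $R$: one must verify the empirical-CDF-versus-BHq-line crossing at a fixed positive threshold (relying on the fact that fixed-size Gaussian signals overwhelm the uniform nulls at small thresholds) and control it uniformly via block independence, whereas the FDR decomposition and the invocation of Theorem~\ref{thm:stronglyrobustfdr} are routine.
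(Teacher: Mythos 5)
Your proof is correct, but it travels a genuinely different route from the paper's. The paper controls the usual FDR through $\fdr_k$ (Theorem \ref{thm:robustfdr}): it first shows $R \ge \lfloor\sqrt{m}\rfloor$ with probability tending to one by an extreme-value computation on the order statistics of the signal Gaussians (comparing $W_{(\lfloor\sqrt m\rfloor)} = \sqrt{2\log(\#A/\lfloor\sqrt m\rfloor)} + o_{\P}(1)$ against $\Phi^{-1}(q\lfloor\sqrt m\rfloor/(2m))$), deduces $|\fdp-\fdp_k| \le (k-1)/\max\{R,1\} = o_{\P}(1)$, hence $\fdr \le C_k \pi_0 q + o_m(1)$, and finally sends $k\to\infty$ so that $C_k \to 1$. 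You instead invoke the $\fdr^k$ bound of Theorem \ref{thm:stronglyrobustfdr} in its sharper form $\pi_0 q + 2\sqrt{\pi_0 q/k}$, use the cleaner decomposition $\fdr \le \fdr^{k} + \P(R<k)$, and establish the stronger lower bound $R=\Omega(m)$ with probability $1-\ep{-\Omega(m)}$ via a fixed-threshold crossing of the empirical $p$-value CDF plus Hoeffding over the independent blocks; taking $k_m=\lfloor\sqrt m\rfloor$ then finishes. Each step of yours checks out: the Gaussian tail ratio $\Phi(c_1-z_t)/\Phi(-z_t)$ does diverge as $t\to 0^+$, the per-block contributions to $N(t_0)$ are bounded and independent, and $p_{(N(t_0))}\le t_0 \le qN(t_0)/(2m)$ does force the step-up rule to reject at least $N(t_0)$ hypotheses. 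What your approach buys is a quantitatively stronger conclusion ($\fdr \le \pi_0 q + o(1) = q/2 + o(1)$ rather than $q+o(1)$), exponential rather than merely vanishing failure probability for the rejection count, and no reliance on extreme-value asymptotics; what the paper's approach buys is that it works even when $R$ grows only polylogarithmically or like $\sqrt m$ rather than linearly, since it only needs $R\to\infty$ in probability, and it exercises the $C_k\to 1$ machinery of Proposition \ref{prop:k_big} that the paper develops anyway.
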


The numerical results are summarized in Figure \ref{fig:block}. Note that the three FDR variants coincide through the range of within-block correlations. Interestingly, the bound corresponding to $k = 5$ (the lower dashed line) is below the nominal level $q = 0.1$.

\begin{figure}[!htbp]
\centering
\begin{subfigure}[b]{0.45\textwidth}
\centering
\includegraphics[width=\textwidth]{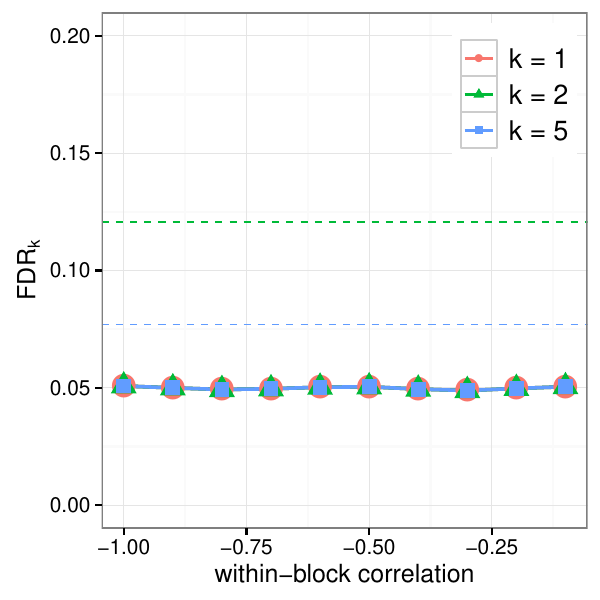}
\caption{one-sided tests}
\end{subfigure}
\begin{subfigure}[b]{0.45\textwidth}
\centering
\includegraphics[width=\textwidth]{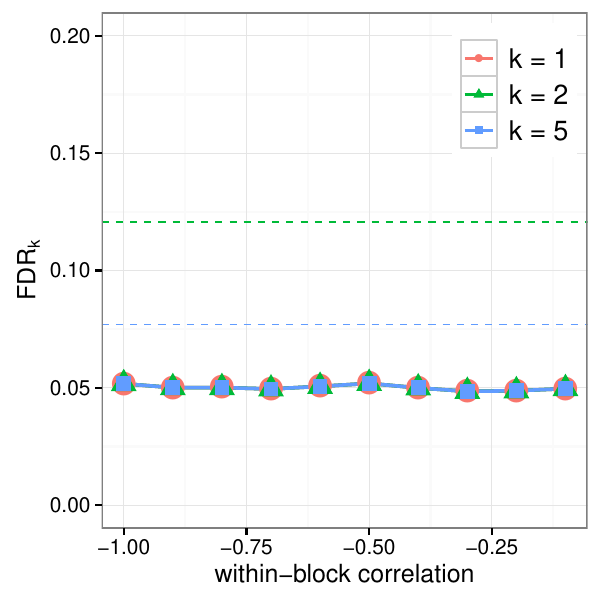}
\caption{two-sided tests}
\end{subfigure}
\hfill
\caption{$\fdr_k$ of the BHq for $k = 1, 2, 5$ in Example \ref{ex:block}, with level $q = 0.1$. Here, $m$ is set to 5000, $\widetilde \mu_i$ is set to $1.5$ for all $i$. Note that the true null proportion $\pi_0 = 0.5$. All points represent the average of 100 independent runs. The correlation between $X_i$ and $\widetilde X_i$ is set to be the same across all $i$, varying from $-1$ to $-0.1$.}
\label{fig:block}
\end{figure}

\end{example}

\begin{proof}[Proof of Corollary \ref{cor:block-cov}]
The proof idea is to apply Theorem \ref{thm:robustfdr} or Corollary \ref{cor: bhq_app} and recognize the number of rejections $R$ in Example \ref{ex:block} tends to infinity. We only consider the case of one-sided alternatives and the proof of the two-sided alternatives case is very similar.

Assume for the moment that $R \goto \infty$ with probability tending to one as $m \goto \infty$. Then, we have
\[
\begin{aligned}
|\fdp - \fdp_k| &= \frac{V \bm{1}_{V < k}}{\max\{R, 1\}} \\
& \le \frac{k - 1}{\max\{R, 1\}} \\
&= o_{\P}(1).
\end{aligned}
\]
Thus, from Theorem \ref{thm:robustfdr}, we get
\begin{equation}\label{eq:fdr_k_large}
\begin{aligned}
\fdr &= \fdr_k + o_m(1) \\
&\le C_k \pi_0 q + o_m(1) \\
&\le C_k q + o_m(1).
\end{aligned}
\end{equation}
Letting $k \goto \infty$, one gets $C_k \goto 1$. Thus, from \eqref{eq:fdr_k_large} it follows that
\[
\fdr \le q + o_m(1).
\]

Now, we aim to complete the proof by showing $R \goto \infty$ in probability. In fact, it will be shown that $\P(R \ge \lfloor \sqrt{m} \rceil) \goto 1$. By the construction of the step-up procedure, $R \ge \lfloor \sqrt{m} \rfloor$ if
\begin{equation}\nonumber
\#\left\{ i: \Phi(-X_i) \le \frac{q \lfloor \sqrt{m} \rfloor }{2m} \right\} + \#\left\{ i: \Phi(-\widetilde X_i) \le \frac{q \lfloor \sqrt{m} \rfloor }{2m} \right\} \ge \lfloor \sqrt{m} \rfloor,
\end{equation}
which is implied by
\begin{equation}\label{eq:su_suff}
\#\left\{ i: \Phi(-\widetilde X_i) \le \frac{q \lfloor \sqrt{m} \rfloor }{2m} \right\} \ge \lfloor \sqrt{m} \rfloor.
\end{equation}
Now, we aim to show \eqref{eq:su_suff} holds with probability tending to one. Denote by $A = \{i: \widetilde\mu_i \ge c_1\}$, which, by assumption, satisfies $\#A \ge c_2 m$. Consider $W_i := \widetilde X_i - \widetilde\mu_i \sim \mathcal{N}(0, 1)$ for $i \in A$ and let $W_{(1)} \ge \cdots \ge W_{(\#A)}$ be the order statistics. Note that \eqref{eq:su_suff} simply follows from
\[
\Phi \left( -c_1 - W_{(\lfloor \sqrt{m} \rfloor)} \right) \le \frac{q \lfloor \sqrt{m} \rfloor}{2m},
\]
which is equivalent to
\begin{equation}\label{eq:c_1_equiv}
-c_1 - W_{(\lfloor \sqrt{m} \rfloor)} \le \Phi^{-1} \left( \frac{q \lfloor \sqrt{m} \rfloor}{2m} \right).
\end{equation}
To prove \eqref{eq:c_1_equiv}, we make two observations:
\begin{align}
&\Phi^{-1} \left( \frac{q \lfloor \sqrt{m} \rfloor}{2m} \right) = -\sqrt{2\log\frac{2m}{q \lfloor \sqrt{m} \rfloor}} + o(1)\label{eq:normal_st_quan}\\
&W_{(\lfloor \sqrt{m} \rfloor)} = \sqrt{2\log\frac{\#A}{\lfloor \sqrt{m} \rfloor}} + o_{\P}(1)\label{eq:order_quan}
\end{align}
as both $q \lfloor \sqrt{m} \rfloor/(2m)$ tends to zero and $\#A/\lfloor \sqrt{m} \rfloor$ diverges to infinity. Above, \eqref{eq:normal_st_quan} is a standard result and a proof of \eqref{eq:order_quan} can be found in Chapter 2 of \cite{de2007extreme}. Hence, it suffices to show
\[
-\frac{c_1}{2} - \sqrt{2\log\frac{c_2 m}{\lfloor \sqrt{m} \rfloor}} \le -\sqrt{2\log\frac{2m}{q \lfloor \sqrt{m} \rfloor}}
\]
for sufficiently large $m$, which is true since
\[
\begin{aligned}
\sqrt{2\log\frac{c_2 m}{\lfloor \sqrt{m} \rfloor}} -\sqrt{2\log\frac{2m}{q \lfloor \sqrt{m} \rfloor}} &= \sqrt{\log m + 2\log c_2 + o(1)} -\sqrt{\log m + 2\log(2/q) + o(1)}\\
&= \sqrt{\log m} + o(1) -\sqrt{\log m} - o(1)\\
& = o(1).
\end{aligned}
\]

\end{proof}

\end{document}